\numberwithin{equation}{section}
\newtheorem{thm}{Theorem}[section]
\newtheorem{cor}[thm]{Corollary}
\newtheorem{lem}[thm]{Lemma}
\newtheorem{prop}[thm]{Proposition}
\theoremstyle{definition}
\newtheorem{defi}[thm]{Definition}
\newtheorem{defprop}[thm]{Definition and Proposition}
\newtheorem{notation}[thm]{Notation}
\newtheorem{remark}[thm]{Remark}
\newtheorem{example}[thm]{Example}
\newtheorem{construction}[thm]{Construction}
\newcommand{\Z}{\mathbb{Z}}
\DeclareMathOperator{\rank}{rank}
\DeclareMathOperator{\cok}{cok}
\renewcommand{\phi}{\varphi}
\DeclareMathOperator{\Ima}{Im}
\DeclareMathOperator{\End}{End}
\DeclareMathOperator{\Hom}{Hom}
\DeclareMathOperator{\pd}{pd}
\DeclareMathOperator{\dep}{depth}
\DeclareMathOperator{\syz}{syz}
\newcommand{\MatFac}[3]{\textup{MF}_{#1}^{#2}(#3)}
\newcommand{\MCM}{\textup{MCM}}
\newcommand{\mc}{\mathcal}
\newcommand{\mf}[1]{\mathfrak{#1}}
\newcommand{\rvect}[1]{\begin{bmatrix} #1 \end{bmatrix}}
\newcommand*\colvec[1]{
        \global\colveccount#1
        \begin{pmatrix}
        \colvecnext
}
\def\colvecnext#1{
        #1
        \global\advance\colveccount-1
        \ifnum\colveccount>0
                \\
                \expandafter\colvecnext
        \else
                \end{pmatrix}
        \fi
}
\title{Matrix Factorizations with more than two factors}
\author{Tim Tribone}
\begin{document}
\maketitle

\begin{abstract}
Given an element $f$ in a regular local ring, we study matrix factorizations of $f$ with $d \ge 2$ factors, that is, we study tuples of square matrices $(\phi_1,\phi_2,\dots,\phi_d)$ such that their product is $f$ times an identity matrix of the appropriate size. Several well known properties of matrix factorizations with 2 factors extend to the case of arbitrarily many factors. For instance, we show that the stable category of matrix factorizations with $d\ge 2$ factors is naturally triangulated and we give explicit formula for the relevant suspension functor. We also extend results of Kn\"orrer and Solberg which identify the category of matrix factorizations with the full subcategory of maximal Cohen-Macaulay modules over a certain non-commutative algebra $\Gamma$. As a consequence of our findings, we observe that the ring $\Gamma$ behaves, homologically, like a ``non-commutative hypersurface ring'' in the sense that every finitely generated module over $\Gamma$ has an eventually $2$-periodic projective resolution.

\end{abstract}

\thispagestyle{empty} 

\section*{Introduction}

Let $S$ be a regular local ring and $f$ a non-zero non-unit in $S$. A matrix factorization of $f$ is a pair of $n\times n$ matrices $(\phi,\psi)$ such that $\phi\psi = f\cdot I_n$, where $I_n$ is the identity matrix of size $n$. The correspondence given by Eisenbud \cite[Corollary 6.3]{eisenbud_homological_1980} between matrix factorizations and maximal Cohen-Macaulay modules has made these objects an important tool for studying hypersurface rings. We consider a natural generalization in which the factorizations have two or more factors. In other words, we consider tuples of $n\times n$ matrices $(\phi_1,\phi_2,\dots,\phi_d)$, for some $d \ge 2$, such that $\phi_1\phi_2\cdots\phi_d = f\cdot I_n$.

Several authors, including \cite{childs_linearizing_1978,backelin_matrix_1988,herzog_linear_1991, blaser_ulrich_2017} and also \cite{bergman_can_2006,buchweitz_factoring_2007}, have studied the existence of matrix factorizations with more than 2 factors, though, both sets of authors study special cases of the definition we present. The first set of authors focus their attention mainly on matrices which have only linear entries; factorizations of this type correspond to \textit{Ulrich modules} over the hypersurface defined by $f$. The second set of authors tackle a different question. Specifically, Bergman asked in \cite{bergman_can_2006} whether the adjoint (the matrix of cofactors) of the generic $n\times n$ matrix factors into a product of two non-invertible $n\times n$ matrices.

Our approach is to study the category of matrix factorizations of $f$ with $d\ge 2$ factors. We denote this category by $\MatFac{S}{d}{f}$. In Section \ref{section:Frobenius} we examine the basic structure of $\MatFac{S}{d}{f}$. We show that $\MatFac{S}{d}{f}$ is a Frobenius category and therefore its stable category is naturally triangulated. In the case $d=2$, the stable category we present coincides with the one originally studied by Eisenbud. In particular, it is equivalent to the stable category of maximal Cohen-Macaulay $R=S/(f)$-modules. In Sections \ref{section:KRS} and \ref{section:d_branched_cover} we extend results of Kn\"orrer and Solberg which give module theoretic descriptions of $\MatFac{S}{d}{f}$ for all $d \ge 2$. Specifically, Theorem \ref{thm:R_hash_sigma_equiv_MFd} extends \cite[Proposition 2.1]{knorrer_cohen-macaulay_1987} by identifying the category of matrix factorizations with $d \ge 2$ factors with the category of maximal Cohen-Macaulay modules over the skew group algebra of the $d$-fold branched cover of $R$. The equivalence of categories given by Kn\"orrer, which we generalize, is an important initial step in the classification of simple hypersurface singularities. In Section \ref{section:OmegaMF}, we investigate the suspension functor on the stable category of matrix factorizations with $d\ge 2$ factors. As a consequence of our results, we observe that the skew group algebra mentioned above behaves like a ``non-commutative hypersurface ring'' in the sense that it is Iwanaga-Gorenstein and that any of its finitely generated modules have an eventually periodic resolution of period at most 2. Throughout, and specifically in Section \ref{section:examples}, we give examples with the goal of illustrating the differences between the categories $\MatFac{S}{2}{f}$ and $\MatFac{S}{d}{f}$ for $d > 2$.

\section{Definitions and Background}\label{section:definition_background}

In this section we collect the definitions, notation, and conventions that we will use throughout.

\begin{defi}\label{defi:MF}
Let $S$ be a regular local ring, $f$ a non-zero non-unit in $S$, and $d\ge 2$ an integer. A \textit{matrix factorization of f with $d$ factors} is a $d$-tuple of homomorphisms between finitely generated free $S$-modules of the same rank, $(\phi_1:F_2\to F_1,\phi_2:F_3\to F_2,\dots ,\phi_d:F_1\to F_d$), such that
\[\phi_1\phi_2\cdots\phi_d = f\cdot 1_{F_1}.\]
Depending on the context, we may omit the free $S$-modules in the notation and simply write $(\phi_1,\phi_2,\dots,\phi_d)$. If the free $S$-modules $F_1,\dots,F_d$ are of rank $n$, we say $(\phi_1,\phi_2,\dots,\phi_d)$ is of size $n$. 
\end{defi}

As we will see, there is an inherent cyclic nature to matrix factorizations with $d$ factors. It will therefore be convenient to adopt the following notation conventions.

\begin{notation} The letter $d$ will always be an integer indicating the number of factors in a matrix factorization. When $d$ is clear from context, all indices are taken modulo $d$ unless otherwise specified. More specifically, let $i\neq j \in \Z_d$ and let $A_1,A_2,\dots,A_d$ be symbols indexed over $\Z_d$. Let $\tilde i$ and $\tilde j$ be integer representatives of $i,j$ within the range $0 < \tilde i,\tilde j \leq d$. The notation $A_iA_{i+1}\cdots A_{j}$ will be taken to mean
\[\begin{cases}
A_iA_{i+1}\cdots A_{j-1}A_j & \text{ if } \tilde i \leq \tilde j\\
A_iA_{i+1}\cdots A_dA_1\cdots A_{j-1}A_j &\text{ if } \tilde i \ge \tilde j
\end{cases}.\]
We follow a similar convention for indexing a decreasing list of symbols over $\Z_d$. 
\end{notation}

A non-zero finitely generated module $M$ over a local ring $A$ is called \textit{maximal Cohen-Macaulay} (MCM) if $\dep_A(M) = \dim A$ (the Krull dimension of $A$). Our first observation is that matrix factorizations of $f$ with $d$ factors encode MCM modules over the \textit{hypersurface ring} $R = S/(f)$, and that Definition \ref{defi:MF} is more symmetric than it seems.

\begin{lem}\label{thm:shifts_of_X}
Let $S$ be a regular local ring and $f$ a non-zero non-unit in $S$. Let $(\phi_1:F_2\to F_1,\phi_2:F_3 \to F_2,\dots,\phi_d:F_1 \to F_d)$ be a matrix factorization of $f$ with $d\ge 2$ factors. For any $k \in \Z_d$,
\begin{enumerate}[label = (\roman*)]
    \item \label{thm:shifts_of_X:1} \(\phi_k\phi_{k+1}\cdots\phi_{k-1} = f \cdot 1_{F_k}\), and
    \item \label{thm:shifts_of_X:2} if $\cok\phi_k \neq 0$, then $\cok\phi_k$ is an MCM $R$-module.
\end{enumerate}
\end{lem}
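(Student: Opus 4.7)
The plan is to first establish the cyclic symmetry in \ref{thm:shifts_of_X:1} by an injectivity argument based on determinants, then bootstrap from \ref{thm:shifts_of_X:1} to show $\cok\phi_k$ is an $R$-module and apply Auslander--Buchsbaum to conclude \ref{thm:shifts_of_X:2}.

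For \ref{thm:shifts_of_X:1}, I would prove the single rotation $\phi_2\phi_3\cdots\phi_d\phi_1 = f\cdot 1_{F_2}$ and then iterate: the shifted tuple $(\phi_2,\phi_3,\dots,\phi_d,\phi_1)$ satisfies the defining equation of a matrix factorization by the case just proved, so the identical argument applied to it yields the next rotation, and so on. To establish the single rotation, multiply $\phi_1\phi_2\cdots\phi_d = f\cdot 1_{F_1}$ on the right by $\phi_1$ to get $\phi_1(\phi_2\cdots\phi_d\phi_1) = f\phi_1 = \phi_1(f\cdot 1_{F_2})$. Taking determinants in the defining equation gives $\det(\phi_1)\det(\phi_2\cdots\phi_d) = f^n$, and since $S$ is a regular local ring (hence a domain) and $f\neq 0$, we get $\det(\phi_1)\neq 0$; thus $\phi_1$ is injective as an $S$-linear map between free modules. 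Left-cancelling $\phi_1$ produces $\phi_2\cdots\phi_d\phi_1 = f\cdot 1_{F_2}$.

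For \ref{thm:shifts_of_X:2}, fix $k$ and use \ref{thm:shifts_of_X:1} to write $\phi_k(\phi_{k+1}\cdots\phi_{k-1}) = f\cdot 1_{F_k}$, which implies $f\cdot F_k \subseteq \Ima\phi_k$. Thus $\cok\phi_k$ is annihilated by $f$ and so is naturally an $R$-module. The same determinant argument shows $\phi_k$ is injective, giving a short exact sequence $0 \to F_{k+1} \xrightarrow{\phi_k} F_k \to \cok\phi_k \to 0$ of finitely generated $S$-modules. This is an $S$-free resolution of length $1$, so $\pd_S(\cok\phi_k) = 1$ whenever $\cok\phi_k\neq 0$. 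Auslander--Buchsbaum then gives $\dep_S(\cok\phi_k) = \dim S - 1$, and since depth of a $B$-module $M$ computed over a local ring $A$ surjecting onto $B$ agrees with depth over $B$, we conclude $\dep_R(\cok\phi_k) = \dim S - 1 = \dim R$, so $\cok\phi_k$ is MCM over $R$.

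The only substantive obstacle is the determinant-based injectivity of $\phi_k$; without it we could not cancel on the left in \ref{thm:shifts_of_X:1}, nor assert exactness on the left of the resolution used in \ref{thm:shifts_of_X:2}. Regularity of $S$ enters exactly here, through its being a domain so that $\det\phi_k$, a divisor of $f^n$, is nonzero.
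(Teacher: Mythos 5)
Your proof is correct, and for part \ref{thm:shifts_of_X:1} it takes a genuinely different route from the paper. The paper argues by induction on $d$: the base case $d=2$ is the classical observation that $\phi\psi=f\cdot 1$ implies $\psi\phi=f\cdot 1$, and for $d>2$ it groups $\phi_k\phi_{k+1}$ into a single map to produce a $(d-1)$-factorization and invokes the inductive hypothesis. You instead prove a single cyclic rotation directly --- right-multiply $\phi_1\phi_2\cdots\phi_d=f\cdot 1_{F_1}$ by $\phi_1$ to get $\phi_1(\phi_2\cdots\phi_d\phi_1)=\phi_1(f\cdot 1_{F_2})$, then left-cancel $\phi_1$ --- and then iterate. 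Both are valid; your version avoids the inductive reduction entirely, at the cost of hiding a small implicit induction in the word ``iterate.'' A point in your favor: your determinant argument makes explicit the injectivity of $\phi_k$, which the paper asserts without comment. (From $\phi\psi=f\cdot 1_F$ alone it is immediate that $\psi$ is injective, but injectivity of $\phi$ genuinely needs the equal-rank hypothesis, e.g.\ via $\det\phi\cdot\det\psi=f^n\neq 0$ or by passing to $\operatorname{Frac}(S)$.) For part \ref{thm:shifts_of_X:2} your argument matches the paper's --- the short exact sequence over $S$ plus Auslander--Buchsbaum --- and you are a bit more careful in two minor respects: you observe that $\pd_S(\cok\phi_k)$ is exactly $1$ (not merely $\leq 1$) when $\cok\phi_k\neq 0$, since a nonzero $f$-torsion module over a local ring cannot be free, and you note that $S$-depth and $R$-depth of an $R$-module agree.
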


\begin{proof} \begin{enumerate}[label = (\roman*)]
    \item We proceed by induction on $d\ge 2$. For the case $d=2$, we simply need to show that $\phi\psi = f\cdot 1_F$ implies $\psi\phi=f\cdot 1_G$. This fact is well known but we include the argument here for clarity. Suppose $(\phi:G \to F,\psi:F \to G)$ is a matrix factorization with 2 factors. Since $f$ is a non-zero element in the domain $S$, it follows that both $\phi$ and $\psi$ are injective. Canceling $\phi$ on the left of the equation $\phi\psi\phi = f \cdot \phi = \phi \cdot f$, we find $\psi\phi = f \cdot 1_G$.
    
    Now, assume $d > 2$ and that the statement holds for matrix factorizations with less than $d$ factors. Let $k \in \Z_d$ and notice that, by viewing the composition $\phi_k\phi_{k+1}: F_{k+2}\to F_k$ as a single homomorphism, the $(d-1)$-tuple \[(\phi_1,\phi_2,\dots,\phi_{k-1},\phi_k\phi_{k+1},\phi_{k+2},\dots,\phi_d)\] is a matrix factorization of $f$ with $d-1$ factors. By induction, it follows that $\phi_k\phi_{k+1}\cdots\phi_{k-1} = f \cdot 1_{F_{k}}$.
    \item Let $k \in \Z_d$. By \ref{thm:shifts_of_X:1}, we have that $\phi_k\phi_{k+1}\cdots\phi_{k-1}=f \cdot 1_{F_k}$. In particular, $f \cdot \cok\phi_k = 0$, that is, $\cok\phi_k$ is an $R$-module. Also, as in \ref{thm:shifts_of_X:1}, the homomorphism $\phi_k$ is injective since $f \in S$ is non-zero. Thus, we have a short exact sequence
    \[\begin{tikzcd}
    0 \rar &F_{k+1} \rar{\phi_k} &F_k \rar &\cok\phi_k \rar &0,
    \end{tikzcd}\] which implies that $\pd_S(\cok\phi_k) \leq 1$. By the Auslander-Buchsbaum formula, we have that
    \[\dep(\cok\phi_k) = \dim(S) - \pd_S(\cok\phi_k) \ge \dim(S) - 1 = \dim(R).\] That is, $\cok\phi_k$ is an MCM $R$-module.
\end{enumerate}
\end{proof}

\begin{defi} Let $S$ be a regular local ring and $f$ a non-zero non-unit in $S$. Let $X = (\phi_1:F_2\to F_1,\dots,\phi_d: F_1\to F_d)$ and $X' = (\phi_1':F_2'\to F_1',\dots,\phi_d': F_1'\to F_d')$ be matrix factorizations of $f$.
\begin{enumerate} [label = (\roman*)]
\item  A \textit{morphism of matrix factorizations} between $X$ and $X'$ is a $d$-tuple of $S$-module homomorphisms, $\alpha = (\alpha_1,\alpha_2,\dots,\alpha_d)$, making each square of the following diagram commute:
\[\begin{tikzcd}
F_1 \dar{\alpha_1} \rar{\phi_d} &F_d \dar{\alpha_d} \rar{\phi_{d-1}} &\cdots \rar{\phi_2}  &F_2 \dar{\alpha_2}\rar{\phi_1} &F_1 \dar{\alpha_1}\\
F_1' \rar{\phi_d'} &F_d' \rar{\phi_{d-1}'} &\cdots \rar{\phi_2'} &F_2' \rar{\phi_1'} &F_1'.
\end{tikzcd}\]
Composition of morphisms is defined component-wise, that is, if $\alpha = (\alpha_1,\dots,\alpha_d): X \to X''$ and $\beta =(\beta_1,\dots,\beta_d): X' \to X$ are morphisms of matrix factorizations, then $\alpha\circ \beta= (\alpha_1\beta_1,\alpha_2\beta_2,\dots,\alpha_d\beta_d): X' \to X''$. The matrix factorizations $X$ and $X'$ are isomorphic if there exists a morphism $\alpha = (\alpha_1,\dots,\alpha_d):X \to X'$ such that $\alpha_k$ is an isomorphism for each $k\in \Z_d$.

\item Let $\MatFac{S}{d}{f}$ denote the category of matrix factorizations of $f$ with $d$ factors. The additive structure on $\MatFac{S}{d}{f}$ is given by the direct sum of $X$ and $X'$
\[\scalebox{.97}{\(X \oplus X' \coloneqq \left(\phi_1\oplus \phi_1':F_2\oplus F_2' \to F_1\oplus F_1',\dots,\phi_d\oplus\phi_d': F_1\oplus F_1' \to F_d\oplus F_d'\right).\)}\]

\item Motivated by Lemma \ref{thm:shifts_of_X}, we define functors $T^j:\MatFac{S}{d}{f} \to \MatFac{S}{d}{f}$, $j \in \Z_d$, by
\[T^j(\phi_1,\phi_2,\dots,\phi_d) = (\phi_{j+1}, \phi_{j+2},\dots,\phi_{j-1},\phi_j)\] and
\[T^j(\alpha_1,\alpha_2,\dots,\alpha_d) = (\alpha_{j+1},\alpha_{j+2},\dots,\alpha_{j-1},\alpha_j)\] for any $(\alpha_1,\alpha_2,\dots,\alpha_d) \in \Hom_{\MatFac{S}{d}{f}}(X,X')$. We refer to $T=T^1$ as the \textit{shift functor} on $\MatFac{S}{d}{f}$.
\end{enumerate}
\end{defi}

\begin{defi}
Let $S$ be a regular local ring with maximal ideal $\mf n$ and let $f$ be a non-zero non-unit in $S$. Set $R=S/(f)$ and fix $d\ge 2$. 
\begin{enumerate}[label = (\roman*)]
    \item Let $\MCM(R)$ denote the category of \text{maximal Cohen-Macaulay} $R$-modules.
    \item An $R$-module $M$ is \textit{stable} if it has no direct summands isomorphic to $R$.
    \item For an $R$-module $M$, let $\syz_R^1(M)$ denote the \textit{reduced} first syzygy of $M$.
    \item A matrix factorization $X=(\phi_1,\phi_2,\dots,\phi_d)$ is called \textit{stable} if $\cok\phi_k$ is a stable $R$-module for all $k \in \Z_d$.
    \item A homomorphism between free $S$-modules $\phi: G \to F$ is called \textit{minimal} if $\Ima\phi \subseteq \mf n F$. 
    \item A matrix factorization $(\phi_1,\phi_2,\dots,\phi_d) \in\MatFac{S}{d}{f}$ is called \textit{reduced} if $\phi_k$ is minimal for all $k \in \Z_d$.
    \item A non-zero matrix factorization $X \in \MatFac{S}{d}{f}$ is \textit{indecomposable} if $X \cong X' \oplus X''$ implies $X' = 0$ or $X'' = 0$.
\end{enumerate}
\end{defi}

Eisenbud showed in \cite{eisenbud_homological_1980} that there is a one-to-one correspondence between reduced matrix factorizations (with 2 factors) and stable MCM $R$-modules. This correspondence can be realized in the form of a stable equivalence between the categories $\MatFac{S}{2}{f}$ and $\MCM(R)$. We record two consequences that will be needed later.

\begin{prop}\label{thm:Eisenbud_d=2}\cite{eisenbud_homological_1980}
Let $S$ be a regular local ring, $f$ a non-zero non-unit in $S$, and set $R= S/(f)$.
\begin{enumerate}[label = (\roman*)]
    \item \label{thm:Eisenbud_d=2:existence} For any MCM $R$-module $M$, there exists a matrix factorization $(\phi,\psi)\in \MatFac{S}{2}{f}$ such that $\phi$ is minimal and $\cok\phi \cong M$.
    \item \label{thm:Eisenbud_d=2:reduced} A matrix factorization $(\phi,\psi) \in \MatFac{S}{2}{f}$ is reduced if and only if it is stable. In this case, $\syz_R^1(\cok\phi) \cong \cok\psi$ and $\syz_R^1(\cok\psi) \cong \cok\phi$.
\end{enumerate}
\end{prop}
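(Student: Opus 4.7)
The plan is to prove each part separately, using the fact that the hypersurface structure converts matrix factorizations into $S$- and $R$-free resolutions.

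For part \ref{thm:Eisenbud_d=2:existence}, I would construct the factorization from a minimal $S$-free resolution of $M$. Since $M$ is a non-zero MCM $R$-module with $R=S/(f)$, a change-of-rings computation gives $\dep_S(M)=\dep_R(M)=\dim R=\dim S -1$, and the Auslander--Buchsbaum formula then forces $\pd_S(M)=1$. A minimal $S$-free resolution of $M$ therefore has the form $0\to G \xrightarrow{\phi} F \to M \to 0$ with $\phi$ automatically minimal. Because $fM=0$, multiplication by $f$ on $F$ induces the zero map on $M$, so it lifts through $\phi$ to a homomorphism $\psi\colon F\to G$ with $\phi\psi=f\cdot 1_F$. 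The companion identity $\psi\phi=f\cdot 1_G$ then follows by cancelling the injective map $\phi$, exactly as in the base case of Lemma \ref{thm:shifts_of_X}\ref{thm:shifts_of_X:1}.

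For part \ref{thm:Eisenbud_d=2:reduced}, the key observation is that any $(\phi,\psi)\in\MatFac{S}{2}{f}$ produces a $2$-periodic complex of free $R$-modules
\[\cdots \to \overline F \xrightarrow{\overline\psi} \overline G \xrightarrow{\overline\phi} \overline F \to \cok\phi \to 0,\]
where $\overline{(-)} = (-)\otimes_S R$, and that this complex is in fact exact. The nontrivial inclusion $\ker\overline\phi\subseteq \Ima\overline\psi$ uses the injectivity of $\phi$ and $\psi$ (coming from $S$ being a domain and $f$ non-zero): if $\phi(g)\in fF$, write $\phi(g)=fy=\phi\psi(y)$ and cancel $\phi$ to conclude $g=\psi(y)$. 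This complex is a minimal $R$-free resolution of $\cok\phi$ exactly when both $\phi$ and $\psi$ are minimal, i.e.\ when $(\phi,\psi)$ is reduced. In that case the reduced first syzygy is $\Ima\overline\phi\cong \overline G/\Ima\overline\psi = \cok\overline\psi \cong \cok\psi$, where the last identification uses that $\cok\psi$ is already annihilated by $f$. By symmetry one obtains $\syz_R^1(\cok\psi)\cong \cok\phi$.

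To finish the equivalence in \ref{thm:Eisenbud_d=2:reduced}, I would argue in two directions. If $(\phi,\psi)$ is not reduced, say $\phi$ has a unit entry, then elementary row and column operations over $S$ (which are isomorphisms in $\MatFac{S}{2}{f}$) bring $\phi$ into block form $(1)\oplus\phi'$. Injectivity of $\phi'$ together with $\phi\psi=f\cdot 1$ forces $\psi$ to decompose compatibly as $(f)\oplus \psi'$, so $\cok\psi=R\oplus \cok\psi'$ has a free summand and $(\phi,\psi)$ is not stable. Conversely, if $\cok\phi\cong R\oplus N$ is unstable, uniqueness of minimal $S$-free resolutions splits the minimal resolution of $\cok\phi$ as the direct sum of the resolution $0\to S\xrightarrow{f}S\to R\to 0$ and a minimal resolution of $N$; lifting this back writes $\phi=(f)\oplus \phi''$ with companion $\psi=(1)\oplus \psi''$, so $\psi$ fails to be minimal. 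The main technical obstacle is the block-diagonalization step: one must verify that the off-diagonal blocks of $\psi$ really do vanish, and this is exactly where the injectivity of $\phi$ and $\phi'$ does the work.
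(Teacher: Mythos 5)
The paper itself does not prove this proposition: it is stated as a black box with a citation to Eisenbud's original article, so there is no ``paper's proof'' to compare against. Your argument is essentially the standard one Eisenbud gives (and that appears in Yoshino's book), and it is correct in outline: the depth/Auslander--Buchsbaum computation forcing $\pd_S(M)=1$, lifting multiplication by $f$ through the minimal resolution, the $2$-periodic complex over $R$ with exactness coming from injectivity of $\phi,\psi$, and the block-diagonalization to relate unit entries to free summands of the cokernels are all the right moves.

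Two small things you should patch. First, in part \ref{thm:Eisenbud_d=2:existence}, after producing $\phi\colon G\to F$ and $\psi\colon F\to G$ with $\phi\psi=f\cdot 1_F$, you need $\rank_S F=\rank_S G$ for the pair to qualify as a matrix factorization under Definition \ref{defi:MF}; this is exactly Lemma \ref{thm:n_equals_m} and should be cited rather than left tacit. Second, in part \ref{thm:Eisenbud_d=2:reduced} you only treat one of the two symmetric cases in each direction of the equivalence (a unit entry in $\phi$, and a free summand of $\cok\phi$). ``Stable'' is a condition on both $\cok\phi$ and $\cok\psi$, and ``reduced'' on both $\phi$ and $\psi$, so you need the mirror-image statements (a unit entry in $\psi$ forces a free summand of $\cok\phi$, and a free summand of $\cok\psi$ forces a unit entry in $\phi$); of course these follow ``by symmetry,'' i.e.\ by applying the same argument to the matrix factorization $(\psi,\phi)$, but say so explicitly. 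With those remarks added the proof is complete and self-contained, which is more than the paper provides for this result.
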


We will see in Section \ref{section:OmegaMF} that only one direction of \ref{thm:Eisenbud_d=2:reduced} holds when $d>2$. Finally, we state another observation, also made by Eisenbud, that will help us identify matrix factorizations.

\begin{lem}\label{thm:n_equals_m}\cite[Corollary 5.4]{eisenbud_homological_1980}
Let $S$ be a regular local ring and $f$ a non-zero non-unit in $S$. Suppose $A: G \to F$ and $B:F \to G$ are homomorphisms of finitely generated free $S$-modules such that $AB = f \cdot 1_F$. Then, $\rank F = \rank G$.
\end{lem}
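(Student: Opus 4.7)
The plan is to reduce to linear algebra over the fraction field $K$ of $S$, which exists because a regular local ring is an integral domain. Writing $F_K := F \otimes_S K$ and $G_K := G \otimes_S K$, flat base change preserves ranks of finitely generated free modules, so it suffices to show $\dim_K F_K = \dim_K G_K$.

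Since $f \neq 0$ in $S$, it becomes a unit in $K$, and tensoring $AB = f \cdot 1_F$ over $K$ gives $A_K B_K = f \cdot 1_{F_K}$, a $K$-linear automorphism of $F_K$. From this we immediately read off that $B_K$ has a left inverse $f^{-1} A_K$ (so $B_K$ is injective) and that $A_K$ has a right inverse $f^{-1} B_K$ (so $A_K$ is surjective). To promote these to two-sided inverses, I would derive the companion identity $BA = f \cdot 1_G$ exactly as in the base case $d = 2$ of the proof of Lemma \ref{thm:shifts_of_X}(i): multiplying $AB = f \cdot 1_F$ on the left by $B$ gives $B(AB) = fB = (f \cdot 1_G)B$, so $(BA - f \cdot 1_G)B = 0$, and then the injectivity of $B$ forces $BA = f \cdot 1_G$. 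With both identities in hand, tensoring the second with $K$ shows $A_K$ and $B_K$ are mutually inverse up to the scalar $f$, so $\dim_K F_K = \dim_K G_K$ and the equality of ranks follows.

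The main obstacle I anticipate is the derivation of $BA = f \cdot 1_G$ from $AB = f \cdot 1_F$: injectivity of $B$ formally only permits cancellation on the left, so passing from $(BA - f \cdot 1_G)B = 0$ to $BA = f \cdot 1_G$ requires a slightly more careful argument. One way around this is to work over $K$ from the start, observe that $BA$ satisfies the polynomial identity $(BA)(BA - f \cdot 1_G) = 0$, and use $f \in K^{\times}$ to split $G_K$ as the direct sum of the $0$- and $f$-eigenspaces of $(BA)_K$; identifying the $f$-eigenspace with $\Ima B_K$ and showing the $0$-eigenspace vanishes delivers the desired isomorphism between $A_K$ and $B_K^{-1}$ (up to the factor $f$) and hence the equality of dimensions.
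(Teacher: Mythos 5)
Your overall strategy — pass to the fraction field $K$ and compare $K$-dimensions — is at heart the same as the paper's, since $\rank_S M = \dim_K(M\otimes_S K)$ and ``torsion'' means precisely $M\otimes_S K = 0$. But the paper's execution is direct and never touches $BA = f\cdot 1_G$: it observes that $f\cdot 1_F$ factors through $A$, so $\cok A$ is killed by $f$ and hence torsion, and then reads off $\rank F = \rank G + \rank(\cok A) = \rank G$ from the short exact sequence $0\to G\xrightarrow{A} F\to \cok A\to 0$, injectivity of $A$ being the only other input.

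Your detour through $BA = f\cdot 1_G$ opens a real gap, which you flagged but which your proposed patch does not close. From $(BA - f\cdot 1_G)B = 0$, injectivity of $B$ does not license cancellation on the right. The eigenspace workaround fails too: the identity $(BA)_K\bigl((BA)_K - f\cdot 1_{G_K}\bigr)=0$ does give $G_K = \ker(BA)_K \oplus \Ima(BA)_K$, but since $B_K$ is injective one has $\ker(BA)_K = \ker A_K$, so ``showing the $0$-eigenspace vanishes'' is exactly the injectivity of $A_K$, which is the substance of the lemma and not an incidental step. It does not follow from $AB = f\cdot 1_F$ alone: with $S=\mathbf{k}\llbracket x,y\rrbracket$, $f=xy$, $F=S$, $G=S^2$, $A=\begin{pmatrix}x & 0\end{pmatrix}$, and $B=\begin{pmatrix}y\\ 0\end{pmatrix}$, one has $AB = xy\cdot 1_F$, yet $\ker A\neq 0$ and $\rank F = 1\neq 2 = \rank G$. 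What is implicitly in force in the citation of Eisenbud's Corollary~5.4 is that $(A,B)$ is an honest two-factor matrix factorization, so that $BA = f\cdot 1_G$ is also given; once that is granted, both your route (taking $BA=f\cdot 1_G$ as a hypothesis rather than a goal) and the paper's (which then gets $A$ injective for free) succeed. The lesson is that the companion identity is not derivable here, and the paper's argument is precisely the one that does not need to derive it.
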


\begin{proof}
Once again, since $f$ is a non-zerodivisor, the maps $A$ and $B$ are injective. Since $f\cdot \cok A = 0$, $\cok A$ is a torsion $S$-module. From the short exact sequence
\[\begin{tikzcd}
0 \rar &G\rar{A} &F \rar &\cok A \rar &0
\end{tikzcd}\] it follows that $\rank_S F = \rank_S G$.
\end{proof}

\section{$\MatFac{S}{d}{f}$ is a Frobenius Category}\label{section:Frobenius}

In this section we show that there is a natural choice of an exact structure on the category $\MatFac{S}{d}{f}$ which induces the structure of a triangulated category on the stable category $\underline{\textup{MF}}_S^d(f)$ defined below. We start by recalling the axioms that define an exact category. The axioms and definitions below follow the presentation given in \cite{buhler_exact_2010} and we refer the reader to this paper for more information on exact categories. 

Let $\mc A$ be an additive category. A pair of composable morphisms $\begin{tikzcd} A' \rar{i} &A \rar{p} &A'' \end{tikzcd}$ is called a \textit{kernel-cokernel pair} if $i$ is a kernel of $p$ and $p$ is a cokernel of $i$. Given a collection of kernel-cokernel pairs, $\mc E$, we call a morphism $i:A' \to A$ an \textit{admissible monomorphism} if there exists a morphism $p: A \to A''$ such that $\begin{tikzcd} A' \rar{i} &A \rar{p} &A'' \end{tikzcd}$ is an element of $\mc E$. Dually, a morphism $p: A\to A''$ is an \textit{admissible epimorphism} is there exists a morphism $i:A' \to A$ such that their composition is in $\mc E$. We will indicate admissible monomorphisms and admissible epimorphisms by the arrows $\rightarrowtail$ and $\twoheadrightarrow$ respectively.

An \textit{exact structure} on $\mc A$ is a class $\mc E$ of kernel-cokernel pairs which is closed under isomorphisms and such that the following axioms hold:
\begin{enumerate}[itemsep = 5pt]
    \item[(E0)]\label{E0} The identity morphism $1_X$ is an admissible monomorphism for all $X \in \mc A$.
    \item[(E0$^{\textup{op}}$)]\label{E0op} The identity morphism $1_X$ is an admissible epimorphism for all $X \in \mc A$.
    \item[(E1)]\label{E1} Admissible monomorphisms are closed under composition.
    \item[(E1$^{\textup{op}}$)]\label{E1op} Admissible epimorphisms are closed under composition.
    \item[(E2)]\label{E2} The push-out of an admissible monomorphism $X \rightarrowtail Y$ and an arbitrary morphism $X \to X'$ exists and induces an admissible monomorphism as in the diagram
    \[\begin{tikzcd}
    X \dar \rar[tail] &Y\dar[dotted]\\
    X' \rar[dotted,tail] &Y'.
    \end{tikzcd}\]
    \item[(E2$^{\textup{op}}$)]\label{E2op} The pull-back of an admissible epimorphism $X'' \twoheadrightarrow Y'$ and an arbitrary morphism $Y \to Y'$  exists and induces an admissible epimorphism as in the diagram
    \[\begin{tikzcd}
    X \dar[dotted] \rar[two heads,dotted] &Y \dar\\
    X' \rar[two heads] &Y'.
    \end{tikzcd}\]
\end{enumerate}
Given an additive category $\mc A$ and a class $\mc E$ satisfying these axioms, the pair $(\mc A,\mc E)$ is called an \textit{exact category}.

Let $S$ be a regular local ring, $f \in S$ a non-zero non-unit, and set $R=S/(f)$. Fix an integer $d\ge 2$ and let $X=(\phi_1:F_2 \to F_1,\dots,\phi_d:F_1\to F_d), X'=(\phi_1':F_2'\to F_1',\dots,\phi_d':F_1'\to F_d'),$ and $X'' = (\phi_1'': F_2'' \to F_1'',\dots,\phi_d'':F_1'' \to F_d'')$ be matrix factorizations in $\MatFac{S}{d}{f}$.

\begin{defi}
Suppose we have a pair of morphisms $\alpha = (\alpha_1,\dots,\alpha_d): X \to X''$ and $\beta=(\beta_1,\dots,\beta_d):X' \to X$ in $\MatFac{S}{d}{f}$. Then, the composition
\[\begin{tikzcd}
&X' \rar{\beta} &X \rar{\alpha} &X''
\end{tikzcd} \] is called a \textit{short exact sequence of matrix factorizations} if the sequence 
\[\begin{tikzcd}
&0 \rar &F_k' \rar{\beta_k} &F_k \rar{\alpha_k} &F_k'' \rar &0
\end{tikzcd}\] is a short exact sequence of free $S$-modules for each $k \in \Z_d$.
\end{defi}

\begin{lem}
A short exact sequence of matrix factorizations is a kernel-cokernel pair in $\MatFac{S}{d}{f}$.
\end{lem}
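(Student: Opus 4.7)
The plan is to unpack the definitions of kernel and cokernel in $\MatFac{S}{d}{f}$ and to reduce each property to the corresponding property in the category of finitely generated free $S$-modules, where the hypothesis already delivers short exact sequences $0 \to F_k' \xrightarrow{\beta_k} F_k \xrightarrow{\alpha_k} F_k'' \to 0$ at each index $k \in \Z_d$. The construction of the required factorizations will be component-wise; the real content is to verify that the resulting component tuples are actually morphisms in $\MatFac{S}{d}{f}$, i.e.\ that they commute with the $\phi$-maps.

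For the kernel property, suppose $\gamma = (\gamma_1,\dots,\gamma_d):Y \to X$ is a morphism of matrix factorizations with $\alpha\circ\gamma = 0$, where $Y = (\psi_1,\dots,\psi_d)$. Since $\alpha_k\gamma_k = 0$ at each level and $\beta_k$ is a kernel of $\alpha_k$ in $S$-mod, there exists a unique $S$-module map $\delta_k:Y_k \to F_k'$ with $\beta_k\delta_k = \gamma_k$. I would then check that $\delta = (\delta_1,\dots,\delta_d)$ is a morphism in $\MatFac{S}{d}{f}$ by the computation
\[
\beta_k\,\phi_k'\delta_{k+1} \;=\; \phi_k\,\beta_{k+1}\delta_{k+1} \;=\; \phi_k\,\gamma_{k+1} \;=\; \gamma_k\,\psi_k \;=\; \beta_k\,\delta_k\psi_k,
\]
and then cancel $\beta_k$ on the left using the fact that $\beta_k$ is injective. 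Uniqueness of $\delta$ as a morphism of matrix factorizations follows from the component-wise uniqueness in $S$-mod.

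For the cokernel property, a dual argument applies. Given $\gamma = (\gamma_1,\dots,\gamma_d):X \to Z$ with $\gamma\circ\beta = 0$, where $Z = (\psi_1^Z,\dots,\psi_d^Z)$, the surjectivity of $\alpha_k$ together with $\gamma_k\beta_k = 0$ produces a unique $S$-module map $\delta_k:F_k'' \to Z_k$ with $\delta_k\alpha_k = \gamma_k$. Compatibility with the factorization data is verified by
\[
\psi_k^Z\,\delta_{k+1}\alpha_{k+1} \;=\; \psi_k^Z\,\gamma_{k+1} \;=\; \gamma_k\,\phi_k \;=\; \delta_k\,\alpha_k\phi_k \;=\; \delta_k\,\phi_k''\alpha_{k+1},
\]
and cancelling the surjection $\alpha_{k+1}$ on the right gives $\psi_k^Z\delta_{k+1} = \delta_k\phi_k''$, as required.

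The argument is almost entirely formal diagram chasing, and the only substantive point is the cancellation step that turns a component-wise universal map into a bona fide morphism of matrix factorizations. The main obstacle, such as it is, is keeping the cyclic indexing conventions straight and remembering that $\beta_k$ being a kernel in $S$-mod gives injectivity while $\alpha_k$ being a cokernel gives surjectivity, which are precisely what is needed to execute the two cancellations above.
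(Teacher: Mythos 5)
Your proof is correct and follows essentially the same approach as the paper: lift the component-wise universal maps from $S$-mod, then verify compatibility with the $\phi$-maps by a cancellation using the injectivity of $\beta_k$ (resp.\ surjectivity of $\alpha_k$). The only difference is that you spell out the cokernel half explicitly, whereas the paper notes it is dual and omits it.
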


\begin{proof}
Let \begin{tikzcd}
X' \rar{\beta} &X \rar{\alpha} &X''
\end{tikzcd} be a short exact sequence of matrix factorizations. First we show that $\beta$ is the kernel of $\alpha$. Certainly, we have that $\alpha\beta = 0$. Suppose $g: Y \to X$ is another morphism such that $\alpha g = 0$, where $Y = (\psi_1:G_2\to G_1,\dots,\psi_d: G_1 \to G_d)$. Let $k\in\Z_d$. We have the following diagram of free $S$-modules
\[\begin{tikzcd}
&&G_k \dlar[dotted,swap]{\tilde g_k}\dar{g_k} \drar{0}&\\
0\rar&F_k' \rar{\beta_k} &F_k \rar{\alpha_k} &F_k''\rar &0
\end{tikzcd}\] where the bottom row is exact. Since $\beta_k$ is the kernel of $\alpha_k$, there exists a unique $S$-homomorphism $\tilde g_k: G_k \to F_k'$ such that $\beta_k \tilde g_k = g_k$. It suffices to show that $\tilde g =(\tilde g_1,\tilde g_2,\dots, \tilde g_d): Y \to X'$ is a morphism of matrix factorizations since each $\tilde g_k$, $k \in\Z_d$, is uniquely determined. That is, we need to show that the diagram
\[\begin{tikzcd}
G_{k+1} \dar{\tilde g_{k+1}} \rar{\psi_k} &G_k \dar{\tilde g_k}\\
F_{k+1}' \rar{\phi_k'} &F_k'
\end{tikzcd}\] commutes for all $k \in \Z_d$. Note that $g_k\psi_k = \phi_kg_{k+1}$ and $\phi_k\beta_{k+1}=\beta_k\phi_k'$ since $g$ and $\beta$ are morphisms in $\MatFac{S}{d}{f}$. Then, $\beta_k\tilde g_k \psi_k = g_k\psi_k = \phi_kg_{k+1}= \phi_k \beta_{k+1}\tilde g_{k+1} = \beta_k \phi_k' \tilde g_{k+1}$ and since $\beta_k$ is injective, we can cancel it on the left to conclude that $\tilde g_k \psi_k = \phi_k' \tilde g_{k+1}$ as desired. Hence, $\tilde g$ is the unique morphism such that 
\[\begin{split}
    \beta \circ \tilde g &=(\beta_1\tilde g_1,\dots, \beta_d\tilde g_d)\\
    &= (g_1,\dots, g_d)\\
    &= g.
\end{split}\] This completes the proof that $\beta$ is a kernel of $\alpha$. The proof that $\alpha$ is a cokernel of $\beta$ is similar.
\end{proof}

Let $\mathcal E_d$ denote the class of short exact sequences of matrix factorizations in $\MatFac{S}{d}{f}$. The first four axioms of an exact category are satisfied by the pair $(\MatFac{S}{d}{f},\mc E_d)$ directly from the definitions. The axioms $[\textup{E}2]$ and $[\textup{E}2^{\textup{op}}]$ also hold, which we will show below. Before we do, we need to know more about the form of the admissible morphisms in $(\MatFac{S}{d}{f}, \mc E_d)$.

\begin{lem}\label{thm:admiss_morphs_form}
Let $\gamma = (\gamma_1,\dots,\gamma_d): X \to X''$ be a morphism of matrix factorizations.
\begin{enumerate}
    \item $\gamma$ is an admissible epimorphism if and only if the $S$-homomorphisms $\gamma_1,\dots,\gamma_d$ are surjections.
    \item $\gamma$ is an admissible monomorphism if and only if the $S$-homomorphisms $\gamma_1,\dots,\gamma_d$ are split injections.
\end{enumerate}
\end{lem}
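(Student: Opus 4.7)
The plan is to prove each equivalence by noting that the forward directions are essentially immediate from the definitions and then constructing, in each backward direction, the companion map and exact sequence of matrix factorizations explicitly by restricting (for the kernel) or passing to the quotient (for the cokernel).

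For the forward direction of both statements, if $\gamma$ is an admissible epimorphism then by definition there is some $\beta:X'\to X$ with $X' \rightarrowtail X \twoheadrightarrow X''$ a short exact sequence of matrix factorizations, so by definition each component $F_k' \to F_k \to F_k''$ is exact, forcing each $\gamma_k$ to be surjective. Dually, if $\gamma$ is an admissible monomorphism, there is some $\delta:X''\to X'''$ making $X \rightarrowtail X'' \twoheadrightarrow X'''$ a short exact sequence of matrix factorizations; since each $F_k'''$ is then a free $S$-module, the short exact sequence of $S$-modules $0 \to F_k \to F_k'' \to F_k''' \to 0$ splits, so $\gamma_k$ is a split injection.

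For the backward direction of (1), assume each $\gamma_k$ is surjective and set $K_k := \ker \gamma_k$. Since $F_k''$ is free, each sequence $0\to K_k \to F_k \to F_k''\to 0$ splits, so $K_k$ is a free $S$-module. The key observation is that the relation $\gamma_k \phi_k = \phi_k'' \gamma_{k+1}$ forces $\phi_k(K_{k+1}) \subseteq K_k$, so the $\phi_k$'s restrict to $S$-homomorphisms $\psi_k: K_{k+1} \to K_k$. The identity $\phi_1 \phi_2 \cdots \phi_d = f\cdot 1_{F_1}$ restricted to $K_1$ gives $\psi_1\psi_2\cdots \psi_d = f\cdot 1_{K_1}$, and by Lemma \ref{thm:n_equals_m} applied consecutively, all the $K_k$ have the same rank. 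Hence $X' := (\psi_1,\dots,\psi_d)$ is a matrix factorization and the inclusions $\beta_k: K_k \hookrightarrow F_k$ assemble into a morphism $\beta:X' \to X$ making $X' \to X \to X''$ a short exact sequence of matrix factorizations, so $\gamma$ is admissible epi.

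For the backward direction of (2), suppose each $\gamma_k$ is a split injection, so that $C_k := \cok \gamma_k$ is a direct summand of the free module $F_k''$ and therefore free. Now $\phi_k'' \gamma_{k+1} = \gamma_k \phi_k$ gives $\phi_k''(\gamma_{k+1}(F_{k+1})) \subseteq \gamma_k(F_k)$, so each $\phi_k''$ descends to an $S$-homomorphism $\overline{\phi_k''}: C_{k+1} \to C_k$. As in the previous paragraph, the identity $\phi_1''\phi_2''\cdots \phi_d'' = f\cdot 1_{F_1''}$ descends to $\overline{\phi_1''}\cdots \overline{\phi_d''} = f\cdot 1_{C_1}$, and Lemma \ref{thm:n_equals_m} ensures the $C_k$ are of equal rank, making $X''' := (\overline{\phi_1''},\dots,\overline{\phi_d''})$ a matrix factorization. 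The projections $\delta_k: F_k'' \twoheadrightarrow C_k$ assemble into a morphism $\delta:X'' \to X'''$ and $X \to X'' \to X'''$ is a short exact sequence of matrix factorizations, so $\gamma$ is admissible mono. The main (mild) obstacle in both directions is not any deep argument but rather verifying carefully that the restricted or induced maps do produce a matrix factorization, i.e.\ that the relevant free $S$-modules all share a common rank, which is where Lemma \ref{thm:n_equals_m} enters.
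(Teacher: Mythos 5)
Your proof is correct and follows essentially the same approach as the paper: the forward directions are read off directly from the definition of a short exact sequence of matrix factorizations (using freeness of the end terms to split), and the backward directions construct the companion factorization as the termwise kernel (resp.\ cokernel), observing that the structure maps restrict (resp.\ descend) and that the product identity is inherited. The only cosmetic differences are that the paper writes out only part (2), carries out the descent of the identity $\phi_1''\cdots\phi_d''=f\cdot 1$ via an explicit splitting $\pi_1 t_1 = 1$, and gets equality of ranks by the direct count $\rank F_k - \rank F_k'$, whereas you phrase the descent more concisely and invoke Lemma~\ref{thm:n_equals_m} for the ranks; both are fine.
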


\begin{proof} We prove only (2) as the proof of (1) is similar. Suppose $\gamma$ is an admissible monomorphism. Then, there exists an admissible epimorphism $\pi=(\pi_1,\pi_2,\dots,\pi_d): X \to X''$ such that $\begin{tikzcd}
X' \rar[tail]{\gamma} &X \rar[two heads]{\pi} &X''
\end{tikzcd}$ is a short exact sequence of matrix factorizations. In particular,
\[\begin{tikzcd}
0 \rar &F_k' \rar{\gamma_k} &F_k \rar{\pi_k} &F_k'' \rar &0
\end{tikzcd}\] is a short exact sequence of $S$-modules for each $k\in \Z_d$. Since $F_k''$ is free, this sequence is split and therefore $\gamma_k$ is a split injection.

Conversely, suppose the homomorphisms $\gamma_1,\dots,\gamma_d$ are each split injections. For $k\in \Z_d$, set $F_k'' \coloneqq  \cok\gamma_k$ and $\pi_k: F_k \to F_k''$ the natural projection map. Notice that $F_k''$ is a free $S$-module of rank equal to $\rank F_k - \rank F_k'$. Now, for each $k \in \Z_d$, there exists a map $\phi_k'':F_{k+1}\to F_k$ such that the following diagram with split exact rows commutes:
\[\begin{tikzcd}
&0 \rar &F_1' \dar{\phi_d'} \rar{\gamma_1} &F_1 \rar{\pi_1} \dar{\phi_d} &F_1'' \dar[dotted]{\phi_d''}\rar &0\\
&0 \rar &F_d' \rar{\gamma_d} \dar{\phi_{d-1}'} &F_d \rar{\pi_d}\dar{\phi_{d-1}} &F_d'' \rar \dar[dotted]{\phi_{d-1}''} &0\\
&&\vdots \dar{\phi_2'} &\vdots \dar{\phi_2} &\vdots \dar[dotted]{\phi_2''} &\\
&0 \rar &F_2' \rar{\gamma_2} \dar{\phi_1'} &F_2 \rar{\pi_2}\dar{\phi_1} &F_2'' \dar[dotted]{\phi_1''}\rar &0\\
&0 \rar &F_1' \rar{\gamma_1} &F_1 \rar{\pi_1} &F_1'' \rar &0.
\end{tikzcd}\] In particular, there exists $t_1:F_1'' \to F_1$ such that $\pi_1t_1 = 1_{F_1''}$. The splitting allows us to compute the composition along the right most column:
\[\begin{split}
    \phi_1''\phi_2'' \cdots \phi_d'' &= \pi_1\phi_1\phi_2\cdots\phi_d t_1\\
                                    &= f\cdot \pi_1 t_1\\
                                    &= f \cdot 1_{F_1''}.
\end{split}\] Since the free modules $F_1'',F_2'',\dots,F_d''$ are all of the same rank, we have that $X'' = (\phi_1'':F_2'' \to F_1'',\dots,\phi_d'': F_1'' \to F_d'') \in \MatFac{S}{d}{f}$ and
\[\begin{tikzcd}[column sep = 3.7em]
X' \rar[tail]{\gamma} &X \rar[two heads]{(\pi_1,\dots,\pi_d)} &X''
\end{tikzcd}\] is a short exact sequence of matrix factorizations.

\end{proof}

Lemma \ref{thm:admiss_morphs_form} indicates that not every monomorphism of matrix factorizations is an admissible monomorphism. The simplest example of this arises when $d=2$. 

\begin{example}Suppose $(\phi:G \to F,\psi: F \to G) \in \MatFac{S}{2}{f}$ with $\cok\psi\neq 0$. Then, the tuple $(\psi,1_G)$ forms a morphism between the matrix factorizations $(\phi,\psi) \to (f\cdot 1_G , 1_G)$. This morphism is a monomorphism, in the sense that it can be cancelled on the left, but it is not admissible since the cokernel of $\psi$ is not a free $S$-module. 

The same is true of epimorphisms, that is, there are epimorphisms that are not admissible. For $(\phi,\psi)\in \MatFac{S}{2}{f}$ with $\cok\psi\neq 0$, the tuple $(1_F,\psi)$ forms a morphism between the matrix factorizations $(f\cdot 1_F,1_F) \to (\phi,\psi)$. If $(a,b)\circ(1_F,\psi) = (a',b')\circ(1_F,\psi)$ for some morphisms $(a,b),(a',b')$, then $a=a'$ and $b\psi = b'\psi$. We can pre-compose both sides of the second equation with $\phi$ to get $b\cdot f = b' \cdot f$, hence $b=b'$. So, $(1_F,\psi)$ can be cancelled on the right but is not admissible epimorphism since $\psi$ is not surjective. 

Actually, further inspection of these examples shows they are both monomorphisms and both epimorphisms but neither is admissible of either type. In particular, neither is an isomorphism. In Abelian category, a monomorphism which is also an epimorphism must be an isomorphism. Similar examples can be constructed for all $d>2$ and therefore we note that the category $\MatFac{S}{d}{f}$ is not Abelian for any $d\ge 2$.
\end{example}

\begin{prop}
The collection $\mc E_d$ of short exact sequences of matrix factorizations in $\MatFac{S}{d}{f}$ satisfies the axioms $[\textup{E}2]$ and $[\textup{E}2^{\textup{op}}]$.
\end{prop}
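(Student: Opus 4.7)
The plan is to construct the required push-outs and pull-backs componentwise in the category of $S$-modules and then verify that the componentwise data assembles into a matrix factorization. The key input is Lemma \ref{thm:admiss_morphs_form}, which characterizes admissible monomorphisms as tuples of split $S$-injections and admissible epimorphisms as tuples of $S$-surjections; this is precisely what keeps the resulting free modules free.

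For $[\textup{E}2]$, given an admissible monomorphism $\beta: X' \rightarrowtail X$ and an arbitrary morphism $\delta: X' \to Y = (\psi_1, \dots, \psi_d)$ in $\MatFac{S}{d}{f}$, I would first form the $S$-module push-out $P_k$ of $\beta_k$ and $\delta_k$ for each $k \in \Z_d$, with structure maps $\iota_k: F_k \to P_k$ and $\jmath_k: G_k \to P_k$. Since each $\beta_k$ is a split injection, a decomposition $F_k \cong F_k' \oplus F_k''$ identifies $P_k \cong G_k \oplus F_k''$, so each $P_k$ is free of rank $\rank G_k + \rank F_k - \rank F_k'$, which is independent of $k$. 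The universal property of the push-out produces unique $S$-homomorphisms $\rho_k: P_{k+1} \to P_k$ satisfying $\rho_k \iota_{k+1} = \iota_k \phi_k$ and $\rho_k \jmath_{k+1} = \jmath_k \psi_k$; the compatibility needed is exactly that $\beta$ and $\delta$ are morphisms of matrix factorizations. I would then verify $\rho_1 \rho_2 \cdots \rho_d = f \cdot 1_{P_1}$ by iterating the defining relations to obtain $\rho_1 \cdots \rho_d \, \iota_1 = f \cdot \iota_1$ and $\rho_1 \cdots \rho_d \, \jmath_1 = f \cdot \jmath_1$, and then invoking uniqueness in the push-out. Finally, the tuple $\jmath = (\jmath_1, \dots, \jmath_d): Y \to P$ is an admissible monomorphism because each $\jmath_k$ is a split injection with complement $F_k''$, and $(\iota_1, \dots, \iota_d)$ completes the push-out square in $\MatFac{S}{d}{f}$.

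For $[\textup{E}2^{\textup{op}}]$, the argument is entirely dual: given an admissible epimorphism $\alpha: X'' \twoheadrightarrow Y'$ and an arbitrary morphism $\eta: Y \to Y'$, I would form the $S$-module pull-back $Q_k$ of $\alpha_k$ and $\eta_k$ in each component. Admissibility of $\alpha$ supplies a short exact sequence $X' \rightarrowtail X'' \twoheadrightarrow Y'$, so $\ker \alpha_k \cong F_k'$ is free. The pull-back then sits in a short exact sequence $0 \to \ker \alpha_k \to Q_k \to G_k \to 0$ which splits, giving $Q_k$ free of rank independent of $k$. The induced maps between the $Q_k$ come from the pull-back universal property, and $\rho_1 \cdots \rho_d = f \cdot 1_{Q_1}$ is recovered by the same uniqueness argument.

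I do not expect any individual step to be deep; the challenge is mostly book-keeping. The most delicate part will be verifying that the induced maps $\rho_k$ satisfy $\rho_1 \cdots \rho_d = f \cdot 1_{P_1}$: one must carefully trace the matrix factorization relations in $X$ and $Y$ along the two cones into $P_1$ (one via $\iota_1$, one via $\jmath_1$) and then appeal to the uniqueness clause of the push-out. The cyclic indexing convention also requires some care when threading the composition around the full cycle of length $d$.
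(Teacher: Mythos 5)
Your proposal is correct and follows essentially the same route as the paper: build the push-out (respectively pull-back) componentwise in $S$-modules, use Lemma \ref{thm:admiss_morphs_form} to see that split exactness makes the resulting $P_k$ free of uniform rank, invoke the push-out universal property for the induced maps $\rho_k$, and conclude admissibility of the induced morphism. The one cosmetic difference is that you verify $\rho_1\cdots\rho_d = f\cdot 1_{P_1}$ by the uniqueness clause of the push-out (checking it on $\iota_1$ and $\jmath_1$), whereas the paper writes down the explicit formula for $\chi_k$ on coordinates and reads it off; both are valid, and like the paper you leave the verification that the resulting square is a push-out in $\MatFac{S}{d}{f}$ (not merely componentwise) as routine.
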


\begin{proof}
We will show that $[\textup{E}2]$ holds. The proof that $[\textup{E}2^{\textup{op}}]$ is satisfied is similar. Suppose we have a diagram in $\MatFac{S}{d}{f}$
\begin{equation}\label{diagram:pushout}
\begin{tikzcd}
X \dar{\beta}\rar[tail]{q} &Y\\
X'
\end{tikzcd}\end{equation} where $Y = (\psi_1:G_2\to G_1,\dots,\psi_d: G_1 \to G_d)$, $\beta=(\beta_1,\dots,\beta_d)$, and $q = (q_1,\dots,q_d)$. Let $k \in \Z_d$. We may take the push-out of the injection $q_k$ and the map $\beta_k$ which yields the diagram
\[\begin{tikzcd}
0 \rar &F_k \dar{\beta_k} \rar{q_k} &G_k \dar{p_k} \rar &\cok q_k \dar[equals] \rar &0\\
0 \rar &F_k' \rar{\alpha_k} &P_k \rar &\cok q_k \rar &0.
\end{tikzcd}\] We make the following observations from this diagram: Since the morphism $q$ is an admissible monomorphism, the map $q_k$ is a split injection. Hence, $\cok q_k$ is a free $S$-module. It follows that the bottom sequence also splits and so $P_k$ is free with $\rank_S P_k = \rank_S F_k' + \rank_S G_k - \rank_S F_k$. This also implies that $\alpha_k$ is a split injection. Since $k$ was arbitrary, this yields $d$ free $S$-modules $P_1,P_2,\dots,P_d$, each of the same rank, and $d$-tuples $\alpha =(\alpha_1,\dots,\alpha_d)$ and $p = (p_1,\dots,p_d)$.

Next, let $k \in \Z_d$ and consider the diagram
\[\begin{tikzcd}
F_k \dar{\beta_k} \rar{q_k} &G_k \dar{p_k} \arrow[ddr, bend left, "p_{k-1}\psi_{k-1}"] &\\
F_k' \ar[drr, bend right,swap, "\alpha_{k-1}\phi_{k-1}'"] \rar{\alpha_k} &P_k \drar[dotted]{\chi_{k-1}} &\\
&&P_{k-1}.
\end{tikzcd}\]
There is a unique homomorphism $\chi_{k-1}: P_k \to P_{k-1}$ depicted above since $$p_{k-1}\psi_{k-1}q_k = p_{k-1}q_{k-1}\phi_{k-1} = \alpha_{k-1}\beta_{k-1}\phi_{k-1}= \alpha_{k-1}\phi_{k-1}'\beta_k.$$ In particular, the map $\chi_{k-1}$ is given by \[\chi_{k-1}(\overline{(a_{k'},b_k)}) = \alpha_{k-1}\phi_{k-1}'(a_k) + p_{k-1}\psi_{k-1}(b_k) = \overline{(\phi_{k-1}'(a_k),\psi_{k-1}(b_k))} \in P_{k-1},\] for any $(a_k,b_k) \in F_k'\oplus G_k$. In other words, $\chi_{k-1}$ is the map induced by the direct sum $\phi_{k-1}'\oplus \psi_{k-1}$ on the quotients $P_k \to P_{k-1}$. These maps link together to form a sequence of compositions
\[\begin{tikzcd}
P_1 \rar{\chi_d} &P_d \rar{\chi_{d-1}} &\cdots \rar{\chi_2} &P_2 \rar{\chi_1} &P_1.
\end{tikzcd}\] From the explicit description of $\chi_k$ we have that $\chi_1\chi_2\cdots\chi_d = f\cdot 1_{P_1}$. Since the free $S$-modules $P_1,\dots,P_d$ are all of the same rank, it follows that $Y' = (\chi_1:P_2\to P_1,\cdots, \chi_d: P_1 \to P_d)$ is a matrix factorization of $f$. 

It is not hard to see that $\alpha: X' \to Y'$ and $p:Y\to Y'$ form morphisms of matrix factorizations and that these morphisms render \eqref{diagram:pushout} a commutative square. As we showed above, the map $\alpha_k$ is a split injection for all $k \in \Z_d$. Hence, $\alpha$ is an admissible monomorphism by Lemma \ref{thm:admiss_morphs_form}. To finish the proof, it suffices to check the necessary universal property which we omit as it is a straightforward computation.

\end{proof}

\begin{cor}
The pair $(\MatFac{S}{d}{f},\mc E_d)$ is an exact category. \qed
\end{cor}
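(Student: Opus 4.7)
The plan is to observe that the corollary is essentially a bookkeeping statement: the preceding proposition established axioms $[\textup{E}2]$ and $[\textup{E}2^{\textup{op}}]$, so all that remains is to verify $[\textup{E}0]$, $[\textup{E}0^{\textup{op}}]$, $[\textup{E}1]$, $[\textup{E}1^{\textup{op}}]$, together with the closure of $\mc E_d$ under isomorphism of kernel-cokernel pairs. Each of these is forced by componentwise considerations on free $S$-modules, combined with the characterization of admissible morphisms given by Lemma \ref{thm:admiss_morphs_form}.

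First I would note that $\mc E_d$ is closed under isomorphism: given a diagram isomorphism between $X' \xrightarrow{\beta} X \xrightarrow{\alpha} X''$ and a sequence in $\mc E_d$, applying the isomorphism componentwise exhibits each $0 \to F_k' \to F_k \to F_k'' \to 0$ as isomorphic to a short exact sequence of free $S$-modules, and hence short exact itself.

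Next, for $[\textup{E}0]$ and $[\textup{E}0^{\textup{op}}]$, given any $X \in \MatFac{S}{d}{f}$ the sequence $0 \to X \xrightarrow{1_X} X \to 0$ lies in $\mc E_d$ since componentwise $0 \to F_k \xrightarrow{1_{F_k}} F_k \to 0$ is exact. Thus $1_X$ is simultaneously an admissible monomorphism and an admissible epimorphism. For $[\textup{E}1]$ and $[\textup{E}1^{\textup{op}}]$, I would invoke Lemma \ref{thm:admiss_morphs_form} to reduce the question to two elementary facts about free $S$-modules: compositions of split injections are split injections, and compositions of surjections are surjections. Given admissible monomorphisms $\gamma: X' \rightarrowtail X$ and $\delta: X \rightarrowtail Y$, their composite $\delta \circ \gamma$ is componentwise a split injection, and the converse direction of Lemma \ref{thm:admiss_morphs_form} produces a matrix factorization structure on the componentwise cokernels that exhibits $\delta \circ \gamma$ as the left-hand map of an element of $\mc E_d$; the argument for $[\textup{E}1^{\textup{op}}]$ is dual.

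There is no substantive obstacle here — the heavy lifting was carried out in the preceding proposition and in Lemma \ref{thm:admiss_morphs_form}. The only mild subtlety is that $[\textup{E}1]$ is not merely a statement about the form of the composite's components, but requires the existence of an actual admissible cokernel pair; this is precisely what the converse direction of Lemma \ref{thm:admiss_morphs_form} supplies, since one can take the componentwise cokernels and equip them with the induced maps as in that lemma's construction.
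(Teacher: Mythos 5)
Your proof is correct and follows the same route as the paper. The paper asserts that the first four axioms hold "directly from the definitions" and then proves $[\textup{E}2]$ and $[\textup{E}2^{\textup{op}}]$ in the preceding proposition; you simply make explicit the verification of $[\textup{E}0]$, $[\textup{E}0^{\textup{op}}]$, $[\textup{E}1]$, $[\textup{E}1^{\textup{op}}]$, and closure under isomorphism, correctly observing that the converse direction of Lemma \ref{thm:admiss_morphs_form} is what furnishes the required matrix factorization structure on the componentwise cokernel for $[\textup{E}1]$, rather than it being a purely formal consequence of the definitions.
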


With the exact structure on $\MatFac{S}{d}{f}$ fixed, we will often omit reference to $\mc E_d$. We proceed now with the main goal of this section: to show that the exact category $\MatFac{S}{d}{f}$ is a Frobenius category. First, we recall the necessary definitions which can also be found in \cite{buhler_exact_2010}.

An object in an exact category $(\mc A, \mc E)$ is called \textit{projective}, respectively \textit{injective}, if it satisfies the usual lifting property with respect to admissible epimorphisms, respectively admissible monomorphisms. The pair $(\mc A,\mc E)$ is said to have \textit{enough projectives} if for every object $X \in \mc A$, there is an admissible epimorphism $P \twoheadrightarrow X$ with $P$ projective. Dually, $(\mc A, \mc E)$ has \textit{enough injectives} if for every object $X \in \mc A$, there is an admissible monomorphism $X \rightarrowtail I$ with $I$ injective. The exact category $(\mc A, \mc E)$ is said to be a \textit{Frobenius category} if it has enough projectives, enough injectives, and the classes of projective objects and injective objects coincide.

\begin{defi}
For each $i \in \Z_d$, let $\mc P_i$ denote the matrix factorization of size $1$ whose $i$th component is multiplication by $f$ on $S$ while the rest are the identity on $S$. In other words, $\mc P_i$ is given by the composition
\[\begin{tikzcd}
S_1 \rar{1} &S_d \rar{1} &\cdots \rar{1} &S_{i+1} \rar{f} &S_i \rar{1} &\cdots \rar{1} &S_2 \rar{1} &S_1
\end{tikzcd}\] where $S_k = S$ for each $k \in \Z_d$. We also set $\mc P = \bigoplus_{i \in \Z_d} \mc P_i$.
\end{defi}

\begin{lem}\label{thm:shift_of_projs}
Let $X \in \MatFac{S}{d}{f}$ and $j \in \Z_d$. Then, $X$ is projective (respectively injective) if and only if $T^j(X)$ is projective (respectively injective).
\end{lem}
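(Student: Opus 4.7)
The plan is to show that each functor $T^j \colon \MatFac{S}{d}{f} \to \MatFac{S}{d}{f}$ is an auto-equivalence of the exact category $(\MatFac{S}{d}{f}, \mc E_d)$, that is, it is an additive equivalence which preserves and reflects short exact sequences. Once this is established, the preservation of projectives and injectives is a formal consequence of the lifting definitions.

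First, I would observe that $T^j \circ T^{d-j} = T^d = \operatorname{id}$, so $T^j$ is an additive auto-equivalence of $\MatFac{S}{d}{f}$ with inverse $T^{d-j}$. Next, I would check that $T^j$ sends short exact sequences to short exact sequences. This is essentially tautological from the definitions: a sequence $\begin{tikzcd} X' \rar{\beta} & X \rar{\alpha} & X'' \end{tikzcd}$ in $\MatFac{S}{d}{f}$ belongs to $\mc E_d$ precisely when $\begin{tikzcd} 0 \rar & F_k' \rar{\beta_k} & F_k \rar{\alpha_k} & F_k'' \rar & 0 \end{tikzcd}$ is split exact for every $k \in \Z_d$, and the effect of $T^j$ is only to cyclically relabel the index $k$. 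The same observation shows that $T^j$ preserves admissible monomorphisms and admissible epimorphisms, by Lemma \ref{thm:admiss_morphs_form}.

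With this in hand, the projective case is immediate. Suppose $X$ is projective and let $\pi \colon Y \twoheadrightarrow Z$ be an admissible epimorphism with a morphism $\alpha \colon T^j(X) \to Z$. Applying $T^{d-j}$, we obtain an admissible epimorphism $T^{d-j}(\pi) \colon T^{d-j}(Y) \twoheadrightarrow T^{d-j}(Z)$ together with a morphism $T^{d-j}(\alpha) \colon X \to T^{d-j}(Z)$. By projectivity of $X$, there is a lift $\beta \colon X \to T^{d-j}(Y)$ with $T^{d-j}(\pi) \circ \beta = T^{d-j}(\alpha)$. Applying $T^j$ then yields a lift $T^j(\beta) \colon T^j(X) \to Y$ with $\pi \circ T^j(\beta) = \alpha$, so $T^j(X)$ is projective. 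The converse direction is the same argument applied with $T^{d-j}$ in place of $T^j$, and the injective statement is dual.

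There is no real obstacle here; the only point requiring care is confirming that $T^j$ preserves the class $\mc E_d$ (and hence admissible monos/epis), which is transparent because $\mc E_d$ is defined componentwise and $T^j$ acts by cyclically permuting components.
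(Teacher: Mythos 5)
Your proof is correct and takes essentially the same route as the paper: both use the inverse shift $T^{-j}$ (your $T^{d-j}$) to transport a lifting problem for $T^j(X)$ back to one for $X$, invoke Lemma \ref{thm:admiss_morphs_form} to see that admissible epimorphisms (resp.\ monomorphisms) are preserved under shifting, and then apply $T^j$ to the resulting lift. Your framing—stating upfront that $T^j$ is an exact auto-equivalence of $(\MatFac{S}{d}{f}, \mc E_d)$—is a slightly more systematic packaging of the same idea, but the substance is identical.
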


\begin{proof}
Suppose $X$ is projective and $j \in \Z_d$. Let $\alpha=(\alpha_1,\dots,\alpha_d): X' \to X''$ be an admissible morphism and let $p=(p_1,\dots,p_d): T^j(X) \to X''$ be any morphism. Then, we have morphisms $T^{-j}(\alpha): T^{-j}(X') \to T^{-j}(X'')$ and $T^{-j}(p): X \to T^{-j}(X'')$. The characterization of admissible epimorphisms in Lemma \ref{thm:admiss_morphs_form} implies that $T^{-j}(\alpha)$ is also an admissible epimorphism. Since $X$ is projective, there exists $q = (q_1,q_2,\dots,q_d) : X \to T^{-j}(X')$ such that $T^{-j}(\alpha)q = T^{-j}(p)$. Applying $T^j$ we find that $\alpha T^j(q) = p$ implying that $T^j(X)$ is projective. The proof of the converse and both directions regarding injectivity are similar.
\end{proof}

\begin{lem}\label{thm:P_i_proj_inj} The matrix factorizations $\mc P_1,\mc P_2,\dots,\mc P_d$ are both projective and injective in $\MatFac{S}{d}{f}$.
\end{lem}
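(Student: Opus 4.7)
The plan is to reduce to the single factorization $\mc P_1$ via Lemma \ref{thm:shift_of_projs}. Since $T(\phi_1,\dots,\phi_d) = (\phi_2,\dots,\phi_d,\phi_1)$ shifts the distinguished $f$-entry one step backward in the cyclic index, one checks directly that $\mc P_i = T^{1-i}(\mc P_1)$ for every $i \in \Z_d$; by Lemma \ref{thm:shift_of_projs} it then suffices to verify that $\mc P_1$ alone is both projective and injective.

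The central observation I would isolate up front is that $\mc P_1$ behaves as a free object of rank one in $\MatFac{S}{d}{f}$. Given any $X'' = (\phi_1'',\dots,\phi_d'')$, the commutativity squares recursively force a morphism $p = (p_1,\dots,p_d)\colon \mc P_1 \to X''$ to satisfy $p_k = \phi_k''\phi_{k+1}''\cdots\phi_d''\,p_1$ for $k \ne 1$, while the square at index $1$ reduces, via Lemma \ref{thm:shifts_of_X}, to the tautology $f\cdot p_1 = f\cdot p_1$. Hence $\Hom_{\MatFac{S}{d}{f}}(\mc P_1, X'')$ is in natural bijection with $F_1''$ via $p \mapsto p_1(1)$. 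A dual computation shows that $\Hom_{\MatFac{S}{d}{f}}(X', \mc P_1)$ is in natural bijection with $\Hom_S(F_2', S)$ via $p \mapsto p_2$, with the remaining components forced by $p_k = p_2\,\phi_2'\cdots\phi_{k-1}'$ for $k \ne 2$.

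Projectivity of $\mc P_1$ is then immediate. Given an admissible epimorphism $\pi\colon X \twoheadrightarrow X''$ (whose components are surjections by Lemma \ref{thm:admiss_morphs_form}) and a morphism $p\colon \mc P_1 \to X''$ corresponding to $x_1 \in F_1''$, I would lift $x_1$ to some $y_1 \in F_1$ along $\pi_1$ and take $q$ to be the morphism $\mc P_1 \to X$ corresponding to $y_1$; the identities $\pi_k\phi_k = \phi_k''\pi_{k+1}$, iterated around the cycle, then force $\pi\circ q = p$ componentwise.

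Injectivity is dual: given an admissible monomorphism $\iota\colon X' \rightarrowtail X$ (with split components, again by Lemma \ref{thm:admiss_morphs_form}) and a morphism $p\colon X' \to \mc P_1$ corresponding to $p_2\colon F_2' \to S$, I would choose a retraction $s_2\colon F_2 \to F_2'$ of $\iota_2$ and let $q$ be the morphism $X \to \mc P_1$ corresponding to $p_2 \circ s_2$; telescoping the squares $\phi_k\iota_{k+1} = \iota_k\phi_k'$ then yields $q\circ \iota = p$ componentwise. There is no substantive obstacle here; the entire argument is the $d$-factor analogue of the familiar observation that $(f,1)$ and $(1,f)$ are projective-injective in $\MatFac{S}{2}{f}$, and the only bookkeeping hazard is tracking the cyclic indices correctly.
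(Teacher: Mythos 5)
Your argument is correct and follows the same route as the paper: reduce to $\mc P_1$ via the shift functor, observe that a morphism out of $\mc P_1$ is freely determined by its first component (lift it along $\pi_1$) and a morphism into $\mc P_1$ is freely determined by its second component (retract it along a splitting of $\iota_2$), then check the equality of morphisms by comparing the distinguished component. Packaging this as the statement that $\mc P_1$ represents $X \mapsto F_1$ and corepresents $X \mapsto \Hom_S(F_2, S)$ is a cleaner framing than the paper's explicit formulas, but the underlying computation is identical.
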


\begin{proof}
Directly from the definition we see that $T^j(\mc P_i) = \mc P_{i-j}$ for any $i,j \in \Z_d$. Therefore, because of Lemma \ref{thm:shift_of_projs}, it suffices to show that $\mc P_1$ is both projective and injective. We shall start by showing that $\mc P_1$ is projective.

Suppose $\alpha=(\alpha_1,\dots,\alpha_d): X \to X''$ is an admissible epimorphism and $p=(p_1,\dots,p_d): \mc P_1 \to X''$ is an arbitrary morphism. We would like to complete the diagram 
\begin{equation}\label{diagram:proj_P_1}\begin{tikzcd}
&\mc P_1\dar{p} \dlar[dotted,swap]{q}\\
X \rar[two heads]{\alpha}&X''
\end{tikzcd}\end{equation} One component of this diagram is the following diagram of free $S$-modules
\[\begin{tikzcd}
&S \dlar[dotted,swap]{q_1} \dar{p_1}\\
F_1 \rar{\alpha_1}&F_1'' \rar &0.
\end{tikzcd}\]
Since $S$ is free and $\alpha_1$ is surjective, there exists a map $q_1: S \to F_1$ such that $\alpha_1q_1 = p_1$. We can use this map to construct a morphism of matrix factorizations $\mc P_1 \to X$ which makes \eqref{diagram:proj_P_1} commute. Let $q = (q_1,\phi_2\phi_3\cdots\phi_d q_1, \phi_3\cdots\phi_d q_1,\dots, \phi_d q_1)$. The fact that $q$ forms a morphism $\mc P_1 \to X$ can be seen in the following diagram of $S$-modules
\[\begin{tikzcd}[column sep = large]
S \rar{1} \dar{q_1} &S \rar{1} \dar{\phi_d q_1} &\cdots \rar{1} &S \rar{1} \dar{\phi_3\cdots\phi_d q_1} &S \rar{f} \dar{\phi_2\phi_3\cdots\phi_d q_1} &S \dar{q_1}\\
F_1 \rar{\phi_d} &F_d \rar{\phi_{d-1}} &\cdots \rar{\phi_3} &F_3 \rar{\phi_2} &F_2 \rar{\phi_1} &F_1.
\end{tikzcd}\]

Finally, 
\[\begin{split}
    \alpha q &= (\alpha_1q_1, \alpha_2\phi_2\cdots\phi_dq_1,\dots,\alpha_d\phi_dq_1)\\
             &= (p_1, \phi_2''\cdots\phi_d''\alpha_1q_1, \cdots, \phi_d''\alpha_1q_1)\\
             &= (p_1, \phi_2''\cdots \phi_d''p_1,\cdots,\phi_d''p_1)\\
             &= (p_1,p_2,\dots,p_d)\\
             &=p
\end{split}\] which implies that $\mc P_1$ is projective.

In order to show that $\mc P_1$ is an injective matrix factorization, let $\beta = (\beta_1,\dots,\beta_d): X' \rightarrowtail X$ be an admissible monomorphism and $a=(a_1,\dots,a_d): X' \to \mc P_1$ be an arbitrary morphism. We would like to complete the diagram
\[\begin{tikzcd}
&X' \dar{a}\rar[tail]{\beta} & X \dlar[dotted]{b}\\
&\mc P_1&
\end{tikzcd}\]Since $\beta$ is an admissible monomorphism, each component $\beta_k$ is split. In particular, there exists a map $t: F_2' \to F_2$ such that $t\beta_2 = 1_{F_2}$. This splitting allows us to build the morphism \[b = (a_2t\phi_2'\cdots\phi_d',a_2t,a_2t\phi_2',\dots,a_2t\phi_2'\cdots\phi_{d-1}'): X \to \mc P_1\] and this morphism satisfies $b\beta = a$. 
\end{proof}

In Lemma \ref{thm:only_indecomp_projs}, we will see that $\mc P_1,\mc P_2,\dots, \mc P_d$ are the only indecomposable projective (respectively injective) matrix factorizations up to isomorphism.

The next step is to show that $\MatFac{S}{d}{f}$ has enough projectives and enough injectives. Along the way, we construct the syzygy and cosyzygy of a matrix factorization and give explicit formulas for each.

\begin{remark}
Let $X \in \MatFac{S}{d}{f}$. By tensoring with $R = S/(f)$, which we denote here by $\overline{\square} = \square \otimes_S R$, one can associate to $X$ an infinite chain of free $R$-modules:
\[\begin{tikzcd}
\cdots \rar{\overline \phi_{2}} &\overline F_2 \rar{\overline\phi_1} &\overline F_1 \rar{\overline\phi_d} &\overline F_d \rar{\overline\phi_{d-1}} &\cdots \rar{\overline \phi_2} &\overline F_2 \rar{\overline \phi_1} &\overline F_1 \rar{\overline \phi_d} &\cdots
\end{tikzcd}\] In the case $d=2$, this chain is a acyclic and, by truncating appropriately, it forms a free resolution over $R$ of $\cok\phi_1$ (or of $\cok\phi_2$). However, if $d>2$, this chain is not acyclic. In fact, it is not even a complex. Instead, it is precisely an \textit{acyclic} $d$-\textit{complex} (see \cite{iyama_derived_2017}). With this perspective in mind, it is likely that the formulas given below can be obtained as lifted versions of the ones found in \cite[Section 2]{iyama_derived_2017}. We give explicit constructions without referencing any such lifting.
\end{remark}

\begin{construction}\label{construction:enough}
Let $X = (\phi_1:F_2\to F_1,\dots,\phi_d:F_1\to F_d) \in \MatFac{S}{d}{f}$ be of size $n$. We construct short exact sequences \[\begin{tikzcd}
K \rar[tail] &P \rar[two heads] &X
\end{tikzcd} \text{ and } \begin{tikzcd}
X \rar[tail] &I \rar[two heads] &K'
\end{tikzcd}\] with $P$ projective and $I$ injective and give explicit formula for the resulting syzygy and the cosyzygy of $X$.

To start, set $\widehat F_k = \bigoplus_{i=1}^{d-1}F_{k+i}$. For each $k \in \Z_d$, define $S$-homomorphisms $D_k: F_{k+1}\oplus \widehat F_{k+1} \to F_k \oplus \widehat F_k$ and $D_k': F_{k+1}\oplus \widehat F_{k+1} \to F_k \oplus \widehat F_k$ by
\[D_k(a_{k+1},a_{k+2},\dots,a_{k-1},a_k) = (fa_k,a_{k+1},\dots,a_{k-1})\] and
\[D_k'(a_{k+1},a_{k+2},\dots,a_{k-1},a_k) = (a_k,fa_{k+1},a_{k+2},\dots,a_{k-1})\]
for all $a_i \in F_i, i \in \Z_d$. Set $P(X) = (D_1,D_2,\dots,D_d)$ and $I(X) = (D_1',D_2',\dots,D_d')$. Then, the $d$-tuples $P(X)$ and $I(X)$ form matrix factorizations of $f$ both isomorphic to $\bigoplus_{i=1}^{d} \mc P_i^n$.

For each $i,k \in \Z_d$, define a homomorphism $\theta_{ki}^X: F_i \to F_k$ given by
\[\theta_{ki}^X =\begin{cases}
1_{F_k} & i=k\\
\phi_k\phi_{k+1}\cdots\phi_{i-2}\phi_{i-1} & i\neq k.\\
\end{cases}\]
Then, for each $k \in \Z_d$, define $\Theta_k^X: \widehat F_k \to F_k$ and $\Xi_k^X: F_k \to \widehat F_k$ by 
\[\Theta_k^X(a_{k+1},a_{k+2},\dots,a_{k-1}) = \sum_{i\neq k}\theta_{ki}^X(a_i)\]
and 
\[\Xi_k^X(a_k) = \left(\theta_{(k+1)k}^X(a_k),\theta_{(k+2)k}^X(a_k),\dots,\theta_{(k-1)k}^X(a_k)\right).\]
Let $k \in \Z_d$ and consider the following diagram:
\begin{equation}\label{diagram:building_enough_projs}\begin{tikzcd}
0 \rar &\widehat F_{k+1} \dar[dotted]{\Omega_k}\rar{\epsilon_{k+1}^X} &F_{k+1}\oplus \widehat F_{k+1} \dar{D_k} \rar{\rho_{k+1}^X} &F_{k+1} \dar{\phi_k}\rar &0\\
0 \rar &\widehat F_{k} \rar{\epsilon_{k}^X} &F_{k}\oplus \widehat F_{k} \rar{\rho_{k}^X} &F_{k} \rar &0
\end{tikzcd}\end{equation} where $\rho_k^X = \begin{pmatrix}1_{F_k} &\Theta_k^X \end{pmatrix}$ and $\epsilon_k^X = \begin{pmatrix} -\Theta_k^X\\ 1_{\widehat F_k}\end{pmatrix}$. The rows are split exact sequences of free $S$-modules and one can check that right most square commutes by recalling that $\phi_k\theta_{(k+1)i} = \theta_{ki}$ for $i\neq k$ and $\phi_k\theta_{(k+1)k} = f\cdot 1_{F_k}$. Thus, there is an induced map $\Omega_k: \widehat F_{k+1} \to \widehat F_k$ as depicted. Since the rows are split, $\Omega_k$ can be computed by using the splitting, that is, $\Omega_k = \pi_kD_k\epsilon_{k+1}$ where $\pi_k: F_k \oplus \widehat F_k \to \widehat F_k$ is projection onto $\widehat F_k$. In particular, 
\[\begin{split}
    \Omega_k(a_{k+2},a_{k+3},\dots,a_{k-1},a_k) &= \pi_kD_k\epsilon_{k+1}(a_{k+2},a_{k+3},\dots,a_{k-1},a_k)\\
    &= \pi_kD_k\left(-\sum_{i\neq k+1}\theta_{(k+1)i}(a_i),a_{k+2},\dots,a_{k-1},a_k\right)\\
    &=\pi_k\left(fa_k,-\sum_{i\neq k+1}\theta_{(k+1)i}(a_i),a_{k+2},\dots,a_{k-1}\right)\\
    &=\left(-\sum_{i\neq k+1}\theta_{(k+1)i}(a_i),a_{k+2},\dots,a_{k-1}\right)
\end{split}\]and therefore we can represent the components of $\Omega_k$ as 
\[\Omega_{k} =\begin{pmatrix}
-\theta_{(k+1)(k+2)} &  -\theta_{(k+1)(k+3)} & -\theta_{(k+1)(k+4)}   & \ldots &-\theta_{(k+1)(k-1)} & -\theta_{(k+1)(k)}\\
1_{F_{k+2}} &  0 & 0 & \ldots & 0& 0\\
0 & 1_{F_{k+3}} & 0  &\ldots &0 &0\\
\vdots & \vdots &1_{F_{k+4}} &\ddots  &\vdots & \vdots \\
0  &   0 &0 &\ddots &0 & 0\\
0 & 0 &0 &\cdots &1_{F_{k-1}}& 0\\
\end{pmatrix}.\]
Since $k$ was arbitrary, we have a $d$-tuple $(\Omega_1,\Omega_2,\dots,\Omega_d)$ which has the property that
\[\Omega_1\Omega_2\cdots\Omega_d = \pi_1D_1D_2\cdots D_d \epsilon_1 = f\pi_1\epsilon_1 = f \cdot 1_{\widehat F_1}.\]
Since the free $S$-modules $\widehat F_1,\widehat F_2,\dots,\widehat F_d$ are all of the same rank, it follows that $(\Omega_1,\dots,\Omega_d) \in \MatFac{S}{d}{f}$. We denote this matrix factorization by $\Omega_{\MatFac{S}{d}{f}}(X)$ and refer to it as the \textit{syzygy} of $X$. Combining the diagrams \eqref{diagram:building_enough_projs} for all $k \in \Z_d$ we have a short exact sequence
\begin{equation}\label{equation:enough_proj}\begin{tikzcd}\Omega_{\MatFac{S}{d}{f}}(X) \rar[tail]{\epsilon} &P(X) \rar[two heads]{\rho} &X\end{tikzcd}\end{equation} where $\rho = (\rho_1,\rho_2,\dots,\rho_d)$ and $\epsilon = (\epsilon_1,\epsilon_2,\dots,\epsilon_d)$.

Similarly, we have a matrix factorization $\Omega^{-}_{\MatFac{S}{d}{f}}(X) = (\Omega_1^-,\Omega_2^-,\dots,\Omega_d^-)$, the \textit{cosyzygy} of $X$, and a short exact sequence of matrix factorizations
induced by the commutative diagrams 
\begin{equation}\label{diagram:building_enough_injs}\begin{tikzcd}
0 \rar &F_{k+1} \dar{\phi_k}\rar{\lambda_{k+1}^X} &F_{k+1}\oplus \widehat F_{k+1} \dar{D_k'}\rar{\eta_{k+1}^X} &\widehat F_{k+1} \dar[dotted]{\Omega_k^-}\rar &0\\
0 \rar &F_{k} \rar{\lambda_{k}^X} &F_{k}\oplus \widehat F_{k} \rar{\eta_{k}^X} &\widehat F_{k} \rar &0,
\end{tikzcd}\end{equation} for all $k \in \Z_d$, where $\eta_k^X = \begin{pmatrix} -\Xi_k^X & 1_{\widehat F_k}\end{pmatrix}$, $\lambda_k^X = \begin{pmatrix} 1_{F_k}\vspace{2mm}\\ \Xi_k^X\end{pmatrix}$. The induced short exact sequence is
\begin{equation}\label{equation:enough_inj}\begin{tikzcd}
X \rar[tail]{\lambda} &I(X) \rar[two heads]{\eta} &\Omega_{\MatFac{S}{d}{f}}^-(X),
\end{tikzcd}\end{equation} where $\eta = (\eta_1,\eta_2,\dots,\eta_d)$ and $\lambda=(\lambda_1,\lambda_2,\dots,\lambda_d)$.
Finally, we can represent the components of $\Omega_k^-$ by
\[\Omega_k^{-}=\begin{pmatrix}
0           &0           &\cdots &0           &0           &-\theta_{(k+1)k}\\
1_{F_{k+2}} &0           &\cdots &0           &0           &-\theta_{(k+2)k}\\
0           &1_{F_{k+3}} &\ddots &0           &0           &-\theta_{(k+3)k}\\
\vdots      &\vdots      &\ddots &\vdots      &\vdots           &\vdots\\
0           &0           &\cdots &1_{F_{k-2}} &0           &-\theta_{(k-2)k}\\
0           &0           &\cdots &0           &1_{F_{k-1}} &-\theta_{(k-1)k}
\end{pmatrix}.\]
\end{construction}

\begin{example}\label{example:d=2_3}
Let $(\phi,\psi) \in \MatFac{S}{2}{f}$. Then,
\[\Omega_{\MatFac{S}{2}{f}}(\phi,\psi) = (-\psi,-\phi) \cong (\psi,\phi)\]
and
\[\Omega^{-}_{\MatFac{S}{2}{f}}(\phi,\psi) = (-\psi,-\phi) \cong (\psi,\phi).\]
In particular, both $\Omega_{\MatFac{S}{2}{f}}(-)$ and $\Omega^{-}_{\MatFac{S}{2}{f}}(-)$ are isomorphic to the shift functor when $d=2$. In this case, $(\phi,\psi)$ is reduced if and only if $\Omega_{\MatFac{S}{2}{f}}(\phi,\psi)$ (respectively $\Omega_{\MatFac{S}{2}{f}}^-(\phi,\psi)$) is reduced. However, for $X \in \MatFac{S}{d}{f}$ with $d>2$, neither $\Omega_{\MatFac{S}{2}{f}}(X)$ nor $\Omega_{\MatFac{S}{2}{f}}^-(X)$ will be reduced. For instance, if $X= (\phi_1:F_2\to F_1,\phi_2: F_3\to F_2,\phi_3: F_1\to F_3) \in \MatFac{S}{3}{f}$ is of size $n$, then
\[\Omega_{\MatFac{S}{3}{f}}(X) = 
\left(\begin{pmatrix}
-\phi_2 & -\phi_2\phi_3\\
1_{F_3} &0
\end{pmatrix},\begin{pmatrix}
-\phi_3 & -\phi_3\phi_1\\
1_{F_1} &0
\end{pmatrix},\begin{pmatrix}
-\phi_1 & -\phi_1\phi_2\\
1_{F_2} &0
\end{pmatrix}\right)\] and
\[\Omega^{-}_{\MatFac{S}{3}{f}}(X) = 
\left(\begin{pmatrix}
0 & -\phi_2\phi_3\\
1_{F_3} &-\phi_3
\end{pmatrix},\begin{pmatrix}
0 & -\phi_3\phi_1\\
1_{F_1} &-\phi_1
\end{pmatrix},\begin{pmatrix}
0 & -\phi_1\phi_2\\
1_{F_2} &-\phi_2
\end{pmatrix}\right)\]
which are of size $2n$.

Another observation that can be made from these formulas is that, for each $k \in \Z_3$, we have an isomorphism of $R$-modules \[\cok \begin{pmatrix}
-\phi_{k+1} & -\phi_{k+1}\phi_{k+2}\\
1_{F_{k+2}} &0\\
\end{pmatrix} \cong \cok(\phi_{k+1}\phi_{k+2}) \cong \syz_R^1(\cok\phi_k) \oplus R^{m_k}\] for some $m_k \ge 0$. A similar statement is true for $\Omega_{\MatFac{S}{3}{f}}^-(X)$ and more generally we have the following proposition.
\end{example}

\begin{prop}\label{thm:cok_Omega_k}
Let $X \in \MatFac{S}{d}{f}$ be of size $n$. Let $\Omega_{\MatFac{S}{d}{f}}(X)$ and $\Omega^{-}_{\MatFac{S}{d}{f}}(X)$ be the matrix factorizations constructed in \ref{construction:enough}. Then, for each $k \in \Z_d$,
\[\cok(\Omega_k) \cong \syz_R^1(\cok\phi_k) \oplus R^{m_k} \cong \cok(\Omega_k^{-})\] where $m_k = n - \mu_R(\cok\phi_k)$.
\end{prop}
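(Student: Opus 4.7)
The plan is to reduce to the $d=2$ case by first establishing $\cok\Omega_k \cong \cok\Omega_k^- \cong \cok(\theta_{(k+1)k})$ as $R$-modules, where $\theta_{(k+1)k} = \phi_{k+1}\phi_{k+2}\cdots\phi_{k-1}$. By Lemma \ref{thm:shifts_of_X}\ref{thm:shifts_of_X:1}, the pair $(\phi_k, \theta_{(k+1)k})$ is a matrix factorization of $f$ with two factors, so the classical theory of \ref{thm:Eisenbud_d=2} together with a standard projective-cover argument will finish the job.

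For the first step, I would inspect the block matrices for $\Omega_k$ and $\Omega_k^-$ displayed in Construction \ref{construction:enough} and compute the cokernels directly over $R$. In $\Omega_k$, each of the first $d-2$ columns contains a sub-diagonal identity block $1_{F_{k+j+1}}$ paired with a top entry $-\theta_{(k+1)(k+j+1)}$; the resulting relations let every class of $\cok\Omega_k$ be represented in the first summand $\overline{F_{k+1}}$. The final column $(-\theta_{(k+1)k},\,0,\dots,0)^{\T}$ then contributes the single remaining relation modding out by $\Ima(\overline{\theta_{(k+1)k}})$, yielding $\cok\Omega_k \cong \overline{F_{k+1}}/\Ima(\overline{\theta_{(k+1)k}}) = \cok(\theta_{(k+1)k})$. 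For $\Omega_k^-$ the argument is even more direct: the sub-diagonal identity blocks annihilate the summands $\overline{F_{k+2}},\dots,\overline{F_{k-1}}$ outright in the cokernel, and the last column then imposes exactly the relation $\overline{\theta_{(k+1)k}}(a_k) \equiv 0$ on the surviving $\overline{F_{k+1}}$ component.

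The second step rests on the following observation for $(\phi,\psi) \in \MatFac{S}{2}{f}$ of size $n$: one has $\cok\psi \cong \syz_R^1(\cok\phi) \oplus R^{n - \mu_R(\cok\phi)}$. Indeed, $\cok\psi \cong \Ima(\overline{\phi}) = \ker(\overline{F_k} \twoheadrightarrow \cok\phi)$, using that $\ker\overline\phi = \Ima\overline\psi$ (a consequence of $S$-injectivity of $\phi$ combined with the factorization relation). Then the standard projective-cover argument over the local ring $R$ — any free presentation $R^n \twoheadrightarrow M$ admits a change of basis so that $R^n \cong R^{\mu_R(M)} \oplus R^{n - \mu_R(M)}$ with $R^{\mu_R(M)} \twoheadrightarrow M$ a minimal presentation and $R^{n - \mu_R(M)} \to M$ zero — shows this kernel splits as $\syz_R^1(M) \oplus R^{n - \mu_R(M)}$. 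Applying this to the 2-factor matrix factorization $(\phi_k, \theta_{(k+1)k})$ and combining with Step 1 yields the claimed formula with $m_k = n - \mu_R(\cok\phi_k)$.

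The main obstacle is the bookkeeping in Step 1: organizing the relations from the $(d-1)\times(d-1)$ block matrix cleanly so that the reduction to the first component $\overline{F_{k+1}}$ and the final quotient by $\Ima(\overline{\theta_{(k+1)k}})$ are both transparent. Step 2 is then a routine application of the classical $d=2$ theory.
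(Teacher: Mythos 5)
Your proof is correct and follows essentially the same route as the paper's, which (after a brief alternative via a $3\times 3$ diagram chase for $\cok\Omega_k$) reads $\cok\Omega_k \cong \cok\theta_{(k+1)k}^X \cong \cok\Omega_k^-$ directly off the explicit block formulas and then invokes the two-factor matrix factorization $(\phi_k,\theta_{(k+1)k})$ together with uniqueness of minimal free resolutions over $R$ to split off the free part. Your elaboration of the reduction from $\cok\psi$ to $\syz_R^1(\cok\phi)\oplus R^{n-\mu_R(\cok\phi)}$ via $\Ima\overline\phi = \ker(\overline F \to \cok\phi)$ and a projective-cover/Schanuel argument is exactly the ``well-known fact'' the paper leaves implicit; note only that this is \emph{not} a consequence of Proposition~\ref{thm:Eisenbud_d=2}\ref{thm:Eisenbud_d=2:reduced} as you initially suggest (that statement assumes the factorization is reduced, which $(\phi_k,\theta_{(k+1)k})$ generally is not), but your subsequent self-contained argument correctly handles the non-reduced case.
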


\begin{proof}
Let $k \in \Z_d$. The diagram \eqref{diagram:building_enough_projs} induces the diagram
\[\begin{tikzcd}
&&0\dar&0\dar&0\dar\\
&0 \rar &\widehat F_{k+1} \dar \rar{\Omega_k} &\widehat F_k \dar \rar &\cok\Omega_k \dar \rar &0\\
&0 \rar  &F_{k+1}\oplus \widehat F_{k+1} \dar \rar{D_k} &F_k \oplus \widehat F_k \dar \rar &F_k/fF_k \dar \rar &0\\
&0 \rar &F_{k+1} \dar \rar{\phi_k} &F_k \dar \rar &\cok\phi_k \dar\rar &0\\
&&0&0&0
\end{tikzcd}\] with exact rows and columns. The right most column displays $\cok\Omega_k$ as a (not necessarily reduced) syzygy of $\cok\phi_k$ over $R$ as desired. 

Alternatively, we can see from the explicit formulas for $\Omega_k$ and $\Omega_k^-$ that 
\[\cok\Omega_k \cong \cok\theta_{(k+1)k}^X \cong \cok\Omega_k^-.\] Now, recall that $\theta_{(k+1)k} = \phi_{k+1}\phi_{k+2}\cdots\phi_{k-1}$. Since $(\phi_k,\theta_{(k+1)k})$ is a matrix factorization with 2 factors, we have that $\cok(\theta_{(k+1)k})\cong \syz_R^1(\cok\phi_k)\oplus R^{m_k}$ for some $m_k \ge 0$. In particular, $m_k = n - \mu_R(\cok\phi_k)$ by the uniqueness of minimal free resolutions over $R$.
\end{proof}

Returning to our original goal, we note that since the matrix factorizations $P(X) \cong I(X) \cong \bigoplus_{i=1}^d \mc P_i^n$ are projective and injective by Lemma \ref{thm:P_i_proj_inj}, the sequences \eqref{equation:enough_proj} and \eqref{equation:enough_inj} imply that $\MatFac{S}{d}{f}$ has enough projectives and enough injectives. Additionally, we have the following.

\begin{prop}\label{thm:proj_iff_inj}
An object $X \in \MatFac{S}{d}{f}$ is projective if and only if it is injective. 
\end{prop}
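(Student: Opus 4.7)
The plan is to exploit the fact that the objects $P(X)$ and $I(X)$ from Construction \ref{construction:enough} are simultaneously projective and injective, since both are isomorphic to $\bigoplus_{i=1}^d \mc P_i^n$ and each $\mc P_i$ enjoys both properties by Lemma \ref{thm:P_i_proj_inj}. Together with the two canonical short exact sequences \eqref{equation:enough_proj} and \eqref{equation:enough_inj}, this reduces the proposition to the observation that retracts of projective (resp.\ injective) objects in any exact category are again projective (resp.\ injective).

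For the forward direction, assume $X$ is projective. Applying the lifting property of $X$ to $1_X : X \to X$ along the admissible epimorphism $\rho : P(X) \twoheadrightarrow X$ produced in \eqref{equation:enough_proj}, I obtain a morphism $s : X \to P(X)$ with $\rho s = 1_X$. Hence $X$ is a direct summand of $P(X)$, which is injective. Since any retract of an injective object is injective (given an admissible monomorphism $i : A \rightarrowtail B$ and a morphism $f : A \to X$, lift $s \circ f : A \to P(X)$ across $i$ using injectivity of $P(X)$, then postcompose with the retraction $\rho$), it follows that $X$ is injective.

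The reverse implication is dual: if $X$ is injective, extend $1_X : X \to X$ along the admissible monomorphism $\lambda : X \rightarrowtail I(X)$ from \eqref{equation:enough_inj} to obtain a retraction $r : I(X) \to X$ with $r \lambda = 1_X$. This realizes $X$ as a direct summand of the projective object $I(X)$, and the analogous retract argument then shows $X$ is projective.

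No serious obstacle is expected; the entire argument is essentially formal once Lemma \ref{thm:P_i_proj_inj} and Construction \ref{construction:enough} are in hand. The only mild care needed is to verify the retract closure for projectivity and injectivity in an exact category, but this is a standard diagram chase that does not rely on any special features of $\MatFac{S}{d}{f}$.
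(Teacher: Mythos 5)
Your proof is correct and takes essentially the same route as the paper's: use Lemma \ref{thm:P_i_proj_inj} to see that $P(X)$ and $I(X)$ are simultaneously projective and injective, then realize $X$ as a retract of $P(X)$ (when $X$ is projective) or of $I(X)$ (when $X$ is injective) by splitting the canonical short exact sequences \eqref{equation:enough_proj} and \eqref{equation:enough_inj}. The paper's version is simply terser, leaving the lifting step and the retract-closure argument implicit.
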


\begin{proof}
The matrix factorizations $P(X)$ and $I(X)$ are both projective and injective by Lemma \ref{thm:P_i_proj_inj}. Let $X \in \MatFac{S}{d}{f}$. If $X$ is projective, \eqref{equation:enough_proj} implies that $X$ is a summand of the injective matrix factorization $P(X)$ and therefore is injective. Conversely, if $X$ is injective, \eqref{equation:enough_inj} implies it is a summand of the projective $I(X)$.
\end{proof}

We have therefore established our original goal of this section.

\begin{thm}\label{thm:MFd_Frob}
The category $\MatFac{S}{d}{f}$ is a Frobenius category. \qed
\end{thm}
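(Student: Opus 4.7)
The plan is simply to assemble the pieces already in place. By definition, to promote the exact category $(\MatFac{S}{d}{f}, \mc E_d)$ to a Frobenius category, I need exactly three things beyond the exact structure: enough projectives, enough injectives, and the coincidence of the two classes. Each of these has been verified in a preceding result, so the proof is essentially a citation.

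First I would invoke the short exact sequence \eqref{equation:enough_proj} from Construction \ref{construction:enough}, which for any $X \in \MatFac{S}{d}{f}$ of size $n$ exhibits an admissible epimorphism $P(X) \twoheadrightarrow X$. Since $P(X) \cong \bigoplus_{i=1}^d \mc P_i^n$ and each $\mc P_i$ is projective by Lemma \ref{thm:P_i_proj_inj} (and projectivity is closed under finite direct sums in any exact category), this shows $\MatFac{S}{d}{f}$ has enough projectives. Dually, the short exact sequence \eqref{equation:enough_inj} gives an admissible monomorphism $X \rightarrowtail I(X)$ with $I(X) \cong \bigoplus_{i=1}^d \mc P_i^n$ injective, by the same lemma, so $\MatFac{S}{d}{f}$ has enough injectives.

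Finally, Proposition \ref{thm:proj_iff_inj} asserts that an object of $\MatFac{S}{d}{f}$ is projective if and only if it is injective, so the two classes coincide. Combining these three facts with the exact structure established in the preceding corollary yields the theorem. There is no real obstacle at this stage: all of the substantive work — building the functorial syzygy and cosyzygy, checking that $P(X)$ and $I(X)$ are isomorphic to $\bigoplus \mc P_i^n$, verifying that $\mc P_i$ is both projective and injective, and the characterization of admissible monomorphisms and epimorphisms used to check that \eqref{equation:enough_proj} and \eqref{equation:enough_inj} truly lie in $\mc E_d$ — was front-loaded into the earlier construction and lemmas.
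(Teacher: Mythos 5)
Your proposal matches the paper's own argument exactly: the paper assembles Theorem \ref{thm:MFd_Frob} from the short exact sequences \eqref{equation:enough_proj} and \eqref{equation:enough_inj}, the fact that $P(X) \cong I(X) \cong \bigoplus_{i=1}^d \mc P_i^n$ is projective and injective by Lemma \ref{thm:P_i_proj_inj}, and Proposition \ref{thm:proj_iff_inj}. Nothing is missing, and there is no substantive difference in approach.
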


For matrix factorizations $X,X' \in \MatFac{S}{d}{f}$, let $I(X,X')$ denote the set of morphisms $X$ to $X'$ that factor through an injective matrix factorization. The stable category $\underline{\textup{MF}}_S^d(f)$ is formed by taking the same objects as $\MatFac{S}{d}{f}$ and morphisms given by the quotient \[\Hom_{\underline{\textup{MF}}_S^d(f)}(X,X') =\Hom_{\MatFac{S}{d}{f}}(X,X')/I(X,X').\] A consequence of Theorem \ref{thm:MFd_Frob} is that $\underline{\textup{MF}}_S^d(f)$ carries the structure of a triangulated category with suspension functor given by $\Omega_{\MatFac{S}{d}{f}}^{-}(-)$ \cite{happel_triangulated_1988}. We call a morphism $(\alpha_1,\alpha_2,\dots,\alpha_d): X \to X'$ \textit{null-homotopic} if there exist $S$-homomorphisms $s_j : F_j \to F_{j-1}'$, $j \in \Z_d$, such that
\[\alpha_i = \sum_{k \in \Z_d}\theta_{i(i-k)}^{X'}s_{i-k+1}\theta_{(i-k+1)i}^X\] for each $i \in \Z_d$. We denote by $\textup{HMF}_S^d(f)$ the \textit{homotopy category of matrix factorizations} which has the same objects as $\MatFac{S}{d}{f}$ and, for any $X,X' \in \MatFac{S}{d}{f}$, has morphisms
\[\Hom_{\textup{HMF}_S^d(f)}(X,X') = \Hom_{\MatFac{S}{d}{f}}(X,X')/\sim,\] where $\sim$ is the equivalence relation $\alpha \sim \alpha'$ if and only if $\alpha-\alpha'$ is null-homotopic.

\begin{prop}\label{thm:stable_homotopy}
The stable category $\underline{\textup{MF}}_S^d(f)$ and the homotopy category $\textup{HMF}_S^d(f)$ coincide.
\end{prop}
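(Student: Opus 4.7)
The plan is to check directly that the two hom-quotients coincide. Both categories have the same objects, so it suffices to show that a morphism $\alpha = (\alpha_1,\ldots,\alpha_d) : X \to X'$ in $\MatFac{S}{d}{f}$ factors through an injective matrix factorization if and only if it is null-homotopic in the sense described above. Because $I(X) \cong \bigoplus_{i \in \Z_d} \mc P_i^n$ is injective by Lemma \ref{thm:P_i_proj_inj}, and because any morphism from $X$ into an injective extends through the admissible monomorphism $\lambda : X \rightarrowtail I(X)$ from Construction \ref{construction:enough}, every morphism factoring through an injective already factors through $I(X)$. By additivity of $\Hom$, this reduces the problem to finite sums of morphisms each factoring through a single $\mc P_i$.

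The next step is to classify $\Hom_{\MatFac{S}{d}{f}}(X,\mc P_i)$ and $\Hom_{\MatFac{S}{d}{f}}(\mc P_i,X')$ explicitly. For $\gamma : X \to \mc P_i$, the commutativity constraints read $\gamma_k \phi_k = \gamma_{k+1}$ for $k \ne i$ and $\gamma_i \phi_i = f \gamma_{i+1}$. Iterating the first relation starting from $\gamma_{i+1}$ forces $\gamma_k = \gamma_{i+1}\theta_{(i+1)k}^X$ for all $k \in \Z_d$, and the remaining relation at $k = i$ is automatic by Lemma \ref{thm:shifts_of_X}\ref{thm:shifts_of_X:1}. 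Hence $\gamma \mapsto \gamma_{i+1}$ identifies $\Hom_{\MatFac{S}{d}{f}}(X,\mc P_i) \cong \Hom_S(F_{i+1},S)$, and every element of $\Hom_S(F_{i+1},S)$ arises this way. A dual argument shows that $\beta \mapsto \beta_i$ identifies $\Hom_{\MatFac{S}{d}{f}}(\mc P_i,X') \cong F_i'$, with $\beta_k = \theta_{ki}^{X'}\beta_i$ for every $k \in \Z_d$ (reading $\theta_{ii}^{X'} = 1$).

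With these identifications in hand, a composition $X \xrightarrow{\gamma} \mc P_i \xrightarrow{\beta} X'$ has $k$-th component
\[
(\beta\gamma)_k \;=\; \beta_k \gamma_k \;=\; \theta_{ki}^{X'} \,(\beta_i \gamma_{i+1})\, \theta_{(i+1)k}^X,
\]
so the data of a factorization through $\mc P_i$ is precisely that of a single $S$-linear map $F_{i+1} \to F_i'$. Writing $s_{i+1}$ for the cumulative map coming from the $\mc P_i$-summand and summing over $i \in \Z_d$, a morphism $\alpha$ factors through some $\bigoplus_i \mc P_i^{n_i}$ if and only if
\[
\alpha_k \;=\; \sum_{i \in \Z_d} \theta_{ki}^{X'}\, s_{i+1}\, \theta_{(i+1)k}^X
\]
for some $S$-maps $s_j : F_j \to F_{j-1}'$, $j \in \Z_d$. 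The substitution $i \mapsto k - i$ (modulo $d$) converts this sum into exactly the null-homotopy formula stated in the definition, so the two defining ideals of morphisms coincide and the quotient categories $\underline{\textup{MF}}_S^d(f)$ and $\textup{HMF}_S^d(f)$ agree.

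The main obstacle is purely notational: one must carefully track the cyclic indexing conventions, check that the closure condition at $k = i$ for a morphism $X \to \mc P_i$ is automatic from Lemma \ref{thm:shifts_of_X}\ref{thm:shifts_of_X:1}, and reindex the summation so that the summands indexed by the injectives $\mc P_i$ match the summation variable $k$ appearing in the null-homotopy formula.
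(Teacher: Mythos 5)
Your proof is correct and takes essentially the same approach as the paper: both establish that a morphism is null-homotopic if and only if it factors through $\lambda^X : X \to I(X)$ by explicitly computing the relevant hom-sets. Your version decomposes $I(X) \cong \bigoplus_{i \in \Z_d}\mc P_i^n$ and classifies $\Hom(X,\mc P_i)$ and $\Hom(\mc P_i, X')$ one indecomposable injective at a time, whereas the paper writes out the block components of a morphism $I(X) \to X'$ directly; the two calculations are identical up to the reindexing you perform at the end.
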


\begin{proof}
Let $X,X' \in \MatFac{S}{d}{f}$. It suffices to show that a morphism $\alpha: X \to X'$ is null-homotopic if and only if it factors through the morphism $\lambda^X : X \to I(X)$. The proof relies on an explicit description of morphisms $I(X) \to X'$.
Indeed, if $\beta: I(X) \to X'$ is any morphism, then, for any $k,j \in \Z_d$ with $j\neq 1$, we have a commutative diagram
\[\begin{tikzcd}[font = \small]
F_{k+j-1}\oplus \widehat F_{k+j-1} \dar{\beta_{k+j-1}}\rar{D_{k+j-2}'} & F_{k+j-2}\oplus \widehat F_{k+j-2} \dar{\beta_{k+j-2}}\rar{D_{k+j-3}'} & \cdots \rar{D_{k+1}'} &F_{k+1}\oplus \widehat F_{k+1} \dar{\beta_{k+1}}\rar{D_k'} &F_{k}\oplus \widehat F_{k} \dar{\beta_{k}}\\
F_{k+j-1}' \rar{\phi_{k+j-2}'} &F_{k+j-2}' \rar{\phi_{k+j-3}'} &\cdots \rar{\phi_{k+1}'} &F_{k+1}' \rar{\phi_k'} &F_{k}'.
\end{tikzcd}\]
Write the components of $\beta_i$ as \[\beta_i = \begin{bmatrix}
\beta_{ii} &\beta_{i(i+1)} &\beta_{i(i+2)} &\cdots &\beta_{i(i-1)}
\end{bmatrix}\] for some $\beta_{i(i+s)}: F_{i+s}\to F_i'$. Notice that the maps $D_k',D_{k+1}',\dots,D_{k+j-2}'$ are the identity on $F_{k+j}$. Hence,
\[\beta_{k(k+j)} = \phi_k'\phi_{k+1}'\cdots\phi_{k+j-2}'\beta_{(k+j-1)(k+j)} = \theta^{X'}_{k(k+j-1)}\beta_{(k+j-1)(k+j)}\] by the commutativity of the outer most rectangle of the diagram above. Therefore,
\[\beta_k = \begin{bmatrix}\theta_{k(k-1)}^{X'}\beta_{(k-1)k} &\beta_{k(k+1)} &\theta_{k(k+1)}^{X'}\beta_{(k+1)(k+2)} &\cdots &\theta_{k(k-2)}^{X'}\beta_{(k-2)(k-1)}\end{bmatrix}.\]

Now, if $\beta: I(X) \to X'$ is such that $\beta\lambda^X = \alpha$, then 
\[\alpha_k = \beta_k\lambda_k^X = -\sum_{i \in \Z_d}\theta_{k(k-i)}^{X'}\beta_{(k-i)(k-i+1)}\theta_{(k-i+1)k}^X\] for any $k \in \Z_d$. This says precisely that $\alpha$ is null-homotopic via the maps $\{-\beta_{(j-1)j}\}_{j\in\Z_d}$.

Conversely, if $\alpha$ is null-homotopic via maps $s_j: F_j \to F_{j-1}'$, then it is not hard to see that the maps
\[\gamma_k = \begin{bmatrix}\theta_{k(k-1)}^{X'}s_k &s_{k+1} &\theta_{k(k+1)}^{X'}s_{k+2} &\cdots &\theta_{k(k-1)}^{X'}s_{k-1} \end{bmatrix},\] for $k \in \Z_d$, form a morphism $\gamma=(\gamma_1,\gamma_2,\dots,\gamma_d): I(X) \to X'$ such that $\alpha = \gamma\lambda^X$.
\end{proof}

To end this section, we give explicit formula for the mapping cone of a morphism. Suppose $\alpha: X \to X'$ is a morphism in $\MatFac{S}{d}{f}$. The \textit{mapping cone} of $\alpha$ is the matrix factorization  \(C(\alpha) = (\Delta_1,\Delta_2, \dots, \Delta_d)\) where
\[\Delta_k = \begin{pmatrix}
\phi_k' &0              &0           &\cdots &0         &\alpha_k\\
0       &0              &0           &\cdots &0         &-\theta_{(k+1)k}^X\\
0       &1_{F_{k+2}}    &0           &\cdots &0         &-\theta_{(k+2)k}^X\\
0       &0              &1_{F_{k+3}} &\ddots &0         &-\theta_{(k+3)k}^X\\
\vdots  &\vdots         &            &\ddots &\vdots    &\vdots\\
0       &0              &\cdots      &\cdots &1_{F_{k-1}}         &-\theta_{(k-1)k}^X\\
\end{pmatrix}: F_{k+1}' \oplus \widehat F_{k+1} \to F_k' \oplus \widehat F_k\] for all $k \in \Z_d$. The cone of $\alpha$ fits into a commutative diagram
\[\begin{tikzcd}
X \dar{\alpha} \rar[tail]{\lambda^X} &I(X) \dar{\beta} \rar[two heads]{\eta^X} &\Omega_{\MatFac{S}{d}{f}}^{-}(X) \dar[equals]\\
X' \rar[tail]{q} &C(\alpha) \rar[two heads]{p} &\Omega_{\MatFac{S}{d}{f}}^{-}(X)
\end{tikzcd}\] where $p_k = \begin{pmatrix} 0 & 1_{\widehat F_k}
\end{pmatrix}$, $q_k = \begin{pmatrix} 1_{F_k'} \\ 0 \end{pmatrix}$, and $\beta_k = \begin{pmatrix}\alpha_k & 0\\ -\Xi_k^X & 1_{\widehat F_k}\end{pmatrix}$ for all $k \in \Z_d$, and $p=(p_1,\dots,p_d), q=(q_1,\dots,q_d),$ and $\beta=(\beta_1,\dots,\beta_d)$.

\section{$\MatFac{S}{d}{f}$ as a category of modules}\label{section:KRS}
Let $(S,\mf n, \bf k)$ be a regular local ring, $f$ a non-zero non-unit in $S$, and $d\ge 2$ an integer. In this section, we will show that $\MatFac{S}{d}{f}$ is equivalent to the category of MCM modules over a certain non-commutative $S$-algebra which is finitely generated and free as an $S$-module. This extends a result of Solberg \cite[Proposition 1.3]{solberg_hypersurface_1989} for all $d \ge 2$. As a consequence we will conclude that, if $S$ is complete, the Krull-Remak-Schmidt Theorem (KRS) holds in $\MatFac{S}{d}{f}$.

Recall from Section \ref{section:Frobenius} the projective (equivalently injective) matrix factorizations $\mc P_1,\mc P_2,\dots,\mc P_d$, and their direct sum $\mc P=\bigoplus_{i \in \Z_d}\mc P_i.$ Our first step is to understand the homomorphisms from $\mc P_j$ to $\mc P_i$ for any $i,j \in \Z_d$. 

\begin{defi}
For $i\neq j \in \Z_d$, let $e_{ij}\in \Hom_S(S,S)^d$ denote the $d$-tuple of homomorphisms such that the $j+1,j+2,\dots,i-1,i$ components are multiplication by $f$ while the rest are the identity on $S$. For each $i \in \Z_d$, let $e_{ii} = 1_{\mc P_i}$, the identity on $\mc P_i$.
\end{defi}

For instance, if $i \in \Z_d$, the $i$th map in $e_{i(i-1)}$ is multiplication by $f$ and the rest are the identity on $S$ while $e_{(i-1)i}$ is of the form $(f,f,\dots,f,1,f,\dots,f,f)$, where only the $i$th component is the identity on $S$. 

\begin{lem}\label{thm:gens_of_gamma}
For all $i,j \in \Z_d$, $\Hom_{\MatFac{S}{d}{f}}(\mc P_j,\mc P_i)= S \cdot e_{ij}$. In particular, the morphisms from $\mc P_j$ to $\mc P_i$ form a free $S$-module of rank 1.
\end{lem}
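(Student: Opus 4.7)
The proof will be a direct unwinding of the commutativity squares defining a morphism of matrix factorizations between the two size-$1$ factorizations $\mc P_j$ and $\mc P_i$. Since each $\mc P_k$ has every free module equal to $S$, a tuple $\alpha=(\alpha_1,\dots,\alpha_d)$ of $S$-module homomorphisms $S\to S$ is just a $d$-tuple of elements $\alpha_k\in S$; it is a morphism $\mc P_j\to\mc P_i$ precisely when $\phi_k^{\mc P_i}\,\alpha_{k+1}=\alpha_k\,\phi_k^{\mc P_j}$ for every $k\in\Z_d$. Because the structure maps of $\mc P_i$ and $\mc P_j$ are all either $1$ or $f$, these relations come in just a few shapes. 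The plan is to read off all such $\alpha$ from the relations and compare to $e_{ij}$.

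First I would handle the easy case $i=j$. Every commutativity square then has the same map ($f$ or $1$) on both sides, and since $f$ is a nonzerodivisor on $S$, each relation reduces to $\alpha_{k+1}=\alpha_k$. Hence $\alpha_1=\alpha_2=\cdots=\alpha_d=s$ for a unique $s\in S$, which gives $\alpha=s\cdot 1_{\mc P_i}=s\cdot e_{ii}$.

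Next I would treat $i\neq j$. The relations now split into three types: for $k\neq i,j$ one has $\alpha_{k+1}=\alpha_k$; at $k=j$ one has $\alpha_{j+1}=f\alpha_j$; at $k=i$ one has $f\alpha_{i+1}=\alpha_i$. Walking around the cycle through the indices $j+1,j+2,\dots,i-1$ (none of which equal $i$ or $j$), the constancy relations force $\alpha_{j+1}=\alpha_{j+2}=\cdots=\alpha_i$; walking through $i+1,i+2,\dots,j-1$ they force $\alpha_{i+1}=\alpha_{i+2}=\cdots=\alpha_j$. Both remaining "special" relations at $k=j$ and $k=i$ then collapse to the same equation $\alpha_{j+1}=f\alpha_{i+1}$. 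Setting $s=\alpha_{i+1}$, the tuple $\alpha$ has value $s$ on the arc $\{i+1,\dots,j\}$ and value $fs$ on the arc $\{j+1,\dots,i\}$, which is exactly $s\cdot e_{ij}$ by the definition of $e_{ij}$.

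To finish, I would quickly verify that $e_{ij}$ is genuinely a morphism (a one-line check of each square: at $k=j$ both sides give $f$; at $k=i$ both sides give $f$; at every other $k$ the two sides agree by construction), so that $\Hom_{\MatFac{S}{d}{f}}(\mc P_j,\mc P_i)=S\cdot e_{ij}$. Freeness of rank one follows because at least one component of $e_{ij}$ is $1$ (any position outside $\{j+1,\dots,i\}$; in the degenerate case $i=j$ every position is $1$), so the map $S\to\Hom(\mc P_j,\mc P_i)$ sending $s\mapsto s\cdot e_{ij}$ is injective since $S$ is a domain. There is no real obstacle here beyond the cyclic-index bookkeeping introduced by the convention on $A_iA_{i+1}\cdots A_j$.
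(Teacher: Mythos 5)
Your proposal is correct and follows essentially the same route as the paper: unwind the commutativity squares component-by-component, use that $\mc P_i$ and $\mc P_j$ are size-$1$ with structure maps $1$ or $f$, and conclude that the tuple is determined by a single entry (the paper uses $\alpha_{i+1}$, you use $s=\alpha_{i+1}$). Your freeness argument (one component of $e_{ij}$ is literally $1$, so scaling by $s\in S$ injects since $S$ is a domain) is a minor cosmetic variant of the paper's observation that each component of $e_{ij}$ is multiplication by a nonzero element.
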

\begin{proof} The morphisms from $\mc P_j$ to $\mc P_i$ are tuples of the form $(\alpha_1,\dots,\alpha_d)$ for some $\alpha_k \in S$. If $(\alpha_1,\dots,\alpha_d) \in \Hom_{\MatFac{S}{d}{f}}(\mc P_i,\mc P_i)$, then it is easy to see that $\alpha_1=\alpha_2 = \dots = \alpha_d$ and so $\Hom_{\MatFac{S}{d}{f}}(\mc P_i,\mc P_i) = S\cdot e_{ii}$. Suppose $(\alpha_1,\dots,\alpha_d) \in \Hom_{\MatFac{S}{d}{f}}(\mc P_j,\mc P_i)$ where $i \neq j$. Consider the following diagram, which commutes since $(\alpha_1,\dots,\alpha_d)$ is a morphism of matrix factorizations:
\[\begin{tikzcd}
\mc P_{j}\arrow[d]&\cdots\arrow[r,"1"] &S\arrow[r,"1"]\arrow[d,"\alpha_{i+1}"] &S\arrow[r,"1"]\arrow[d,"\alpha_{i}"] &\cdots\arrow[r,"1"] &S\arrow[r,"f"]\arrow[d,"\alpha_{j+1}"] &S\arrow[d,"\alpha_{j}"]\arrow[r,"1"]&\cdots\\
\mc P_i&\cdots \arrow[r,"1"] &S\arrow[r,"f"] &S\arrow[r,"1"] &\cdots\arrow[r,"1"] &S\arrow[r,"1"] &S\arrow[r,"1"]&\cdots
\end{tikzcd}\]
We conclude that $\alpha_{j+1} = \alpha_{j + 2} = \cdots = \alpha_i$, $\alpha_{j} = \alpha_{j-1} = \cdots = \alpha_{i+1}$, and $\alpha_i = f\alpha_{i+1}$. Thus, each component of the morphism can be rewritten in terms of the element $\alpha_{i+1} \in S$. It follows that $(\alpha_1,\dots,\alpha_d) = \alpha_{i+1}e_{ij}$, that is, $\Hom_{\MatFac{S}{d}{f}}(\mc P_j,\mc P_i)$ is generated by $e_{ij}$ as an $S$-module. Since the components of $e_{ij}$ are given by multiplication by non-zero elements of $S$, a morphism $s\cdot e_{ij}=0$ if and only if $s=0$. Hence, $\Hom_{\MatFac{S}{d}{f}}(\mc P_j,\mc P_i)$ is in fact a free $S$-module of rank $1$ for all $i,j\in \Z_d$.
\end{proof}

Let $\Gamma = \End_{\MatFac{S}{d}{f}}\mc (\mc P)^{\textup{op}}$ where $\mc P=\bigoplus_{i=1}^d \mc P_i$. As $S$-modules, $\Gamma \cong \bigoplus_{i,j\in\Z_d}\Hom(\mc P_j,\mc P_i)$ and therefore, Lemma~\ref{thm:gens_of_gamma} implies that $\Gamma$ is a free $S$-module of rank $d^2$. For each pair $i,j \in \Z_d$, use the same symbol $e_{ij}$ to denote the image of the generator of $\Hom(\mc P_j,\mc P_i)$ under the natural inclusion $\Hom(\mc P_j,\mc P_i) \to \Gamma$. The set $\{e_{ij}\}_{i,j\in \Z_d}$ forms a basis for $\Gamma$ as an $S$-module. We record the basic rules for multiplication of the basis elements $e_{ij}$.


\begin{lem}\label{thm:properties_of_eij}
The basis elements $\{e_{ij}\}_{i,j\in\Z_d}$ satisfy the following properties.
\begin{enumerate}[itemsep = 2pt,label=(\roman*)]
    \item \label{thm:properties_of_eij:orthogonal} $e_{ij}e_{pq} \neq 0$ if and only if $i=q$ 
    \item \label{thm:properties_of_eij:idempotents} $e_{ii}^2 = e_{ii}$ for all $i \in \Z_d$
    \item \label{thm:properties_of_eij:sum_to_1} $\sum_{i=1}^d e_{ii} = 1_{\Gamma}$
    \item \label{thm:properties_of_eij:part_4} $e_{ij}e_{ii} = e_{ij}$ and $e_{jj}e_{ij} = e_{ij}$ for all $i,j \in \Z_d$
    
    \item \label{thm:properties_of_eij:left_mult_eii-1} $e_{i(i-1)} e_{ji} = \begin{cases}
fe_{(i-1)(i-1)} & \text{if } j=i-1\\
e_{j(i-1)} &\text{otherwise}
\end{cases}$ 
\item \label{thm:properties_of_eij:right_mult_ei+1i} \(e_{ij}e_{(i+1)i} = 
\begin{cases}
fe_{(i+1)(i+1)} &j=i+1\\
e_{(i+1)j} &\text{otherwise}\\
\end{cases}\)
\item \label{thm:properties_of_eij:mult_by_z} $\displaystyle\sum_{i=1}^d e_{i(i-1)}e_{jj} = e_{j(j-1)} =  e_{(j-1)(j-1)}\displaystyle\sum_{i=1}^d e_{i(i-1)}$ for all $j\in \Z_d$ 
\end{enumerate}\qed
\end{lem}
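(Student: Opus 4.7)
The plan is to prove all seven identities by direct componentwise computation. First I would unpack the convention: since $\Gamma = \End(\mc P)^{\textup{op}}$, the product $e_{ij}\cdot e_{pq}$ in $\Gamma$ corresponds to the composition $e_{pq}\circ e_{ij}$ of honest morphisms in $\MatFac{S}{d}{f}$, and because composition is defined componentwise, the $k$-th component of $e_{pq}\circ e_{ij}$ is simply the product in $S$ of the $k$-th components of $e_{ij}$ and $e_{pq}$. From the definition, the $k$-th component of $e_{ij}:\mc P_j\to\mc P_i$ is multiplication by $f$ when $k$ lies in the cyclic arc $\{j+1,\ldots,i\}$ (empty if $i=j$, in which case $e_{ii}=1_{\mc P_i}$) and the identity on $S$ otherwise. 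All seven properties then reduce to combinatorics of such arcs.

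Properties (i)-(iv) follow almost formally. For (i), the direct-sum decomposition $\mc P=\bigoplus_i\mc P_i$ forces $\End(\mc P)$ to behave as a matrix ring, so $e_{pq}\circ e_{ij}$ vanishes unless codomain matches domain, i.e., $i=q$; conversely when $i=q$, each component of the composition is a nonzero element of $S$ (a product of $1$'s and $f$'s in the domain $S$), so the resulting morphism is nonzero. Properties (ii), (iii), (iv) are just the statements that each $e_{ii}$ is the identity endomorphism of $\mc P_i$, that these identities sum to $1_{\mc P}$, and that identities absorb under composition.

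The main content is in (v) and (vi). Since $e_{i(i-1)}$ has $f$ only in position $i$, computing $e_{i(i-1)}e_{ji}$ amounts to inserting an extra factor of $f$ into the $i$-th component of $e_{ji}$. Two cases arise: when $j=i-1$, the tuple $e_{(i-1)i}$ already has $f$ in every position except $i$, so the composition becomes $f$ in every slot, yielding $f\cdot e_{(i-1)(i-1)}$; otherwise, the arc of $f$-positions in $e_{ji}$ is $\{i+1,\ldots,j\}$ and does not contain $i$, so adjoining $i$ produces the arc $\{i,i+1,\ldots,j\}$, which is precisely the $f$-pattern of $e_{j(i-1)}$. Part (vi) is symmetric, with $f$ inserted into position $i+1$ instead. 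Finally, (vii) is immediate from (i) and (iv): on the left-hand sum only the $i=j$ term survives (by (i)) and equals $e_{j(j-1)}e_{jj}=e_{j(j-1)}$ by (iv); on the right-hand sum only $i=j$ survives and gives $e_{(j-1)(j-1)}e_{j(j-1)}=e_{j(j-1)}$.

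The only real obstacle is the arc bookkeeping in (v) and (vi), but once one pictures the $f$-positions of $e_{ij}$ as a cyclic arc on $\Z_d$, each composition simply concatenates arcs (or wraps them fully around, producing a global factor of $f$), making every verification mechanical. No deeper input is needed beyond the componentwise definition of composition and the arc description of $e_{ij}$.
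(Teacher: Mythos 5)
Your proof is correct, and it fills in exactly the routine verification that the paper suppresses: the lemma is stated with a terminal $\qed$ and no argument, so the paper treats all seven identities as immediate from the definition of $e_{ij}$ and of multiplication in $\Gamma = \End_{\MatFac{S}{d}{f}}(\mc P)^{\textup{op}}$. Your componentwise reading (product in $\Gamma$ = reverse composition, which multiplies the $k$-th slots in $S$), the cyclic-arc description of the $f$-positions of $e_{ij}$, and the reduction of (vii) to (i) and (iv) are all sound and are the natural way to carry out the check.
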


The element $\sum_{i=1}^d e_{i(i-1)} \in \Gamma$ is of particular interest because of the following.

\begin{lem}\label{thm:z^s}
Let $z=\sum_{i=1}^de_{i(i-1)}$ and $s\ge 1$ an integer. Write $s = dq + r$ for $q\ge 0$ and $0\leq r < d$. Then, 
\[z^s = f^q\sum_{i=1}^de_{i(i-r)}.\] In particular, $z^d = f\cdot 1_\Gamma$.
\end{lem}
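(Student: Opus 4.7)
The plan is to induct on $s \geq 1$, using the multiplication rules collected in Lemma~\ref{thm:properties_of_eij}. The base case $s=1$ is immediate: writing $1 = d\cdot 0 + 1$ gives $q=0, r=1$, and indeed $z = \sum_{i} e_{i(i-1)}$.

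For the inductive step, assume $z^s = f^q \sum_{i} e_{i(i-r)}$ with $s = dq + r$ and $0 \leq r < d$. I compute
\[
z^{s+1} = z^s \cdot z = f^q \sum_{i,j \in \Z_d} e_{i(i-r)}\, e_{j(j-1)}.
\]
Property \ref{thm:properties_of_eij:orthogonal} says $e_{i(i-r)} e_{j(j-1)} \neq 0$ exactly when $i = j - 1$, i.e.\ $j = i+1$, so the double sum collapses to
\[
z^{s+1} = f^q \sum_{i \in \Z_d} e_{i(i-r)}\, e_{(i+1)i}.
\]
Now I apply property \ref{thm:properties_of_eij:right_mult_ei+1i} to the factor $e_{i(i-r)} e_{(i+1)i}$, taking $j = i-r$ in that formula. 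The condition $j = i+1$ there becomes $i - r \equiv i+1 \pmod d$, equivalently $r = d-1$, which is exactly the case where $s+1$ crosses a multiple of $d$.

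The argument then splits into two cases matching the division algorithm for $s+1$. If $0 \leq r < d-1$, then $j \neq i+1$, so $e_{i(i-r)}e_{(i+1)i} = e_{(i+1)(i-r)}$; reindexing with $k = i+1$ gives $z^{s+1} = f^q \sum_k e_{k(k-(r+1))}$, which is the claim for $s+1 = dq + (r+1)$. If $r = d-1$, then $j = i+1$ and property \ref{thm:properties_of_eij:right_mult_ei+1i} produces $e_{i(i-r)}e_{(i+1)i} = f\, e_{(i+1)(i+1)}$; summing over $i$ and invoking \ref{thm:properties_of_eij:sum_to_1} gives $z^{s+1} = f^{q+1} \cdot 1_\Gamma$, which is again the claim for $s+1 = d(q+1) + 0$. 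Finally, the special case $s = d$ (so $q = 1, r = 0$) recovers $z^d = f \sum_i e_{ii} = f \cdot 1_\Gamma$ via property \ref{thm:properties_of_eij:sum_to_1}.

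The only delicate step is making sure the case distinction on $r$ lines up correctly with the case distinction in property \ref{thm:properties_of_eij:right_mult_ei+1i}, so that crossing from remainder $d-1$ back to remainder $0$ produces exactly one extra factor of $f$. Once that bookkeeping is confirmed, the rest is purely mechanical manipulation of the basis $\{e_{ij}\}$.
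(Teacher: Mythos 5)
Your proof is correct and follows essentially the same inductive argument as the paper, with the only cosmetic difference being that you multiply $z^s$ by $z$ on the right (invoking orthogonality via property \ref{thm:properties_of_eij:orthogonal} to collapse the double sum, then property \ref{thm:properties_of_eij:right_mult_ei+1i}), whereas the paper multiplies on the left and uses property \ref{thm:properties_of_eij:left_mult_eii-1}; the case split on whether $r = d-1$ is identical.
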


\begin{proof}
If $s=1$ there is nothing to prove. Assume the formula holds for $s\ge 1$ and consider $z^{s+1}$. By induction,
\[z^{s+1} = z\cdot f^q\sum_{i=1}^d e_{i(i-r)}\] where $s = dq + r$, $q\ge 0$, and $0\leq r < d$. If $r= d-1$, then, by Lemma \ref{thm:properties_of_eij},
\[z \cdot \sum_{i=1}^d e_{i(i-r)} = \sum_{i=1}^d e_{(i+1)i}e_{i(i+1)} = f \cdot 1_\Gamma.\] Since $s=dq + d-1$, we have that $s+1 = d(q+1)$. Hence,
\[z^{s+1} = f^{q+1}\sum_{i=1}^de_{ii}\] as needed. If $0 \le r < d-1$, then
\[z \cdot \sum_{i=1}^d e_{i(i-r)} = \sum_{i=1}^d e_{i(i-r-1)}\] also by Lemma \ref{thm:properties_of_eij}. In this case,
\[z^{s+1} = f^{q}\sum_{i=1}^d e_{i(i-(r+1))}\] which completes the induction since $s+1 = dq + (r+1)$ with $0\leq r + 1 < d$.

\end{proof}

\begin{lem}\label{thm:e_ij=prod}
If $i,j\in \Z_d$ with $i\neq j$, then $e_{ij}$ can be written as a product of basis elements of the form $e_{\ell(\ell-1)}$. In particular,  
\(e_{ij} = e_{(j+1)j}e_{(j+2)(j+1)}\cdots e_{(i-1)(i-2)}e_{i(i-1)}.\)
\end{lem}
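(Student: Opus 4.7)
The plan is to prove the identity by induction on the cyclic ``distance'' from $j$ to $i$. Write $i = j + k$ in $\Z_d$ for some $k$ with $1 \le k \le d-1$ (this is possible since $i \ne j$). I will show by induction on $k$ that
\[
e_{(j+1)j}\, e_{(j+2)(j+1)} \cdots e_{(j+k)(j+k-1)} \;=\; e_{(j+k)\,j},
\]
which is exactly the claimed formula once we substitute $i = j+k$.

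The base case $k = 1$ is trivial: the product degenerates to the single factor $e_{(j+1)j}$, which equals $e_{(j+k)j}$. For the inductive step, assume the formula for some $k$ with $k+1 \le d-1$. Multiplying both sides on the right by $e_{(j+k+1)(j+k)}$ reduces the task to verifying
\[
e_{(j+k)\,j}\cdot e_{(j+k+1)(j+k)} \;=\; e_{(j+k+1)\,j}.
\]
This is precisely an instance of property \ref{thm:properties_of_eij:right_mult_ei+1i} of Lemma \ref{thm:properties_of_eij}, applied with ``$i$'' replaced by $j+k$ and ``$j$'' kept as $j$. The alternative branch of that formula (yielding $f\,e_{(j+k+1)(j+k+1)}$ instead) would occur only if $j \equiv j+k+1 \pmod d$, i.e.\ $k+1 \equiv 0 \pmod d$; since we restricted to $k+1 \le d-1$, this branch never triggers, and the desired identity follows.

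There is no real obstacle here beyond bookkeeping with the cyclic indices: property \ref{thm:properties_of_eij:orthogonal} guarantees that the products are nonzero only when adjacent indices match, which is built into the form of the product, and property \ref{thm:properties_of_eij:right_mult_ei+1i} does exactly the work of ``absorbing'' one factor at a time. The only subtlety is making sure the induction is carried out over the correct range $k = 1, \ldots, d-1$ so that we stay within the regime where property \ref{thm:properties_of_eij:right_mult_ei+1i} outputs $e_{(i+1)j}$ rather than $f\,e_{(i+1)(i+1)}$; this is precisely why the statement excludes $i = j$, since the extreme case $k = d$ (wrapping all the way around) is where the factor of $f$ would appear, recovering the identity $z^d = f\cdot 1_\Gamma$ of Lemma \ref{thm:z^s}.
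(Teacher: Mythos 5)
Your proof is correct and takes essentially the same approach as the paper: both arguments hinge on iterating property \ref{thm:properties_of_eij:right_mult_ei+1i} one factor at a time, with the only cosmetic difference being that you build the product up inductively from $e_{(j+1)j}$ while the paper peels factors off $e_{ij}$ from the right.
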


\begin{proof}
Let $i\neq j \in \Z_d$. Lemma \ref{thm:properties_of_eij} \ref{thm:properties_of_eij:right_mult_ei+1i} implies that, for any $\ell \neq j \in \Z_d$, the element $e_{\ell(\ell -1)}$ can be factored out of $e_{\ell j}$ on the right
\[e_{\ell j} = e_{(\ell -1)j}e_{\ell(\ell -1 )}.\]
Since $i\neq j$, we may apply this equality for $\ell = i,i-1,\dots,j+2,j+1$ which gives us the factorization
\[\begin{split}e_{ij} &= e_{(i-1)j}e_{i(i-1)}\\
&= e_{(i-2)j}e_{(i-1)(i-2)}e_{i(i-1)}\\
&= \cdots\\
&= e_{(j+1)j}e_{(j+2)(j+1)}\cdots e_{(i-1)(i-2)}e_{i(i-1)}.\end{split}\]
\end{proof}

Let $\MCM(\Gamma)$ denote the full subcategory of finitely generated left $\Gamma$-modules which are free when viewed as $S$-modules via the inclusion $S\cdot 1_\Gamma \subset \Gamma$. The rest of this section is dedicated to proving the following.

\begin{thm}\label{thm:MF_equiv_MCMgamma}
The categories $\MCM(\Gamma)$ and $\MatFac{S}{d}{f}$ are equivalent. In particular, if $S$ is complete, the Krull-Remak-Schmidt Theorem holds in $\MatFac{S}{d}{f}$.
\end{thm}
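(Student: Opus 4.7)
My plan is to construct quasi-inverse functors between the two categories. Define $F := \Hom_{\MatFac{S}{d}{f}}(\mc P, -) : \MatFac{S}{d}{f} \to \MCM(\Gamma)$, where the left $\Gamma$-action on $F(X)$ is by precomposition, using $\Gamma = \End(\mc P)^{\textup{op}}$. The first task is to check that $F(X)$ actually lands in $\MCM(\Gamma)$, i.e.\ is $S$-free. The same argument used in Lemma~\ref{thm:P_i_proj_inj} to lift morphisms out of $\mc P_1$ shows that a homomorphism $\mc P_i \to X$ is determined freely by its $i$th component, an arbitrary $S$-linear map $S \to F_i$ (the remaining components are forced to be the appropriate $\theta_{ki}^X$-twists). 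Hence $\Hom_{\MatFac{S}{d}{f}}(\mc P_i, X) \cong F_i$ as $S$-modules, and so $F(X) \cong \bigoplus_{i \in \Z_d} F_i$ is $S$-free.

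For the inverse $G: \MCM(\Gamma) \to \MatFac{S}{d}{f}$, take $M \in \MCM(\Gamma)$ and use the orthogonal idempotents $\{e_{ii}\}$ from Lemma~\ref{thm:properties_of_eij} to decompose $M = \bigoplus_{i \in \Z_d} e_{ii}M$, each summand $S$-free. Define $\phi_i : e_{(i+1)(i+1)}M \to e_{ii}M$ as left multiplication by $e_{(i+1)i}$; parts~\ref{thm:properties_of_eij:part_4} and~\ref{thm:properties_of_eij:right_mult_ei+1i} of that lemma ensure this is well defined. A direct calculation using the multiplication table, or equivalently Lemma~\ref{thm:z^s} applied to $z = \sum_i e_{i(i-1)}$, shows that the composite $\phi_1 \phi_2 \cdots \phi_d$ is left multiplication by $e_{11} z^d e_{11} = f \cdot e_{11}$, which restricts to $f \cdot 1_{e_{11}M}$ on $F_1 := e_{11}M$. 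Each $\phi_i$ is then injective because its $d$-fold cyclic composition is multiplication by the nonzerodivisor $f$ on an $S$-free module; applying Lemma~\ref{thm:n_equals_m} to the pair $(\phi_i, \phi_{i+1}\cdots\phi_{i-1})$ forces $\rank_S(e_{ii}M)$ to be independent of $i$, so $G(M) \in \MatFac{S}{d}{f}$.

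Next I would verify that $F$ and $G$ are mutually quasi-inverse. Under $F$, the summand $e_{ii} F(X) = \Hom_{\MatFac{S}{d}{f}}(\mc P_i, X)$ is canonically identified with $F_i$, and left multiplication by $e_{(i+1)i}$ on $F(X)$ transports under this identification to the original structure map $\phi_i$; hence $GF \cong \mathrm{id}$ naturally. Conversely, starting with $M \in \MCM(\Gamma)$, morphisms $\mc P_i \to G(M)$ are determined by their $i$th component in $e_{ii}M$, and one checks that the induced $\Gamma$-action on $F(G(M)) = \bigoplus_i e_{ii}M$ agrees with the original action on $M$; hence $FG \cong \mathrm{id}$. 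Finally, for the Krull-Remak-Schmidt statement, when $S$ is complete local noetherian, $\Gamma$ is module-finite over $S$ (being $S$-free of rank $d^2$), and hence a semiperfect noetherian ring. Consequently finitely generated left $\Gamma$-modules, including those in $\MCM(\Gamma)$, admit unique decompositions into indecomposables; this transfers through the equivalence to $\MatFac{S}{d}{f}$.

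The main obstacle is the rank-equality step in the construction of $G$: it is not immediate from the $\Gamma$-module structure that the idempotent pieces $e_{ii}M$ have the same $S$-rank, and one must first establish the factorization identity $\phi_1 \cdots \phi_d = f \cdot 1$ together with injectivity of each $\phi_i$ before appealing to Lemma~\ref{thm:n_equals_m}. The remaining combinatorial bookkeeping with the basis $\{e_{ij}\}$ and the natural isomorphisms is tedious but entirely routine.
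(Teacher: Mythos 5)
Your proposal is correct and follows essentially the same route as the paper: you construct the pair of functors $\Hom_{\MatFac{S}{d}{f}}(\mc P,-)$ and the idempotent-decomposition functor $M \mapsto (e_{(i+1)i}\cdot\,:\,e_{(i+1)(i+1)}M \to e_{ii}M)_i$, which coincides with the paper's $\mc H$ since left multiplication by $z$ on $e_{(i+1)(i+1)}M$ equals left multiplication by $e_{(i+1)i}$ by Lemma~\ref{thm:properties_of_eij}\ref{thm:properties_of_eij:mult_by_z}. The only organizational difference is that the paper establishes $\mc H$ is full, faithful, and dense (via $\mc H\mc F \cong \mathrm{id}$), whereas you check both compositions are naturally isomorphic to the identity; the key constructions, the rank-equality step via Lemma~\ref{thm:n_equals_m}, and the appeal to module-finiteness over complete $S$ for KRS all match the paper.
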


If $S$ is complete, it is known that the Krull-Remak-Schmidt Theorem holds in the category $\MCM(\Gamma)$ (for example see \cite{auslander_isolated_1986}). So, establishing the equivalence is the main concern (which does not require $S$ to be complete). We start by defining a functor $\mc H:\MCM(\Gamma)\to \MatFac{S}{d}{f}$ using the element $z = \sum_{i=1}^d e_{i(i-1)} \in \Gamma$. Let $M$ be a $\Gamma$-module in $\MCM(\Gamma)$. Lemma \ref{thm:properties_of_eij} \ref{thm:properties_of_eij:orthogonal}-\ref{thm:properties_of_eij:sum_to_1} show that $e_{11},\dots,e_{dd}$ are orthogonal idempotents such that $e_{11} + e_{22} + \dots + e_{dd} = 1_{\Gamma}$. Thus, $M$ decomposes, as an $S$-module, into
\[M = e_{11}M \oplus \cdots \oplus e_{dd}M.\] Since $M\in \MCM(\Gamma)$, each summand $e_{ii}M$ is a free $S$-module. Lemma \ref{thm:properties_of_eij} \ref{thm:properties_of_eij:mult_by_z} shows that left multiplication by $z  \in \Gamma$ defines an $S$-homomorphism between free $S$-modules $z:e_{ii}M \to e_{(i-1)(i-1)}M$ for all $i \in \Z_d$.

\begin{prop}\label{thm:def_H}
Let $M \in \MCM(\Gamma)$. The $d$-tuple of $S$-homomorphisms \[( z:e_{22}M\to e_{11}M, z:e_{33}M \to e_{22}M\dots, z: e_{11}M\to e_{dd}M)\] forms a matrix factorization of $f$ in $\MatFac{S}{d}{f}$, where each map is multiplication by $z = \sum_{i=1}^d e_{i(i-1)}\in \Gamma$.
\end{prop}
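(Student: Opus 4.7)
The plan is to verify the three things that the definition of a matrix factorization demands: that each listed arrow is actually a well-defined $S$-linear map between the stated free modules, that the free modules all have the same rank, and that the $d$-fold composition equals multiplication by $f$. Every ingredient is already encoded in the multiplicative properties of the basis elements $e_{ij}$ recorded in Lemma~\ref{thm:properties_of_eij} and Lemma~\ref{thm:z^s}.

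First I would observe that, since $M \in \MCM(\Gamma)$, the idempotent decomposition $M = \bigoplus_{i \in \Z_d} e_{ii} M$ is a decomposition of $M$ into free $S$-modules (here $S$ acts through $S \cdot 1_\Gamma \subset \Gamma$). For each $i \in \Z_d$, left multiplication by $z = \sum_{\ell} e_{\ell(\ell-1)}$ sends $e_{ii} M$ into $e_{(i-1)(i-1)} M$: this is immediate from Lemma~\ref{thm:properties_of_eij}\ref{thm:properties_of_eij:mult_by_z}, which gives $z \, e_{ii} = e_{(i-1)(i-1)} \, z$, so that for any $m \in M$ we have $z(e_{ii} m) = e_{(i-1)(i-1)}(z e_{ii} m) \in e_{(i-1)(i-1)} M$. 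Since $z \in \Gamma$ and the $S$-action is central, left multiplication by $z$ is $S$-linear. Denote the resulting map $z_i : e_{ii} M \to e_{(i-1)(i-1)} M$.

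Next I would compute the composition $z_{i+1} \circ z_{i+2} \circ \cdots \circ z_i$ starting from $e_{ii} M$. Since each $z_k$ is induced by left multiplication by the single element $z \in \Gamma$, this composition is left multiplication by $z^d$ on $e_{ii} M$. By Lemma~\ref{thm:z^s}, $z^d = f \cdot 1_{\Gamma}$, so the composition equals multiplication by $f$ on $e_{ii} M$. Thus the composition $z_1 z_2 \cdots z_d = f \cdot 1_{e_{11}M}$ (and cyclically for any starting index), which is the required matrix factorization identity.

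Finally, to check that the free modules $e_{11} M, \ldots, e_{dd} M$ all have the same $S$-rank, I would apply Lemma~\ref{thm:n_equals_m} to each adjacent pair: for a fixed $i$, write the composition above as $z_i \circ (z_{i+1} z_{i+2} \cdots z_{i-1}) = f \cdot 1_{e_{(i-1)(i-1)}M}$, so Lemma~\ref{thm:n_equals_m} forces $\operatorname{rank}_S e_{ii} M = \operatorname{rank}_S e_{(i-1)(i-1)} M$. Iterating around $\Z_d$ gives a common rank, completing the verification that $(z : e_{22}M \to e_{11}M,\ldots,z:e_{11}M \to e_{dd}M)$ is an object of $\MatFac{S}{d}{f}$. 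There is no real obstacle here; the content of the proposition is that the algebraic identity $z^d = f \cdot 1_\Gamma$ packages precisely the data of a matrix factorization after one splits $M$ along the idempotents $e_{ii}$.
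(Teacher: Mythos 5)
Your proof is correct and follows essentially the same route as the paper's: the well-definedness of the maps comes from Lemma~\ref{thm:properties_of_eij}\ref{thm:properties_of_eij:mult_by_z}, the $d$-fold composition equals $f\cdot 1$ via Lemma~\ref{thm:z^s}, and the rank equality comes from applying Lemma~\ref{thm:n_equals_m} to the factorization $z\cdot z^{d-1}=f\cdot 1$ on each summand $e_{ii}M$. The only difference is cosmetic — you spell out the well-definedness step inside the proof, whereas the paper addresses it in the preceding paragraph and treats the rank condition as the sole point needing justification.
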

\begin{proof}
In light of Lemma~\ref{thm:z^s}, the only piece that needs justification is that each of the free $S$-modules involved are of the same rank. To see this, let $i \in \Z_d$. The composition
\[\begin{tikzcd}
e_{ii}M \rar{z^{d-1}} &e_{(i+1)(i+1)}M \rar{z} &e_{ii}M
\end{tikzcd}\] is $f$ times the identity on $e_{ii}M$. Since $e_{ii}M$ and  $e_{(i+1)(i+1)}M$ are free over $S$, Lemma \ref{thm:n_equals_m} implies that $\rank_S(e_{ii}M) = \rank_S(e_{(i+1)(i+1)}M)$.
\end{proof}

Following Proposition~\ref{thm:def_H}, the functor $\mc H$ is defined as follows: $\mc H(M) = (z:e_{22}M\to e_{11}M,\dots, z:e_{11}M\to e_{dd}M)$ for any $M \in \MCM(\Gamma)$ and, for a homomorphism $h:M \to N$ in $\MCM(\Gamma)$, define $\mc H(h) = (h\vert_{e_{11}M},\dots,h\vert_{e_{dd}M})$, where $h\vert_{e_{ii}M}$ denotes the restriction of $h$ to the $S$-direct summand $e_{ii}M$. Since $h$ is a $\Gamma$-homomorphism, $h\vert_{e_{ii}M}$ maps $e_{ii}M$ into $e_{ii}N$. Since multiplication by an element of $\Gamma$ commutes with any $\Gamma$-homomorphism, this $d$-tuple forms a morphism between the matrix factorizations $\mc H(M) \to \mc H(N)$. At this point we can prove that $\mc H$ is both full and faithful.

\begin{prop}\label{thm:H_full_faithful}
The functor $\mc H: \MCM(\Gamma) \to \MatFac{S}{d}{f}$ is full and faithful.
\end{prop}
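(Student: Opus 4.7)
The plan is to verify faithfulness and fullness separately, exploiting the fact that $\Gamma$ is generated as an $S$-algebra by the idempotents $e_{ii}$ together with the elements $e_{\ell(\ell-1)}$ (using Lemma~\ref{thm:e_ij=prod}), which are precisely the ``components'' of $z$.

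Faithfulness is nearly immediate. Given $h : M \to N$ in $\MCM(\Gamma)$ with $\mc H(h) = 0$, each restriction $h|_{e_{ii}M}$ vanishes, and since $M = \bigoplus_{i \in \Z_d} e_{ii}M$ as $S$-modules, one concludes $h = 0$.

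For fullness, suppose $(\alpha_1, \dots, \alpha_d) : \mc H(M) \to \mc H(N)$ is a morphism of matrix factorizations, so each $\alpha_k : e_{kk}M \to e_{kk}N$ is $S$-linear and the squares
\[\begin{tikzcd}
e_{(k+1)(k+1)}M \rar{z} \dar{\alpha_{k+1}} & e_{kk}M \dar{\alpha_k}\\
e_{(k+1)(k+1)}N \rar{z} & e_{kk}N
\end{tikzcd}\]
commute for every $k \in \Z_d$. Define $h : M \to N$ to be the $S$-linear map whose restriction to $e_{kk}M$ is $\alpha_k$ for each $k$. By construction $\mc H(h) = (\alpha_1, \dots, \alpha_d)$, so the only thing to verify is that $h$ is $\Gamma$-linear. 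Using Lemma~\ref{thm:properties_of_eij}, $h$ commutes with each idempotent $e_{ii}$ because $\alpha_i$ maps $e_{ii}M$ into $e_{ii}N$. For the elements $e_{\ell(\ell-1)}$, note that $e_{\ell(\ell-1)} \cdot e_{jj} = 0$ unless $j = \ell$, in which case $e_{\ell(\ell-1)}m = z \cdot m$ for $m \in e_{\ell\ell}M$ (since the remaining summands of $z$ annihilate $e_{\ell\ell}M$ by Lemma~\ref{thm:properties_of_eij}\ref{thm:properties_of_eij:orthogonal}). Decomposing an arbitrary $m = \sum_j e_{jj}m$, the commuting-squares hypothesis yields
\[
h(e_{\ell(\ell-1)}m) = \alpha_{\ell-1}(e_{\ell(\ell-1)} e_{\ell\ell}m) = e_{\ell(\ell-1)} \alpha_\ell(e_{\ell\ell}m) = e_{\ell(\ell-1)} h(m),
\]
so $h$ commutes with left multiplication by each $e_{\ell(\ell-1)}$.

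To conclude, observe that the $S$-module generators $\{e_{ij}\}_{i,j \in \Z_d}$ of $\Gamma$ either are idempotents ($i = j$) or, by Lemma~\ref{thm:e_ij=prod}, are products of elements of the form $e_{\ell(\ell-1)}$. Hence $h$ commutes with multiplication by every basis element, and therefore with every element of $\Gamma$, so $h \in \Hom_\Gamma(M,N)$, proving that $\mc H$ is full. The main bookkeeping obstacle is only the index juggling in the commuting square, which is handled by the observation that multiplication by $z$ on the summand $e_{\ell\ell}M$ reduces to multiplication by the single generator $e_{\ell(\ell-1)}$; once this is isolated, $\Gamma$-linearity follows directly from the generating set identified via Lemma~\ref{thm:e_ij=prod}.
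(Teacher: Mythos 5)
Your proof is correct and follows essentially the same route as the paper's: both define $h$ as the direct sum of the $\alpha_k$'s, reduce $\Gamma$-linearity to commutation with the generators $e_{\ell(\ell-1)}$ via Lemma~\ref{thm:e_ij=prod}, and then extract that commutation from the commuting squares together with Lemma~\ref{thm:properties_of_eij}\ref{thm:properties_of_eij:mult_by_z} (the identification of multiplication by $z$ on $e_{\ell\ell}M$ with multiplication by $e_{\ell(\ell-1)}$). Your version is slightly more explicit about the orthogonality bookkeeping, but the argument is the same.
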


\begin{proof}
Let $M,N \in \MCM(\Gamma)$. If $\mc H(h) = 0$ for some $\Gamma$-homomorphism $h: M \to N$, then $h\vert_{e_{ii}M} = 0$ for each $i \in \Z_d$. But this means that $h = \bigoplus_{i \in \Z_d} h\vert_{e_{ii}M} = 0$ implying that $\mc H$ is faithful. 

In order to show that $\mc H$ is full, let $(\alpha_1,\dots,\alpha_d):\mc H(M) \to \mc H(N)$ be a morphism of matrix factorizations. So, $\alpha_i: e_{ii}M \to e_{ii}N$ and we have a commutative diagram
\begin{equation}\label{diagram:H_full}\begin{tikzcd}
e_{ii}M \dar{\alpha_i} \rar{z} &e_{(i-1)(i-1)}M\dar{\alpha_{i-1}}\\
e_{ii}N \rar{z} &e_{(i-1)(i-1)}N
\end{tikzcd}\end{equation} for each $i \in \Z_d$. Let $h = \bigoplus_{j=1}^d\alpha_j : M \to N$ be the $S$-homomorphism given by $h(m) = \alpha_1(e_{11}m) + \alpha_2(e_{22}m) + \cdots + \alpha_d(e_{dd}m)$ for all $m \in M$. We claim that $h$ is in fact a $\Gamma$-homomorphism and furthermore, $\mc H(h) = (\alpha_1,\dots, \alpha_d)$. The second claim follows from the first and the definition of $\mc H$ and so our aim is to show that $h$ is a homomorphism of $\Gamma$-modules. Since $h$ is an $S$-homomorphism and $\Gamma$ is a finitely generated free $S$-module with basis $\{e_{ij}\}_{i,j\in\Z_d}$, we would be done if we showed that $e_{ij}h(m) = h(e_{ij}m)$ for all $i,j\in\Z_d$ and $m \in M$. By Lemma~\ref{thm:e_ij=prod}, it suffices to show that the elements of the form $e_{k(k-1)}$ pass through $h$ since each $e_{ij}$ is a product of elements of this form.

Let $i \in \Z_d$ and $m \in M$. By Lemma~\ref{thm:properties_of_eij}\ref{thm:properties_of_eij:mult_by_z}, multiplication by $z$ on $e_{ii}M$ (respectively on $e_{ii}N$) coincides with multiplication by the element $e_{i(i-1)}\in \Gamma$. Therefore, the diagram \eqref{diagram:H_full} implies that
\[\alpha_{i-1}(e_{i(i-1)}e_{ii}m) = e_{i(i-1)}\alpha_i(e_{ii}m).\] Since $e_{i(i-1)}e_{ii}m \in e_{(i-1)(i-1)}M$, the term on the left hand side is precisely $h(e_{i(i-1)}m)$. On the other hand, since $e_{ii}h(m) = \alpha_i(e_{ii}m)$,
\[\begin{split}
   e_{i(i-1)}h(m) &= e_{i(i-1)}e_{ii}h(m)\\
                  &= e_{i(i-1)}\alpha_i(e_{ii}m).
\end{split}\] Together, we have that $e_{i(i-1)}h(m) = h(e_{i(i-1)}m)$ as desired. Thus, $h$ is a $\Gamma$-homomorphism and $\mc H$ is full.
\end{proof}

To show that $\mc H$ is dense, we define a functor $\mc F:\MatFac{S}{d}{f} \to \MCM(\Gamma)$ which is given by $\mc F(\square) = \Hom_{\MatFac{S}{d}{f}}(\mc P,\square)$, where $\mc P=\bigoplus_{i=1}^d\mc P_i$. For a matrix factorization $X \in \MatFac{S}{d}{f}$, $\mc F(X)$ is a left $\Gamma$-module by pre-composing any morphism $\mc P\to X$ with an element of $\Gamma$. In order to show that the image of $\mc F$ does indeed land in $\MCM(\Gamma)$, we must show that $\mc F(X)$ is a free $S$-module. This requires an explicit description of the morphisms $\mc P \to X$.

Recall that $\mc P_i$ is the matrix factorization whose $i$th component is multiplication by $f$ on $S$ while the rest are the identity on $S$. For $k \in \Z_d$, let $D_k: S^d \to S^d$ be given by $D_k(a_1,\dots,a_d) = (a_1,\dots,a_{k-1},fa_k,a_{k+1},\dots,a_d)$ for any $(a_1,\dots,a_d) \in S^d$. Then, $\mc P=(D_1:S^d \to S^d,D_2:S^d\to S^d,\dots,D_d:S^d\to S^d)$.

\begin{lem}\label{thm:Hom_P_X}
Let $X=(\phi_1:F_2\to F_1,\dots,\phi_d:F_1 \to F_d)\in \MatFac{S}{d}{f}$ and let $(\alpha_1,\alpha_2,\dots,\alpha_d) \in \mc F(X) =\Hom_{\MatFac{S}{d}{f}}(\mc P,X)$. For each $k\in \Z_d$, we may write $\alpha_k =\rvect{\alpha_{k1} & \alpha_{k2} &\cdots &\alpha_{kd}}$ for some $\alpha_{ki}\in \Hom_S(S,F_k)$.
Then, for any $j\neq 0 \in \Z_d$,
\[\alpha_{k(k+j)} = \phi_k\phi_{k+1}\cdots\phi_{k+j-1}\alpha_{(k+j)(k+j)}.\]
\end{lem}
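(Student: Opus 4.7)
The plan is to extract the claim directly from the commuting squares that define the morphism $(\alpha_1,\dots,\alpha_d): \mc P \to X$, and then iterate. Because $\mc P = \bigoplus_{i=1}^d \mc P_i$, the $k$th square is
\[\alpha_k D_k = \phi_k \alpha_{k+1},\]
so I would first evaluate both sides on the standard basis vectors $\epsilon_1,\dots,\epsilon_d$ of $S^d$. Since $D_k \epsilon_i = \epsilon_i$ for $i \neq k$ and $D_k \epsilon_k = f\epsilon_k$, reading off columns gives the two identities
\[\alpha_{ki} = \phi_k\, \alpha_{(k+1)i}\quad(i \neq k)\qquad\text{and}\qquad f\,\alpha_{kk} = \phi_k\, \alpha_{(k+1)k}.\]
The first of these is the single-step case $j=1$ of what we want.

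The main step is then induction on $j$, running over $j = 1, 2, \dots, d-1$ (integer representatives, so that $k+j \not\equiv k \pmod d$). Suppose the statement holds for $j-1$ in the form
\[\alpha_{(k+1)(k+j)} = \phi_{k+1}\phi_{k+2}\cdots \phi_{k+j-1}\, \alpha_{(k+j)(k+j)}.\]
Since $k+j \neq k$ in $\Z_d$, the column identity above (applied at row $k$ with $i = k+j$) gives
\[\alpha_{k(k+j)} = \phi_k\, \alpha_{(k+1)(k+j)} = \phi_k \phi_{k+1}\cdots \phi_{k+j-1}\, \alpha_{(k+j)(k+j)},\]
as desired.

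I expect no serious obstacle here; the only thing to watch is the modular indexing, namely that the induction is valid precisely because the hypothesis $j \neq 0$ in $\Z_d$ ensures we never invoke the ``diagonal'' identity $f\alpha_{kk} = \phi_k\alpha_{(k+1)k}$ (which has an extra factor of $f$). The proof is essentially a routine column-by-column unpacking of the morphism diagram, combined with a one-line induction.
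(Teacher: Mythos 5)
Your proof is correct and takes essentially the same route as the paper's: both read off the off-diagonal column entries from the commuting squares $\alpha_k D_k = \phi_k\,\alpha_{k+1}$, using the key observation that the $D$-maps act as the identity on the $(k+j)$-th coordinate whenever $j\neq 0$. The paper composes the whole rectangle from $\alpha_{k+j}$ to $\alpha_k$ and then extracts the $(k+j)$-th component in one step, while you iterate one square at a time via induction on $j$; this is the same computation organized slightly differently, and your caveat about never invoking the diagonal identity $f\alpha_{kk} = \phi_k\alpha_{(k+1)k}$ is exactly the point the paper is making when it notes that $D_kD_{k+1}\cdots D_{k+j-1}$ is the identity on that component.
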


\begin{proof}
Similar to the proof of Lemma \ref{thm:stable_homotopy}, the formula follows from the following commutative diagram:
\[\begin{tikzcd}
S^d\arrow[r,"D_{k+j-1}"]\arrow[d,swap,"\alpha_{k+j}"] &S^d \dar{\alpha_{k+j-1}}\arrow[r,"D_{k+j-2}"]\dar &\cdots \arrow[r,"D_{k+1}"]& S^d \dar{\alpha_{k+1}}\arrow[r,"D_k"]\dar & S^d\arrow[d,"\alpha_k"]\\
F_{k+j}\arrow[r,"\phi_{k+j-1}"] &F_{k+j-1}\arrow[r,"\phi_{k+j-2}"]& \cdots\arrow[r,"\phi_{k+1}"] & F_{k+1}\arrow[r,"\phi_k"] &F_k.
\end{tikzcd}\] 
The commutativity of the outermost rectangle gives us that $\alpha_kD_kD_{k+1} \cdots D_{k+j-1} = \phi_k\phi_{k+1}\cdots\phi_{k+j-1}\alpha_{k+j}$. Since $j\neq 0$, the composition $D_kD_{k+1}\cdots D_{k+j-1}$ is the identity on the $(k+j)$th component of $S^d$. Therefore, if we compare the $(k+j)$th components of the homomorphisms on either side of the above equality, we find that $\alpha_{k(k+j)} = \phi_k\phi_{k+1}\cdots\phi_{k+j-1}\alpha_{(k+j)(k+j)}$ as desired.
\end{proof}

Let $X = (\phi_1:F_2 \to F_1, \cdots,\phi_d: F_1\to F_d) \in \MatFac{S}{d}{f}$ be a matrix factorization. Recall from Section \ref{section:Frobenius}, the homomorphisms $\theta_{ki}^X: F_i \to F_k$, $i,k \in \Z_d$, which are given by 
\[\theta_{ki}^X =\begin{cases}
1_{F_k} & i=k\\
\phi_k\phi_{k+1}\cdots\phi_{i-2}\phi_{i-1} & i\neq k.\\
\end{cases}\]
For each $(g_1,\dots,g_d) \in \bigoplus_{i=1}^d F_i$, we associate a $d$-tuple of $S$-homomorphisms
\[\theta^X(g_1,\dots,g_d) \coloneqq \left(\rvect{\theta_{k1}^Xg_1 & \cdots & \theta_{kd}^Xg_d}\right)_{k=1}^d.\] Here $\theta_{ki}^Xg_i$ is being identified with its image in $\Hom_S(S,F_k)$ under the natural isomorphism $F_k \cong \Hom_S(S,F_k)$. When $X$ is clear from context, we will omit the superscripts.
\begin{lem}\label{thm:hom_P_X_free}
Let $X = (\phi_1:F_2 \to F_1, \cdots,\phi_d: F_1\to F_d) \in \MatFac{S}{d}{f}$. Then, $\theta(g_1,\dots,g_d) \in \Hom_{\MatFac{S}{d}{f}}(\mc P,X)$ for any $(g_1,\dots,g_d) \in \bigoplus_{i=1}^d F_i$. Furthermore, the map $\theta: \bigoplus_{i=1}^d F_i \to \Hom_{\MatFac{S}{d}{f}}(\mc P,X)$ is an isomorphism of $S$-modules. 
\end{lem}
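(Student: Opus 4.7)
The plan is to verify directly that $\theta(g_1,\dots,g_d)$ satisfies the commutative square conditions for being a morphism of matrix factorizations, and then to combine the previous lemma with the special structure of the $\theta_{ki}^X$ to establish the bijection.

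\textbf{Showing $\theta(g_1,\dots,g_d)$ is a morphism.} Fix $k \in \Z_d$. I need to show the square
\[\begin{tikzcd}
S^d \rar{D_k} \dar[swap]{\theta(\bar g)_{k+1}} &S^d \dar{\theta(\bar g)_{k}}\\
F_{k+1} \rar{\phi_k} &F_k
\end{tikzcd}\]
commutes, where $\bar g = (g_1,\dots,g_d)$. Comparing the $i$th components of $S^d$: the top-then-right route gives $\theta_{ki}^X g_i$ when $i \neq k$ and $\theta_{kk}^X(f g_k) = f g_k$ when $i = k$. The left-then-bottom route gives $\phi_k \theta_{(k+1)i}^X g_i$ for all $i$. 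The required identities
\[\phi_k \theta_{(k+1)i}^X = \theta_{ki}^X \; (i \neq k), \qquad \phi_k \theta_{(k+1)k}^X = f \cdot 1_{F_k}\]
are immediate from the definition of $\theta_{ki}^X$ (using part (i) of Lemma~\ref{thm:shifts_of_X} for the second). Hence the square commutes for every $k$, and $\theta(\bar g)$ lies in $\Hom_{\MatFac{S}{d}{f}}(\mc P, X)$.

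\textbf{Isomorphism.} The map $\theta$ is $S$-linear by construction. For injectivity, note that if $\theta(g_1,\dots,g_d) = 0$ then in particular its $k$th diagonal entry $\theta_{kk}^X g_k = g_k$ vanishes for each $k$, since $\theta_{kk}^X = 1_{F_k}$. So $(g_1,\dots,g_d) = 0$. For surjectivity, take any $\alpha = (\alpha_1,\dots,\alpha_d) \in \Hom_{\MatFac{S}{d}{f}}(\mc P, X)$ and write $\alpha_k = [\alpha_{k1}\;\cdots\;\alpha_{kd}]$ with $\alpha_{ki} \in \Hom_S(S, F_k) \cong F_k$. Setting $g_i = \alpha_{ii}$ and applying Lemma~\ref{thm:Hom_P_X} gives $\alpha_{ki} = \theta_{ki}^X g_i$ for all $i \neq k$, while for $i = k$ this equality is automatic because $\theta_{kk}^X = 1_{F_k}$. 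Thus $\alpha = \theta(g_1,\dots,g_d)$, proving $\theta$ is surjective.

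\textbf{Where the real content lies.} The only non-routine piece is the checking of the commutative squares, and even that reduces immediately to the telescoping identity for products of the $\phi_k$'s which is encoded in Lemma~\ref{thm:shifts_of_X}(i). Everything else is bookkeeping: the previous lemma has already isolated the diagonal entries $\alpha_{ii}$ as the parameters that freely determine a morphism $\mc P \to X$, so this result is really a restatement of Lemma~\ref{thm:Hom_P_X} in terms of an explicit $S$-module isomorphism.
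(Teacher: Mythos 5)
Your proof is correct and follows essentially the same route as the paper: the commutativity check via the identities $\phi_k\theta^X_{(k+1)i} = \theta^X_{ki}$ for $i\neq k$ and $\phi_k\theta^X_{(k+1)k} = f\cdot 1_{F_k}$, injectivity read off the diagonal entries, and surjectivity obtained by invoking Lemma~\ref{thm:Hom_P_X} to reconstruct the morphism from its diagonal components $\alpha_{ii}$. The only cosmetic difference is that you compare entries of $S^d$ component-by-component rather than writing the full row-vector identity, but the content is identical.
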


\begin{proof}
 First, we show that $\theta(g_1,\dots,g_d)$ as defined is in fact a morphism of matrix factorizations between $\mc P$ and $X$. What needs to be shown is the commutativity of the diagram
\[\begin{tikzcd}[ampersand replacement = \&]
\&S^d\dar[swap]{\rvect{\theta_{(k+1)1}g_1 &  \cdots   & \theta_{(k+1)d}g_d}} \arrow[r,"D_k"] \&S^d \dar{\rvect{\theta_{k1}g_1  &   \cdots    & \theta_{kd}g_d}}\\
\&F_{k+1} \rar{\phi_k} \&F_k
\end{tikzcd}\]
 for all $k \in \Z_d$. Notice that $\phi_k\theta_{(k+1)k} = \phi_k\phi_{k+1}\phi_{k+2}\cdots\phi_{k-1} = f\cdot1_{F_k} = f\theta_{kk}$ and $\phi_k\theta_{(k+1)i} = \theta_{ki}$ for all $i\neq k$. Therefore, we have that
\[\begin{split}
    \phi_k\rvect{\theta_{(k+1)1}g_1 & \cdots & \theta_{(k+1)d}g_d} &= \rvect{\theta_{k1}g_1 &\cdots &f\theta_{kk}g_k &\cdots &\theta_{kd}g_d}\\
     &=\rvect{\theta_{k1}g_1 &\cdots &\theta_{kk} g_k &\cdots  & \theta_{kd}g_d}D_k
\end{split}\] which implies the commutativity of the diagram as desired.

In order to show $\theta$ is an $S$-module isomorphism, let $(\alpha_1,\dots,\alpha_d) \in \Hom_{\MatFac{S}{d}{f}}(\mc P,X)$, $k \in \Z_d$, and denote the components of $\alpha_k=\rvect{\alpha_{k1} &\dots &\alpha_{kd}} \in \Hom_S(S^d,F_k)$ as above. By Lemma~\ref{thm:Hom_P_X}, $\alpha_{k(k+j)} = \theta_{k(k+j)}\alpha_{(k+j)(k+j)}$ for each $j\neq 0$. Hence, $\alpha_k = \rvect{\theta_{k1}\alpha_{11} & \cdots & \theta_{kd}\alpha_{dd}}$. It follows that the morphism $(\alpha_1,\dots,\alpha_d)$ depends only on the diagonal components $\alpha_{11},\alpha_{22},\dots,\alpha_{dd}$. In particular, the tuple $(\alpha_{11},\dots,\alpha_{dd})\in \bigoplus_{j=1}^dF_j$ is a pre-image for $(\alpha_1,\dots,\alpha_d)$ under the map $\theta$. Finally, $\theta$ is injective since $k$th component of $\rvect{\theta_{k1}g_1 & \cdots & \theta_{kk}g_k & \cdots &\theta_{kd}g_d}$ is $\theta_{kk}g_k = g_k$.
\end{proof}

\begin{cor}
For any $X \in \MatFac{S}{d}{f}$ of size $n$, the $\Gamma$-module $\mc F(X)=\Hom_{\MatFac{S}{d}{f}}(\mc P,X)$ is a free $S$-module of rank $dn$. In particular, $\mc F(X) \in \MCM(\Gamma)$. \qed
\end{cor}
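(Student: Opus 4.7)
The plan is to obtain this corollary as essentially immediate consequence of Lemma~\ref{thm:hom_P_X_free}, together with the definitions of size of a matrix factorization and of $\MCM(\Gamma)$.

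First, I would invoke Lemma~\ref{thm:hom_P_X_free}, which provides an explicit $S$-module isomorphism
\[\theta:\bigoplus_{i=1}^d F_i \xrightarrow{\;\;\cong\;\;} \Hom_{\MatFac{S}{d}{f}}(\mc P, X) = \mc F(X).\]
Since $X$ has size $n$ by hypothesis, each $F_i$ is a free $S$-module of rank $n$, so $\bigoplus_{i=1}^d F_i$ is a free $S$-module of rank $dn$. Transporting along $\theta$, the $\Gamma$-module $\mc F(X)$ is free of rank $dn$ when viewed as an $S$-module via the inclusion $S\cdot 1_\Gamma\subset \Gamma$, which gives the first assertion.

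For the second assertion, $\mc F(X)\in \MCM(\Gamma)$ requires two things: that $\mc F(X)$ is free as an $S$-module (already established) and that $\mc F(X)$ is finitely generated as a $\Gamma$-module. The latter is automatic, since any module that is finitely generated over the subring $S\cdot 1_\Gamma \subset \Gamma$ is certainly finitely generated over $\Gamma$. Hence $\mc F(X)\in \MCM(\Gamma)$.

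There is really no obstacle here; all the work was already done in constructing the map $\theta$ and verifying it was an $S$-isomorphism. The only point worth noting is that the $\Gamma$-module structure on $\mc F(X)$ (given by precomposition by endomorphisms of $\mc P$) plays no role in establishing freeness over $S$, so the corollary is purely a statement about the underlying $S$-module, which Lemma~\ref{thm:hom_P_X_free} pins down completely.
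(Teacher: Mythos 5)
Your proof is correct and takes exactly the same route as the paper, which leaves the corollary as an immediate consequence of Lemma~\ref{thm:hom_P_X_free}; you have simply spelled out the routine transport-of-structure and finite-generation observations.
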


Consider how the elements $e_{ii}\in \Gamma$ act on a morphism $\alpha: \mc P \to X$. From the Lemma \ref{thm:hom_P_X_free}, we may write $\alpha_k = \rvect{\theta_{k1}g_1 & \cdots & \theta_{kd}g_d}$ for some $(g_1,\dots,g_d) \in \bigoplus_{j=1}^d F_j$. For $i \in \Z_d$, we write $e_{ii} = (\epsilon_{ii}^1,\dots,\epsilon_{ii}^d)$ where $\epsilon_{ii}^k(a_1,\dots,a_d) = (0,\dots,0,a_i,0,\dots,0)$ for any $(a_1,\dots,a_d) \in S^d$. It follows that $\alpha_k \circ \epsilon_{ii}^k = \rvect{0 & \cdots & \theta_{ki}g_i &\cdots &0}$ where the only nonzero entry is in the $i$th position. Hence, $e_{ii}\cdot \alpha = \left(\rvect{0 & \cdots & \theta_{ki}g_i &\cdots &0}\right)_{k=1}^d \in e_{ii}\mc F(X)$ and in fact this is the form of every element of $e_{ii}\mc F(X)$: 
\[e_{ii}\mc F(X) = \left\{\left(\rvect{0 & \cdots & \theta_{ki}g_i &\cdots &0}\right)_{k=1}^d \Big\vert g_i \in F_i\right\}.\]

\begin{prop}\label{thm:H_dense}
Let $X=(\phi_1:F_2\to F_1,\dots,\phi_d:F_1\to F_d) \in \MatFac{S}{d}{f}$. Then, $(\theta\vert_{F_1},\dots,\theta\vert_{F_d}):X \to \mc H\mc F(X)$ is an isomorphism of matrix factorizations.
\end{prop}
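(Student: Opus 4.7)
The plan is to verify two things: that each component $\theta\vert_{F_k}:F_k \to e_{kk}\mc F(X)$ is an $S$-module isomorphism, and that the $d$-tuple $(\theta\vert_{F_1},\ldots,\theta\vert_{F_d})$ assembles into a morphism of matrix factorizations $X \to \mc H\mc F(X)$. Together these immediately yield the claimed isomorphism.

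For the first point, evaluating $\theta$ on a tuple of the form $(0,\ldots,0,g,0,\ldots,0)$ with $g \in F_k$ in the $k$-th slot produces a morphism $\mc P \to X$ whose $j$-th component is the row vector $\rvect{0 & \cdots & \theta_{jk}^X g & \cdots & 0}$ with non-zero entry in column $k$. By the explicit description of $e_{kk}\mc F(X)$ recorded just before the proposition, this is precisely the form of a general element of $e_{kk}\mc F(X)$. Since $\theta$ is an $S$-module isomorphism from $\bigoplus_i F_i$ to $\mc F(X)$ by Lemma~\ref{thm:hom_P_X_free} and since $\mc F(X) = \bigoplus_k e_{kk}\mc F(X)$ via the orthogonal idempotents, matching direct summands forces each restriction $\theta\vert_{F_k}$ to be an $S$-module isomorphism onto $e_{kk}\mc F(X)$.

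For the second point, the task is to check, for each $k \in \Z_d$, that $z\cdot \theta\vert_{F_{k+1}}(g) = \theta\vert_{F_k}(\phi_k g)$ for every $g \in F_{k+1}$. Because $\Gamma = \End_{\MatFac{S}{d}{f}}(\mc P)^{\textup{op}}$, left multiplication by $z = \sum_i e_{i(i-1)}$ on $\mc F(X)$ is realized as precomposition with a morphism $\gamma_z:\mc P \to \mc P$ whose $j$-th component $S^d \to S^d$ sends $(a_1,\ldots,a_d)$ to $(b_1,\ldots,b_d)$ with $b_i = a_{i-1}$ for $i \neq j$ and $b_j = f\cdot a_{j-1}$. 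A component-wise computation of $\theta\vert_{F_{k+1}}(g) \circ \gamma_z$ then reduces, using the identities $\phi_k\theta_{(k+1)i}^X = \theta_{ki}^X$ for $i \neq k$ and $\phi_k\theta_{(k+1)k}^X = f\cdot 1_{F_k}$ already exploited in Construction~\ref{construction:enough}, exactly to the formula defining $\theta\vert_{F_k}(\phi_k g)$. The main obstacle is sorting out the opposite-algebra bookkeeping to correctly identify the morphism $\gamma_z$; once that is done, the commutativity check is essentially a repeat of the diagram chase carried out in the proof of Lemma~\ref{thm:hom_P_X_free}.
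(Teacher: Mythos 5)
Your argument is correct and follows the same route as the paper's proof: you verify that each restriction $\theta\vert_{F_k}: F_k \to e_{kk}\mc F(X)$ is an $S$-isomorphism using the explicit description of $e_{kk}\mc F(X)$, and then check commutativity of the squares by a component-wise computation identifying left multiplication by $z$ on $\mc F(X)$ with precomposition by $z$ viewed as an endomorphism of $\mc P$, which is the same reduction the paper effects via $z\,e_{(i+1)(i+1)} = e_{(i+1)i}$. The only small imprecision is that the final comparison actually invokes the right-multiplication identities $\theta_{jk}^X\phi_k = \theta_{j(k+1)}^X$ (for $j\neq k+1$) and $\theta_{(k+1)k}^X\phi_k = f\cdot 1_{F_k}$, rather than the left-multiplication identities $\phi_k\theta_{(k+1)i}^X = \theta_{ki}^X$ that you cite; both families are immediate from the definition of $\theta_{ki}^X$, so this does not affect the validity of the argument.
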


\begin{proof}
First notice that for any $g_i \in F_i$, \[\theta\vert_{F_i}(g_i) = \theta(0,\dots,0,g_i,0,\dots,0) =\left(\rvect{0 & \cdots & \theta_{ki}g_i &\cdots &0}\right)_{k=1}^d \in e_{ii}\mc F(X).\] The restriction of $\theta$ is still injective and the preceding paragraph explains why $\theta$ restricted to $F_i$ is surjective. Therefore, it is enough to show the commutativity of the diagram
\[\begin{tikzcd}
F_1\arrow[r,"\phi_d"]\arrow[d,"\theta\vert_{F_1}"] &F_d\arrow[r,"\phi_{d-1}"]\arrow[d,"\theta\vert_{F_d}"] &\cdots \arrow[r,"\phi_2"] &F_2\arrow[r,"\phi_1"]\arrow[d,"\theta\vert_{F_2}"] &F_1\arrow[d,"\theta\vert_{F_1}"]\\
e_{11}\mc F(X)\arrow[r,swap,"z"] &e_{dd}\mc F(X)\arrow[r,swap,"z"] &\cdots \arrow[r,swap,"z"] &e_{22}\mc F(X) \arrow[r,swap,"z"] &e_{11}\mc F(X).
\end{tikzcd}\] Let $i \in \Z_d$ and $g_{i+1}\in F_{i+1}$. Then,
\(
    \theta\vert_{F_i}\phi_i(g_{i+1}) = \left(\rvect{0 & \cdots &\theta_{ki}\phi_i(g_{i+1}) &\cdots &0}\right)_{k=1}^d
\) and
\[\theta_{ki}\phi_i(g_{i+1}) =\begin{cases}
 fg_{i+1} &k=i+1\\
 \phi_k\phi_{k+1}\cdots\phi_{i-1}\phi_i(g_{i+1}) & k\neq i+1.
\end{cases}\]
To compute the other composition, recall that $z\cdot e_{(i+1)(i+1)}\alpha = e_{(i+1)i}\alpha$ for any $\alpha \in \mc F(X)$. Write $e_{(i+1)i} = (\epsilon_{(i+1)i}^1,\epsilon_{(i+1)i}^2,\dots,\epsilon_{(i+1)i}^d)$ where $\epsilon_{(i+1)i}^k:S^d \to S^d$ for each $k \in \Z_d$. It suffices to compute the composition of $S$-homomorphisms $S^d \to F_k$
\[\left(\rvect{0 \cdots &\theta_{k(i+1)}(g_i) \cdots &0} \circ \epsilon_{(i+1)i}^k\right)(a_1,\dots,a_d),\] for each $k \in \Z_d$ and $(a_1,\dots,a_d) \in S^d$. We have that 
\[\epsilon_{(i+1)i}^k(a_1,\dots,a_d) = \begin{cases}
(0,\dots,fa_i,\dots,0) &k = i+1\\
(0,\dots, a_i,\dots,0) &k\neq i+1
\end{cases}\] where the non-zero entries are in the $(i+1)$st position. Thus, the composition above is equal to $a_i\theta_{k(i+1)}(g_i)$ when $k\neq i+1$ and $fa_ig_i$ when $k=i+1$. Comparing this with the components of $\theta\vert_{F_i}\phi_i(g_{i+1})$ we conclude that $z\circ\theta\vert_{F_{i+1}}(g_{i+1}) = \theta\vert_{F_i}\circ\phi_i(g_{i+1})$. Hence, $(\theta\vert_{F_1},\dots,\theta\vert_{F_d}):X \to \mc H\mc F(X)$ is an isomorphism of matrix factorizations.
\end{proof}

As a consequence of Proposition \ref{thm:H_dense}, the functor $\mc H$ is also dense. This completes the proof of Theorem \ref{thm:MF_equiv_MCMgamma}. It is also worth mentioning that the analogous statement for the composition $\mc F\mc H$ is true, that is, $\mc F\mc H(M) \cong M$ for any $M \in \MCM(\Gamma)$. This follows from observing that the isomorphism of free $S$-modules, $\theta_{\mc H(M)}: M \to \mc F\mc H(M)$, is also a $\Gamma$-homomorphism. As in the proof of Proposition \ref{thm:H_full_faithful}, one can show that $e_{i(i-1)}\theta_{\mc H(M)}(m) = \theta_{\mc H(M)}(e_{i(i-1)}m)$ for all $m \in M$ and $i \in \Z_d$.

\section{The $d$-fold Branched Cover}\label{section:d_branched_cover}
    
Let $(S,\mf n, \bf k)$ be a complete regular local ring, and let $f$ be a non-zero non-unit in $S$. Set $R = S/(f)$ and fix an integer $d\ge 2$. 

\begin{defi}
The ($d$\textit{-fold) branched cover} of $R$ is the hypersurface ring
\[R^\sharp = S\llbracket z \rrbracket /(f+z^d).\]
\end{defi}

Throughout this section, we will also assume that $\bf k$ is algebraically closed and that the characteristic of $\bf k$ does not divide $d$. In this case, the polynomial $x^d -1 \in \mathbf{k}[x]$ has $d$ distinct roots in $\bf k$ and the group formed by its roots is cyclic of order $d$. Any generator of this group is a \textit{primitive $d$th root of unity}. Since $S$ is complete, it also contains primitive $d$th roots of $1 \in S$ \cite[Corollary A.31]{leuschke_cohen-macaulay_2012}. 

For the rest of this section, fix an element $\omega \in S$ such that $\omega^d = 1$ and $\omega^t \neq 1$ for all $0<t<d$. The ring $R^\sharp$ carries an automorphism $\sigma: R^\sharp \to R^\sharp$ of order $d$ which fixes $S$ and sends $z$ to $\omega z$. Denote by $R^\sharp[\sigma]$ the \textit{skew group algebra} of the cyclic group of order $d$ generated by $\sigma$ acting on $R^\sharp$. Specifically, $R^\sharp[\sigma] = \bigoplus_{i\in\Z_d}R^\sharp \cdot \sigma^i$ as $R^\sharp$-modules with multiplication given by the rule
\[(s\cdot \sigma^i)\cdot(t\cdot \sigma^j) = s\sigma^i(t) \cdot \sigma^{i+j}\]
for $s,t \in R^\sharp$ and $i,j \in \Z_d$. The left modules over $R^\sharp[\sigma]$ are precisely the $R^\sharp$-modules $N$ which carry a compatible action of $\sigma$, that is, an action of $\sigma$ such that $\sigma(rx) = \sigma(r)\sigma(x)$ for all $r \in R^\sharp$ and $x \in N$. It follows that $R^\sharp$ itself is naturally a left $R^\sharp[\sigma]$-module with the action of $\sigma$ given by evaluating $\sigma(r)$ for any $r \in R^\sharp$. We say that a left $R^\sharp[\sigma]$-module is \textit{maximal Cohen-Macaulay} (MCM as usual) if it is MCM when it is viewed as an $R^\sharp$-module. Denote the category of MCM $R^\sharp[\sigma]$-modules by $\MCM_{\sigma}(R^\sharp)$.

In the case $d=2$, Kn\"orrer showed that the category of MCM modules over $R^\sharp[\sigma]$ is equivalent to the category of matrix factorizations of $f$ with $2$ factors \cite[Proposition 2.1]{knorrer_cohen-macaulay_1987}. The main goal of this section is to extend the equivalence given by Kn\"orrer for all $d\ge 2$.

\begin{lem}\label{thm:decomp_of_N}
Let $N$ be an $R^\sharp[\sigma]$-module. Then, $N$ decomposes as an $S$-module into \(N = \bigoplus_{i \in \Z_d}N^{\omega^i}\) where
\[N^{\omega^i} = \left\{x \in N : \sigma(x) = \omega^i x\right\}.\] Furthermore, if $N$ is a MCM $R^\sharp[\sigma]$-module, then $N$ and each summand $N^{\omega^i}$ are finitely generated free $S$-modules.
\end{lem}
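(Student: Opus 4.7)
The plan is to exhibit the decomposition via central-looking idempotents in $R^\sharp[\sigma]$ built from the character theory of the cyclic group $\langle\sigma\rangle \cong \Z_d$, and then to separately argue freeness over $S$ using the Auslander--Buchsbaum formula.

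First I would observe that since $\textup{char}(\mathbf{k})$ does not divide $d$, the integer $d$ is a unit in $\mathbf{k}$ and therefore in $S$, so $1/d$ makes sense in $R^\sharp$. For each $i \in \Z_d$, define
\[
e_i \coloneqq \frac{1}{d}\sum_{j=0}^{d-1}\omega^{-ij}\sigma^j \in R^\sharp[\sigma].
\]
A short direct computation using $\sigma^d = 1$ shows that $\sigma \cdot e_i = \omega^i e_i$, so for any $x \in N$ one has $\sigma(e_i x) = \omega^i e_i x$, i.e.\ $e_i N \subseteq N^{\omega^i}$. The standard orthogonality of characters $\frac{1}{d}\sum_j \omega^{(k-i)j} = \delta_{ik}$ then gives $e_i e_k = \delta_{ik} e_i$ as operators on $N$ and $\sum_{i \in \Z_d} e_i = 1$. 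Conversely, if $x \in N^{\omega^i}$ then $e_i x = x$, so $e_i N = N^{\omega^i}$. This yields the internal direct sum decomposition $N = \bigoplus_{i \in \Z_d} N^{\omega^i}$ as $S$-modules (the action of $S \subseteq R^\sharp$ commutes with each $e_i$ because $\sigma$ fixes $S$).

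For the freeness statement, note first that $R^\sharp = S\llbracket z\rrbracket/(f+z^d)$ is a free $S$-module with basis $1,z,\dots,z^{d-1}$, since $z$ satisfies the monic relation $z^d = -f$. In particular, any finitely generated $R^\sharp$-module is also finitely generated over $S$, and a regular sequence on $N$ consisting of elements of $\mathfrak n \subseteq R^\sharp$ is the same whether computed over $S$ or over $R^\sharp$. Since $\dim R^\sharp = \dim S$, the MCM hypothesis gives
\[
\dep_S(N) \;=\; \dep_{R^\sharp}(N) \;=\; \dim R^\sharp \;=\; \dim S.
\]
Because $S$ is regular, the Auslander--Buchsbaum formula forces $\pd_S(N) = 0$, so $N$ is a finitely generated free $S$-module.

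Finally, each $N^{\omega^i} = e_i N$ is a direct $S$-summand of the free $S$-module $N$, hence finitely generated projective, hence free over the local ring $S$. The main thing to get right is the first step: one needs $1/d \in S$ to build the idempotents $e_i$, and once that is in hand everything else reduces to the character-sum computation and a standard Auslander--Buchsbaum argument, neither of which should present serious difficulty.
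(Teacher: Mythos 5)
Your argument is correct and is essentially the same as the paper's: both hinge on the character orthogonality $\sum_{j}\omega^{(k-i)j}=d\,\delta_{ik}$, which you package via the isotypic idempotents $e_i=\tfrac{1}{d}\sum_{j}\omega^{-ij}\sigma^j$ while the paper carries out the equivalent element-wise computation directly. Your Auslander--Buchsbaum argument for freeness fills in a detail that the paper only asserts.
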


\begin{proof}
In order to justify the direct sum decomposition of $N$, we will make repeated use of the fact $\sum_{i=0}^{d-1}\omega^{ki} = 0$ for any $k \in \Z_d$. Let $x \in N$ and observe that
\[\begin{split}
    dx &= dx + \left(\sum_{i=0}^{d-1}\omega^{-i}\right)\sigma(x) + \left(\sum_{i=0}^{d-1}\omega^{-2i}\right)\sigma^2(x) + \cdots + \left(\sum_{i=0}^{d-1}\omega^{-(d-1)i}\right)\sigma^{d-1}(x)\\
    &= \sum_{i=0}^{d-1}\sigma^i(x) + \sum_{i=0}^{d-1}\omega^{-i}\sigma^i(x) + \cdots + \sum_{i=0}^{d-1}\omega^{-(d-1)i}\sigma^{i}(x)\\
    &=\sum_{k=0}^{d-1}\sum_{i=0}^{d-1}\omega^{-ik}\sigma^i(x).\\
\end{split}\] Also notice that, for any $k \in \Z_d$,
\[\begin{split}
    \sigma\left(\sum_{i=0}^{d-1}\omega^{-ik}\sigma^i(x)\right) &= \sigma(x) + \omega^{-k}\sigma^2(x) + \omega^{-2k}\sigma^3(x) + \cdots + \omega^{-(d-1)k}x\\
    &= \omega^k(x + \omega^{-k}\sigma(x) + \omega^{-2k}\sigma^2(x) + \cdots + \omega^{-(d-1)k}\sigma^{d-1}(x)).\\
\end{split}\] That is, $\sum_{i=0}^{d-1}\omega^{-ik}\sigma^i(x) \in N^{\omega^k}$. Since $\sigma$ is $S$-linear and $d$ is invertible in $S$, we have that \[x = \sum_{k=0}^{d-1}\sum_{i=0}^{d-1}\frac{\omega^{-ik}\sigma^i(x)}{d} \in N^1 + N^\omega + \cdots + N^{\omega^{d-1}}\] implying that $N = \sum_{k=0}^{d-1} N^{\omega^k}$.

Next, suppose we have a sum of elements
\begin{equation}\label{equation:lin_ind} x_0 + x_1 + \cdots + x_{d-1} = 0\end{equation} with $x_i \in N^{\omega^i}$ for each $i \in \Z_d$, and let $j \in \Z_d$. Notice that if $k,\ell \in \Z_d$, then $\omega^{-jk}\sigma^k(x_\ell) = \omega^{-jk + k\ell}x_\ell = \omega^{(-j+\ell)k}x_\ell$. In particular, $\omega^{-jk}\sigma^k(x_j) = x_j$ for all $k \in \Z_d$. Therefore, applying $\omega^{-jk}\sigma^k$ to \eqref{equation:lin_ind} gives us an equation
\[\omega^{-jk}x_0 + \omega^{(-j+1)k} x_1 + \cdots + x_j + \cdots + \omega^{(-j-1)k}x_{d-1}=0.\] 


Summing over $\Z_d$, we find that
\[\sum_{i\neq j}\sum_{k\in\Z_d}\omega^{k(-j+i)}x_i + dx_j = 0.\] Once again, since $\sum_{k=0}^{d-1}\omega^{k(-j+i)}=0$ for all $i\neq j$, we can conclude that $x_j=0$. Thus, $N= \bigoplus_{i=0}^{d-1}N^{\omega^i}$ as desired.

The second statement holds since a finitely generated $R^\sharp$-module $N$ is MCM over $R^\sharp$ if and only if it is free as an $S$-module.
\end{proof}

\begin{defprop} \label{thm:functors_A_B} Let $R, R^\sharp$, $R^\sharp[\sigma]$, and $\omega$ be as above. Let $\mu \in S$ be any root of $x^d + 1 \in S[x]$.
\begin{enumerate}[label= (\roman*)]
    \item Let $N$ be an MCM $R^\sharp[\sigma]$-module and $N^{\omega^i}$ be as in Lemma \ref{thm:decomp_of_N} for each $i\in \Z_d$. Define a matrix factorization $\mathcal A(N) \in \MatFac{S}{d}{f}$ as follows. Multiplication by $\mu z$ defines an $S$-linear homomorphism 
    \[N^{\omega^i} \to N^{\omega^{i+1}}\] for all $i \in \Z_d$. The composition 
    \[\begin{tikzcd}
    N^{\omega^{d-1}} \rar{\mu z} &N^{1} \rar{\mu z} &N^{\omega} \rar{\mu z} &\cdots \rar{\mu z} &N^{\omega^{d-2} \rar{\mu z}} &N^{\omega^{d-1}}
    \end{tikzcd}\] is equal to $-z^d = f$ times the identity on $N^{\omega^{d-1}}$. It follows that the above homomorphisms and free $S$-modules form a matrix factorization of $f$ in $\MatFac{S}{d}{f}$ which we denote as $\mc A(N)$. For a homomorphism $g: N \to M$ of MCM $R^\sharp[\sigma]$-modules, define a morphism of matrix factorizations \[\mc A(g) = \left(g\vert_{N^{\omega^{d-1}}},g\vert_{N^{\omega^{d-2}}},\dots,g\vert_{N^{1}}\right)\] where $g\vert_{N^{\omega^i}}$ denotes the restriction of $g$ to the $S$-direct summand $N^{\omega^i}$ of $N$. Thus, we have a functor
    \[\mc A: \MCM_{\sigma}(R^\sharp) \to \MatFac{S}{d}{f}.\]
    
    \item Let $X = (\phi_1:F_2 \to F_1,\dots,\phi_d:F_1 \to F_d) \in \MatFac{S}{d}{f}$. Define \[\mc B(X) = F_d\oplus F_{d-1} \oplus \cdots \oplus F_1\] as an $S$-module which has the structure of a $R^\sharp[\sigma]$-module by defining the action of $z$ as
    \[z \cdot (x_d,x_{d-1},\dots,x_1) = \left(\mu^{-1}\phi_d(x_1),\mu^{-1}\phi_{d-1}(x_d),\dots,\mu^{-1}\phi_1(x_2)\right)\] and the action of $\sigma$ as
    \[\sigma\cdot(x_d,x_{d-1},\dots,x_1) = (x_d,\omega x_{d-1},\omega^2 x_{d-2},\dots,\omega^{d-1}x_1),\]
    for any $x_i \in F_i$, $i\in\Z_d$. For a morphism of matrix factorizations $\alpha =(\alpha_1,\alpha_2,\dots,\alpha_d): X \to X'$, where $X' = (\phi_1':F_2'\to F_1',\dots, \phi_d':F_1'\to F_d')$, define $\mc B(\alpha): \mc B(X) \to \mc B(X')$ by 
    \[\mc B(\alpha)(x_d,x_{d-1},\dots,x_1) = (\alpha_d(x_d),\alpha_{d-1}(x_{d-1}),\dots,\alpha_1(x_1))\] for all $(x_d,x_{d-1}\dots,x_1) \in \mc B(X)$. Thus, we have a functor 
    \[\mc B: \MatFac{S}{d}{f} \to \MCM_{\sigma}(R^\sharp).\]
\end{enumerate}
\end{defprop}

\begin{proof} Several pieces of the definitions need justification. First we note that, since $-1$ has a $d$th root in $\bf k$, we may apply \cite[Corollary A.31]{leuschke_cohen-macaulay_2012} to obtain an element $\mu \in S$ such that $\mu^d = -1$.
\begin{enumerate}[label =(\roman*)] 
    \item Multiplication by $\mu z$ defines an $S$-linear map $N^{\omega^i} \to N^{\omega^{i+1}}$ for any $i \in \Z_d$ since $\mu \in S$ and
    \[\sigma( z x) = \sigma( z)\sigma(x) = \omega^{i+1} z x\] for all $x \in N^{\omega^i}$. Notice that $(\mu z)^d = \mu^d z^d = f \in R^\sharp$. Therefore, the composition
    \[\begin{tikzcd}
    N^{\omega^{i+1}} \rar{(\mu z)^{d-1}} &N^{\omega^{i}} \rar{\mu z} &N^{\omega^{i+1}}
    \end{tikzcd}\] equals $f\cdot 1_{N^{\omega^i}}$ for all $i \in \Z_d$. We know that each $N^{\omega^i}$ is a free $S$-module by Lemma \ref{thm:decomp_of_N} so, by applying Lemma \ref{thm:n_equals_m}, we have that $\rank_S(N^{\omega^i}) = \rank_S(N^{\omega^{i+1}})$ for all $i \in \Z_d$. This implies that $\mc A(N) \in \MatFac{S}{d}{f}$. 
    
    If $g: N \to M$ is a homomorphism of $R^\sharp[\sigma]$-modules, then $g(rx) = rg(x)$ and $\sigma g(x) = g(\sigma(x))$ for all $x \in N$ and $r\in R^\sharp$. It follows that $g\vert_{N^{\omega^i}}(N^{\omega^i})\subseteq M^{\omega^i}$ and the diagram
    \[\begin{tikzcd}
    N^{\omega^i} \dar[swap]{g\vert_{N^{\omega^i}}} \rar{\mu z} &N^{\omega^{i+1}} \dar{g\vert_{N^{\omega^{i+1}}}}\\
    M^{\omega^i} \rar{\mu z} &M^{\omega^{i+1}}
    \end{tikzcd}\] commutes for all $i \in \Z_d$. In other words, $\mc A(g)$ is a morphism of matrix factorizations $\mc A(N) \to \mc A(M)$.
    \item First we justify that the defined action of $z$ and $\sigma$ make $\mc B(X)$ a MCM $R^\sharp[\sigma]$-module. Recall the homomorphisms $\theta_{ki}^X: F_i \to F_k$, from Section \ref{section:Frobenius}, which are given by
    \[\theta_{ki}^X =\begin{cases}
                    1_{F_k} & i=k\\
                    \phi_k\phi_{k+1}\cdots\phi_{i-2}\phi_{i-1} & i\neq k.\\
\end{cases}\] We will drop the superscript $X$ for the rest of this proof. We claim that for any $s\ge 1$ and $(x_d,x_{d-1},\dots,x_1) \in \mc B(X)$,
    \[z^s \cdot(x_d,x_{d-1},\dots,x_1) = f^q\mu^{-r}(\theta_{d(d+r)}(x_{d+r}),\dots, \theta_{1(1+r)}(x_{1+r}))\] where $s = dq + r$, $q \ge 0$, and $0 \leq r < d$.
    When $s=1$ the formula is precisely the defined action of $z$ on $\mc B(X)$. Assume the claim is true for $s=dq + r \ge 1$ with $q \ge 0$ and $0\leq r < d$ and consider multiplication by $z^{s+1}$. By induction we have that
    \[\begin{split}z^{s+1}\cdot (x_d,\dots ,x_1) &= z\cdot f^q\mu^{-r}((\theta_{d(d+r)}(x_{d+r}),\dots, \theta_{1(1+r)}(x_{1+r}))\\
    &= f^q\mu^{-(r+1)}(\phi_d\theta_{1(1+r)}(x_{1+r}),\dots,\phi_1\theta_{2(2+r)}(x_{2+r})).
    \end{split}\] If $r = d-1$, then $\phi_{k-1}\theta_{k(k+r)} = f \cdot 1_{F_{k-1}}$ for each $k \in \Z_d$ and therefore
    \[z^{s+1} = f^{q+1}\mu^{-(r+1)}(x_{d},x_{d-1},\dots,x_1).\]
    If $0 \leq r < d-1$, then $0\leq r+1 < d$ and therefore $\phi_{k-1}\theta_{k(k+r)} = \theta_{(k-1)(k+r)}$ for each $k \in \Z_d$. In this case,
    \[z^{s+1} = f^q\mu^{-(r+1)}(\theta_{d(1+r)}(x_{1+r}),\dots,\theta_{1(2+r)}(x_{2+r}))\] which completes the induction. It follows that multiplication by $z^d$ is given by
    \[ z^d \cdot (x_d,\dots,x_1)  = f\mu^{-d}(x_d,\dots,x_1).\] By definition, $\mu^{-d} = -1$. Thus, $(f+z^d)\mc B(X) = 0$, that is, $\mc B(X)$ is an $R^\sharp$-module. In fact, since $\mc B(X)$ is free as an $S$-module, it is MCM as an $R^\sharp$-module.
    
    In order to show that $\mc B(X)$ has the structure of an $R^\sharp[\sigma]$-module, we must show that $\sigma(rx)=\sigma(r)\sigma(x)$ for all $r \in R^\sharp$ and $x \in \mc B(X)$. It suffices to show that $\sigma(zx) = \sigma(z)\sigma(x)$ for all $x \in \mc B(X)$. This follows since
    \[\begin{split}
        \sigma(z)\sigma(x) &= \omega z \cdot (x_d,\omega x_{d-1},\dots,\omega^{d-1}x_1)\\
        &= z \cdot (\omega x_d, \omega^2 x_{d-1},\dots, x_1)\\
        &= (\mu^{-1} \phi_d(x_1), \mu^{-1}\omega\phi_{d-1}(x_d),\dots, \mu^{-1}\omega^{d-1}\phi_1(x_2))\\
        &= \sigma\left(\mu^{-1}\phi_d(x_1), \mu^{-1}\phi_{d-1}(x_d),\dots,\mu^{-1}\phi_1(x_2)\right)\\
        &= \sigma(zx)
    \end{split}\] for any $x=(x_d,x_{d-1},\dots,x_1) \in \mc B(X)$. Hence, $ \mc B(X) \in \MCM_{\sigma}(R^\sharp)$.
    
    Finally, we must show that $\mc B(\alpha)$ forms a homomorphism of $R^\sharp[\sigma]$-modules. This is straightforward to verify by recalling that $\alpha_k\phi_k = \phi_k'\alpha_{k+1}$ for all $k \in \Z_d$.

\end{enumerate}

\end{proof}

\begin{thm}\label{thm:R_hash_sigma_equiv_MFd}
The functors $\mc A : \MCM_{\sigma}(R^\sharp) \to \MatFac{S}{d}{f}$ and $\mc B: \MatFac{S}{d}{f} \to \MCM_{\sigma}(R^\sharp)$ are naturally inverse and establish an equivalence of the categories $\MCM_\sigma(R^\sharp)\approx \MatFac{S}{d}{f}$.
\end{thm}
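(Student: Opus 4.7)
The plan is to verify that the compositions $\mc A \mc B$ and $\mc B \mc A$ are each naturally isomorphic to the identity functor on their respective categories; since the functors have already been constructed, the work is purely bookkeeping with the $\sigma$-eigenspace decomposition and the action of $z$.

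First I would tackle $\mc A \mc B(X) \cong X$ for $X = (\phi_1:F_2\to F_1, \ldots, \phi_d:F_1\to F_d)$. From the definition of $\mc B$, we have $\mc B(X) = F_d \oplus F_{d-1} \oplus \cdots \oplus F_1$ and the action of $\sigma$ multiplies the summand $F_{d-j}$ by $\omega^j$. Hence the decomposition of Lemma \ref{thm:decomp_of_N} reads $\mc B(X)^{\omega^j} = F_{d-j}$ for $j \in \Z_d$. A direct computation with the defining formula for $z$ shows that multiplication by $\mu z$ sends a vector supported in the slot $F_{d-j}$ to the vector supported in $F_{d-j-1}$ obtained by applying $\phi_{d-j-1}$. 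Therefore the matrix factorization $\mc A(\mc B(X))$ is the composition $F_1 \to F_d \to F_{d-1} \to \cdots \to F_2 \to F_1$ whose arrows are exactly $\phi_d, \phi_{d-1}, \ldots, \phi_1$ --- in other words $\mc A(\mc B(X)) = X$ on the nose. The identity tuple $(1_{F_1}, 1_{F_2}, \ldots, 1_{F_d})$ is then a morphism in $\MatFac{S}{d}{f}$, and it is obviously natural in $X$.

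Next I would check $\mc B \mc A(N) \cong N$ for $N \in \MCM_\sigma(R^\sharp)$. By the computation above, setting $F_k = N^{\omega^{d-k}}$ exhibits $\mc A(N)$, so that $\mc B(\mc A(N)) = N^1 \oplus N^\omega \oplus \cdots \oplus N^{\omega^{d-1}}$ which equals $N$ as an $S$-module by Lemma \ref{thm:decomp_of_N}. It remains to see that this canonical $S$-module identification is $R^\sharp[\sigma]$-linear. For $\sigma$: by construction the $k$-th summand of $\mc B(\mc A(N))$ is $N^{\omega^{k-1}}$, on which $\mc B$'s formula makes $\sigma$ act by $\omega^{k-1}$, which agrees with the original $\sigma$-action on $N^{\omega^{k-1}}$. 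For $z$: the formula for the $z$-action on $\mc B(\mc A(N))$ takes $x \in N^{\omega^{j}}$ to $\mu^{-1}\phi_k(x)$ for the appropriate $\phi_k$; but under our identification that $\phi_k$ is exactly multiplication by $\mu z$ in $N$, so $\mu^{-1}\phi_k(x) = z \cdot x$. Thus the identity on the underlying set is an $R^\sharp[\sigma]$-isomorphism $\mc B \mc A(N) \to N$, again manifestly natural in $N$.

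Finally I would observe that, because morphisms in $\MatFac{S}{d}{f}$ are componentwise and morphisms in $\MCM_\sigma(R^\sharp)$ respect the $\sigma$-eigenspace decomposition, naturality of the two unit/counit isomorphisms in the ambient morphisms reduces to noting that both $\mc A$ and $\mc B$ are defined by restriction to the same underlying $S$-linear maps, and the identifications above are compatible with these restrictions. This completes the equivalence.

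The main obstacle I expect is simply keeping the cyclic indices straight --- the identification $F_k = N^{\omega^{d-k}}$ has to be chosen consistently so that the first arrow in $\mc A(N)$ (namely $N^{\omega^{d-1}} \xrightarrow{\mu z} N^1$) really plays the role of $\phi_d: F_1 \to F_d$ in the matrix factorization convention used in the paper. Once that bookkeeping is pinned down, both compositions come out to be the identity on the nose, so the natural isomorphisms are literally the identity maps and there is essentially no nontrivial content beyond the verification that $\mu^d z^d = f$ used in Definition and Proposition \ref{thm:functors_A_B}.
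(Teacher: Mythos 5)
Your proposal is correct and follows essentially the same route as the paper's proof: identify $\mc B(X)^{\omega^j}$ with (the image of) $F_{d-j}$, check that the $\mu z$-action recovers the $\phi_k$'s, and show the canonical map $\mc B\mc A(N)\to N$ given by summing the eigenspace components is $R^\sharp[\sigma]$-linear, with naturality reduced to the compatibility of restrictions. The only small caveat is that $\mc A\mc B(X)$ is not literally equal to $X$ ``on the nose'' --- the component modules of $\mc A\mc B(X)$ are submodules $q_k(F_k)\subseteq\mc B(X)$, so the unit isomorphism is the tuple of inclusion maps $(q_1,\dots,q_d)$ rather than the identity tuple; the paper handles this explicitly, but the underlying computation is the same as yours.
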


\begin{proof}
Let $X = (\phi_1:F_2 \to F_1,\dots,\phi_d:F_1 \to F_d) \in \MatFac{S}{d}{f}$. Then, $\mc B(X) = F_d \oplus F_{d-1}\oplus \cdots \oplus F_1$ with the action of $\sigma$ on $\mc B(X)$ given by $\sigma(x_d,\dots,x_1) = (x_d,\omega x_{d-1},\dots,\omega^{d-1}x_1)$ for each $x_i \in F_i$. For each $i\in \Z_d$, the $S$-module $F_i$ is embedded into $\mc B(X)$ via the natural inclusion map which we will denote as 
\[q_i : F_i \to \mc B(X)\]
Notice that the action of $\sigma$ on $\mc B(X)$ implies that
\[\mc B(X)^{\omega^{d-i}} = \{(0,\dots,0,x_i,0,\dots,0) : x_i \in F_i\} = q_i(F_i).\] Therefore, the matrix factorization $\mc A\mc B(X)$ is given by
\[\begin{tikzcd}
\mc B(X)^{\omega^{d-1}} \rar{\mu z} &\mc B(X)^{1} \rar{\mu z} & \cdots \rar{\mu z} &\mc B(X)^{\omega^{d-2}} \rar{\mu z} &\mc B(X)^{\omega^{d-1}}
\end{tikzcd}\] which is isomorphic to $X$ via the morphism
\[\begin{tikzcd}
F_1 \dar{q_1} \rar{\phi_d} &F_d \dar{q_d} \rar{\phi_{d-1}} &\cdots \rar{\phi_2} &F_2 \dar{q_2} \rar{\phi_1} &F_1 \dar{q_1}\\
\mc B(X)^{\omega^{d-1}} \rar{\mu z} &\mc B(X)^{1} \rar{\mu z} & \cdots \rar{\mu z} &\mc B(X)^{\omega^{d-2}} \rar{\mu z} &\mc B(X)^{\omega^{d-1}}. 
\end{tikzcd}\] Explicitly, the diagram commutes since if $k \in \Z_d$ and $x \in F_{k+1}$, then
\[\mu z q_{k+1}(x) = \mu q_{k}(\mu^{-1}\phi_k(x)) = q_k\phi_k(x).\]

To show $\mc A\mc B$ is naturally isomorphic to the identity, suppose we have a morphism $\alpha= (\alpha_1,\dots,\alpha_d): X \to X'$ where $X' = (\phi_1':F_2'\to F_1', \dots,\phi_d':F_1'\to F_d') \in \MatFac{S}{d}{f}$. The matrix factorizations $X'$ is isomorphic to $\mc A\mc B(X')$ via the morphism $(q_1',q_2',\dots,q_d')$ where $q_i': F_i' \to \mc B(X')$ is the natural inclusion. Recall that the homomorphism $\mc B(\alpha)$ is given by
\[\mc B(\alpha)(x_d,x_{d-1},\dots,x_1) = (\alpha_d(x_d),\alpha_{d-1}(x_{d-1}),\dots,\alpha_1(x_1)).\] Applying the functor $\mc A$ forms a morphism of matrix factorizations by restricting $\mc B(\alpha)$ to the submodules $\mc B(X)^{\omega^{d-i}}$. The images of these restrictions land in the submodules $\mc B(X')^{\omega^{d-i}}$. In other words, the $k$th component of the morphism $\mc A\mc B(\alpha)$ is given by the composition
\[\begin{tikzcd}
\mc B(X)^{\omega^{d-k}} \rar{p_k} &F_k \rar{\alpha_k} &F_k' \rar{q_k'} &\mc B(X')^{\omega^{d-k}}
\end{tikzcd}\] where $p_k$ is the natural projection onto $F_k$. Therefore,
\[\begin{split}
    \mc A \mc B(\alpha) \circ (q_1,q_2,\dots, q_d) & = (q_1'\alpha_1p_1,q_2'\alpha_2p_2,\dots,q_d'\alpha_dp_d)\circ (q_1,q_2,\dots,q_d)\\
    &= (q_1'\alpha_1,q_2'\alpha_2,\dots,q_d'\alpha_d)
\end{split}\] and this implies the commutativity of the diagram
\[\begin{tikzcd}
X \rar{\alpha} \dar[swap]{(q_1,\dots,q_d)} &X' \dar{(q_1',\dots,q_d')}\\
\mc A\mc B(X) \rar{\mc A \mc B(\alpha)} &\mc A \mc B(X').
\end{tikzcd}\]

Next, let $N$ be an MCM $R^\sharp[\sigma]$-module. As an $S$-module, \[\mc B\mc A(N) = N^1 \oplus N^{\omega} \oplus \cdots \oplus N^{\omega^{d-1}}\] and in fact the natural $S$-isomorphism $\Psi_N: \mc B \mc A(N) \to N$ given by $(n_0,n_1,\dots,n_{d-1}) \mapsto \sum_{i \in \Z_d} n_i$ is also an $R^\sharp[\sigma]$-homomorphism. To see this, let $(n_0,n_1,\dots,n_{d-1}) \in \mc B\mc A(N)$. Then, $\Psi_N$ is a $R^\sharp$-homomorphism since
\[\begin{split}
\Psi_N(z\cdot(n_0,n_1,\dots,n_{d-1})) &= \Psi_N(\mu^{-1}\mu z n_{d-1}, \mu^{-1}\mu z n_0,\dots,\mu^{-1}\mu z n_{d-1})\\
&= \Psi_N(zn_{d-1},zn_0,\dots,zn_{d-1})\\
&= z(n_0+n_1+\cdots n_{d-1})\\
&=z\Psi_N(n_0,n_1,\dots,n_{d-1})
\end{split}\] and a $R^\sharp[\sigma]$-homomorphism since
\[\begin{split}
    \Psi_N(\sigma(n_0,n_1,\dots,n_{d-1})) &= \Psi_N(n_0,\omega n_1,\dots, \omega^{d-1}n_{d-1})\\
    &= n_0 + \omega n_1 + \cdots + \omega^{d-1}n_{d-1}\\
    &= \sigma(n_0) + \sigma(n_1) + \cdots + \sigma(n_{d-1})\\
    &=\sigma(n_0 + n_1 + \cdots n_{d-1})\\
    &=\sigma(\Psi_N(n_0,n_1,\dots,n_{d-1})).
\end{split}\]

\end{proof}

In Section \ref{section:KRS} we showed that the category of MCM modules over the endomorphism ring of the projective object $\mc P=\mc P_1\oplus \mc P_2\oplus \cdots \oplus \mc P_d$ is equivalent to the category of matrix factorizations of $f$ with $d\ge 2$ factors. Together with Proposition \ref{thm:R_hash_sigma_equiv_MFd} we have an induced equivalence of the module categories $\MCM_\sigma(R^\sharp)$ and $\MCM(\End_{\MatFac{S}{d}{f}}(\mc P)^{\textup{op}})$. In fact, the two rings are isomorphic which we will see below. Of course, both statements require that the characteristic of $\bf k$ does not divide $d$. Recall, also from Section \ref{section:KRS}, that $\Gamma=\End_{\MatFac{S}{d}{f}}(\mc P)^{\textup{op}}$ is a free $S$-module with basis given by the elements $\{e_{ij}\}_{i,j \in \Z_d}$. The main rules for multiplication in $\Gamma$ are given in Lemmas \ref{thm:properties_of_eij}, \ref{thm:z^s}, and \ref{thm:e_ij=prod}.

\begin{prop}\label{thm:R_sharp_sigma_iso_gamma}
The rings $R^\sharp[\sigma]$ and $\Gamma = \End_{\MatFac{S}{d}{f}}(\mc P)^{\textup{op}}$ are isomorphic.
\end{prop}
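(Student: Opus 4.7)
I would prove this by writing down an explicit $S$-algebra homomorphism $\Phi \colon R^\sharp[\sigma] \to \Gamma$ and then showing it is bijective by a rank count.

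Concretely, define $\Phi$ on generators by $\Phi(z) = \mu^{-1}\tilde z$, where $\tilde z = \sum_{i \in \Z_d} e_{i(i-1)} \in \Gamma$, and $\Phi(\sigma) = \sum_{j \in \Z_d} \omega^{-j} e_{jj}$, and extend $S$-linearly and multiplicatively. Since $R^\sharp[\sigma]$ is generated as an $S$-algebra by $z$ and $\sigma$ subject only to the relations $z^d + f = 0$, $\sigma^d = 1$, and $\sigma z = \omega z \sigma$, it suffices to verify these three identities for the proposed images. The first follows from Lemma~\ref{thm:z^s} (which gives $\tilde z^d = f \cdot 1_\Gamma$) combined with $\mu^d = -1$. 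The second is immediate from the orthogonal idempotent relations of Lemma~\ref{thm:properties_of_eij} and $\omega^{jd}=1$. The third, $\Phi(\sigma)\Phi(z) = \omega\, \Phi(z)\Phi(\sigma)$, reduces via Lemma~\ref{thm:properties_of_eij} to $e_{jj}\tilde z = e_{(j+1)j}$ and $\tilde z\, e_{jj} = e_{j(j-1)}$, after which a single relabeling of the summation index produces the extra factor of $\omega$.

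To show $\Phi$ is surjective, I would use the discrete Fourier identity $\frac{1}{d}\sum_{k \in \Z_d} \omega^{(j-i)k} = \delta_{ij}$ to compute
\[
\Phi\bigg(\tfrac{1}{d}\sum_{k \in \Z_d} \omega^{jk}\sigma^k\bigg) = \tfrac{1}{d}\sum_{i,k} \omega^{(j-i)k}e_{ii} = e_{jj},
\]
which places every diagonal idempotent in $\Ima\Phi$. Multiplying on the left by $\Phi(\mu z) = \tilde z$ then gives $e_{j(j-1)} = \tilde z\cdot e_{jj} \in \Ima\Phi$ for every $j$, and Lemma~\ref{thm:e_ij=prod} writes an arbitrary $e_{ij}$ as a product of elements of the form $e_{\ell(\ell-1)}$. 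Hence the full $S$-basis of $\Gamma$ lies in the image.

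Finally, I would compare ranks: $R^\sharp$ is free of rank $d$ over $S$ with basis $1, z, \ldots, z^{d-1}$, so $R^\sharp[\sigma] = \bigoplus_{i=0}^{d-1} R^\sharp \sigma^i$ is free of rank $d^2$, and $\Gamma$ is free of rank $d^2$ by Lemma~\ref{thm:gens_of_gamma}. Any surjective $S$-linear map between free $S$-modules of equal finite rank is an isomorphism, so $\Phi$ is an isomorphism of $S$-algebras. The main obstacle I anticipate is purely bookkeeping: choosing the indexing so that the skew commutation $\sigma z = \omega z \sigma$ lines up with the one-sided multiplication rules for the $e_{ij}$ in Lemma~\ref{thm:properties_of_eij}, and keeping the sign from $\mu^d = -1$ consistent with $z^d = -f$ (rather than $z^d = f$). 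Once the images of $z$ and $\sigma$ are correctly chosen, the rest is a mechanical verification using the structural lemmas already established in Section~\ref{section:KRS}.
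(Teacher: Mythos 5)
Your proposal is correct and follows essentially the same path as the paper: an explicit $S$-algebra map sending $\sigma \mapsto \sum_j \omega^{-j}e_{jj}$ and $z$ to a scalar multiple of $\tilde z = \sum_i e_{i(i-1)}$, verification of the three defining relations, surjectivity via the discrete Fourier identity plus Lemma~\ref{thm:e_ij=prod}, and a rank count over $S$. The only (immaterial) difference is that you use $\mu^{-1}$ where the paper uses $\mu$; since $\mu^d = -1$ forces $\mu^{-d} = -1$ as well, both choices give $\Phi(z)^d = -f \cdot 1_\Gamma$, and your framing of well-definedness via the presentation of $R^\sharp[\sigma]$ is interchangeable with the paper's approach of extending $S$-linearly from the basis $\{z^i\sigma^j\}$ and checking multiplicativity.
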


\begin{proof}
The set $\{z^i\sigma^j\}_{i,j\in\Z_d}$ forms a basis for $R^\sharp[\sigma]$ over $S$. As in \ref{thm:functors_A_B}, let $\mu \in S$ be any root of $x^d+1 \in S[x]$. Define a map $\psi: R^\sharp[\sigma] \to \Gamma$ by $\psi(z)= \mu \sum_{i\in \Z_d}e_{i(i-1)}$ and $\psi(\sigma) =\sum_{i\in\Z_d}\omega^{-i}e_{ii}$. Extend $\psi$ multiplicatively, that is, define $\psi(z^i\sigma^j) = \psi(z)^i\psi(\sigma)^j$ for all $i,j \in \Z_d$. Since $\{z^i\sigma^j\}_{i,j\in\Z_d}$ is an $S$-basis, $\psi$ extends uniquely to a well defined $S$-linear homomorphism. Using the fact that $\sigma z = \sigma(z) \sigma = \omega z \sigma \in R^\sharp[\sigma]$, it is not hard to see that $\psi$ is also a homomorphism of rings.

As $S$-modules, both $R^\sharp[\sigma]$ and $\Gamma$ are free of rank $d^2$. Therefore, to conclude that $\psi$ is an isomorphism, it suffices to check surjectivity. First, we show that the element $e_{kk}$ is in the image of $\psi$ for each $k \in \Z_d$. Indeed, if $j \in \Z_d$, then
\[\psi(\sigma^j) = \sum_{i\in \Z_d}\omega^{-ji}e_{ii}.\] Thus, for any $k \in \Z_d$,
\[\begin{split}
    \psi\left(\frac{1}{d}\sum_{j\in \Z_d}\omega^{jk}\sigma^j\right) &= \frac{1}{d}\sum_{j\in \Z_d}\omega^{jk}\psi(\sigma)^j\\
    &=\frac{1}{d}\sum_{j\in \Z_d}\sum_{i\in\Z_d}\omega^{j(k-i)}e_{ii}\\
    &=\frac{1}{d}\sum_{i\neq k}\sum_{j\in \Z_d}\omega^{j(k-i)}e_{ii} + \frac{1}{d}\sum_{j\in \Z_d}e_{kk}\\
    &=e_{kk}.
\end{split}\]
Hence, the elements $e_{11},e_{22},\dots,e_{dd},$ and $\sum_{i \in \Z_d}e_{i(i-1)}$ are in the image of $\psi$. It follows that $e_{k(k-1)} \in \Ima\psi$ for all $k \in \Z_d$ since $e_{kk}\sum_{i\in\Z_d}e_{i(i-1)} = e_{k(k-1)}$ by Lemma \ref{thm:properties_of_eij} \ref{thm:properties_of_eij:part_4}. Finally, Lemma \ref{thm:e_ij=prod} allows us to conclude that $e_{ij} \in \Ima \psi$ for all $i,j \in \Z_d$, implying that $\psi$ is surjective as desired.
\end{proof}

\section{The syzygy of a matrix factorization}\label{section:OmegaMF}

Let $(S,\mf n,\bf k)$ be a complete regular local ring, $f \in S$ a non-zero non-unit, and set $R=S/(f)$. For a matrix factorization $X = (\phi_1:F_2\to F_1,\dots,\phi_d :F_1 \to F_d)$, we study its syzygy, $\Omega_{\MatFac{S}{d}{f}}(X) = (\Omega_1,\Omega_2,\dots,\Omega_d)$, defined in Section \ref{section:Frobenius}. Note that, by Section \ref{section:KRS}, the additional assumption of $S$ being complete allows us to utilize the Krull-Remak-Schmidt Theorem in $\MatFac{S}{d}{f}$. As in previous sections, let $\mc P = \bigoplus_{i \in \Z_d}\mc P_i$ and $\Gamma = \End_{\MatFac{S}{d}{f}}(\mc P)^{\textup{op}}$.

The exact structure on $\MatFac{S}{d}{f}$ ensures that $\Omega_{MF_S^d(f)}(X)$ is \textit{stably equivalent} to any matrix factorization $K$ such that there exists a short exact sequence \[\begin{tikzcd}K \rar[tail] &P \rar[two heads] &X\end{tikzcd}\] where $P$ is projective. This follows from the appropriate version of Schanuel's Lemma in $\MatFac{S}{d}{f}$.

\begin{lem}(Schanuel's Lemma) \label{thm:Schanuel}
Let $X \in \MatFac{S}{d}{f}$ and suppose
\[\begin{tikzcd}
&K \rar[tail]{q} &P \rar[two heads]{p} &X
\end{tikzcd} \qquad\text{and} \begin{tikzcd}
&K' \rar[tail]{q'} &P' \rar[two heads]{p'} &X
\end{tikzcd}\] are short exact sequences of matrix factorizations with $P$ and $P'$ projective. Then, $P \oplus K' \cong K \oplus P'$. \qed
\end{lem}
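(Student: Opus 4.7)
The plan is to imitate the classical proof of Schanuel's lemma in an exact (here Frobenius) category, with the pullback construction doing the main work. Given the two admissible epimorphisms $p : P \twoheadrightarrow X$ and $p' : P' \twoheadrightarrow X$, I will first form their pullback
\[
\begin{tikzcd}
E \dar[two heads]{\pi'} \rar{\pi} &P \dar[two heads]{p}\\
P' \rar[two heads]{p'} &X
\end{tikzcd}
\]
in $\MatFac{S}{d}{f}$. Axiom $[\textup{E}2^{\textup{op}}]$ guarantees that this pullback exists and that $\pi : E \twoheadrightarrow P$ is an admissible epimorphism; by the dual argument applied to the other leg, $\pi' : E \twoheadrightarrow P'$ is also an admissible epimorphism.

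Next, I will identify the kernels of $\pi$ and $\pi'$. A standard diagram chase in an exact category (or, concretely, working componentwise in the free $S$-modules and using that $\beta = (\beta_1,\dots,\beta_d)$ being a kernel of $\alpha = (\alpha_1,\dots,\alpha_d)$ is a componentwise condition) shows that the pullback reproduces the kernels of the original admissible epimorphisms: the kernel of $\pi$ is isomorphic to $K'$ and the kernel of $\pi'$ is isomorphic to $K$. Thus one obtains two short exact sequences of matrix factorizations
\[
\begin{tikzcd}
K' \rar[tail] &E \rar[two heads]{\pi} &P
\end{tikzcd}
\qquad \text{and} \qquad
\begin{tikzcd}
K \rar[tail] &E \rar[two heads]{\pi'} &P'.
\end{tikzcd}
\]

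The final step is to split both sequences. Since $P$ is projective in $\MatFac{S}{d}{f}$, the admissible epimorphism $\pi$ admits a section, so the first sequence splits and $E \cong K' \oplus P$. Similarly, projectivity of $P'$ splits the second sequence and gives $E \cong K \oplus P'$. Combining these two isomorphisms yields
\[
P \oplus K' \;\cong\; E \;\cong\; K \oplus P',
\]
which is the desired conclusion.

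The only nonroutine point is verifying that the pullback in the exact category $\MatFac{S}{d}{f}$ really does have kernel of $\pi$ isomorphic to the original $K'$ (and symmetrically for $\pi'$). Because short exact sequences in $\MatFac{S}{d}{f}$ are defined componentwise as short exact sequences of free $S$-modules, this reduces to the analogous statement about pullbacks of surjections of $S$-modules, which is classical. Once this identification is in hand, projectivity of $P$ and $P'$ delivers the splittings immediately, so no further work with the shift functor or the explicit form of the $\mc P_i$ is required.
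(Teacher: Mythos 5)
Your proof is correct and takes essentially the same approach the paper has in mind: the paper omits the argument and cites \cite[Proposition 2.12]{buhler_exact_2010}, which is precisely the fact you invoke — that the pullback of an admissible epimorphism along an arbitrary morphism is again an admissible epimorphism with an isomorphic kernel. Forming the pullback $E$ of $p$ and $p'$, identifying $\ker\pi \cong K'$ and $\ker\pi' \cong K$, and splitting both resulting short exact sequences by projectivity of $P$ and $P'$ is the standard Schanuel argument in an exact category, and it is exactly what the cited reference delivers.
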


We omit the proof as it follows from \cite[Proposition 2.12]{buhler_exact_2010}. The next Lemma follows directly from Lemma \ref{thm:Schanuel}, KRS, and the sequences \eqref{equation:enough_proj} and \eqref{equation:enough_inj}.




\begin{lem}\label{thm:omega_additive}
Let $X,X' \in \MatFac{S}{d}{f}$. Then,
\begin{enumerate}
    \item $\Omega_{\MatFac{S}{d}{f}}(X\oplus X') \cong \Omega_{\MatFac{S}{d}{f}}(X) \oplus \Omega_{\MatFac{S}{d}{f}}(X')$
    \item $\Omega_{\MatFac{S}{d}{f}}^-(X \oplus X') \cong \Omega_{\MatFac{S}{d}{f}}^-(X) \oplus \Omega_{\MatFac{S}{d}{f}}^-(X')$
    \item $\Omega_{\MatFac{S}{d}{f}}(X)$ (respectively $\Omega_{\MatFac{S}{d}{f}}^-(X)$) is projective if and only if $X$ is projective.
\end{enumerate} \qed
\end{lem}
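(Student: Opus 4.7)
The plan is to derive all three assertions from Schanuel's Lemma (Lemma \ref{thm:Schanuel}), the Krull-Remak-Schmidt Theorem in $\MatFac{S}{d}{f}$ (which holds by Theorem \ref{thm:MF_equiv_MCMgamma} since $S$ is complete), Proposition \ref{thm:proj_iff_inj}, and the explicit shape of $P(X)$ and $I(X)$ from Construction \ref{construction:enough}. The crucial elementary observation is that if $X$ has size $n$ and $X'$ has size $n'$, then directly from the construction
\[ P(X\oplus X')\cong\bigoplus_{i=1}^{d}\mc P_i^{n+n'}\cong P(X)\oplus P(X'), \]
and, analogously, $I(X\oplus X')\cong I(X)\oplus I(X')$.

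For (1), I would take the direct sum of the defining short exact sequences \eqref{equation:enough_proj} for $X$ and $X'$ to obtain a short exact sequence
\[ \Omega_{\MatFac{S}{d}{f}}(X)\oplus \Omega_{\MatFac{S}{d}{f}}(X')\rightarrowtail P(X)\oplus P(X')\twoheadrightarrow X\oplus X' \]
with projective middle term, and compare this with the sequence \eqref{equation:enough_proj} for $X\oplus X'$. Schanuel's Lemma gives
\[ P(X\oplus X')\oplus\Omega_{\MatFac{S}{d}{f}}(X)\oplus\Omega_{\MatFac{S}{d}{f}}(X')\cong\Omega_{\MatFac{S}{d}{f}}(X\oplus X')\oplus P(X)\oplus P(X'). \]
Since $P(X\oplus X')\cong P(X)\oplus P(X')$, KRS allows me to cancel the common projective summand, yielding (1). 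Part (2) is obtained by the same argument applied to \eqref{equation:enough_inj} in place of \eqref{equation:enough_proj}, using the analogous decomposition of $I(X\oplus X')$.

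For (3), I would use that $\MatFac{S}{d}{f}$ is Frobenius, so projectives and injectives coincide by Proposition \ref{thm:proj_iff_inj}. If $X$ is projective, the admissible epimorphism $P(X)\twoheadrightarrow X$ of \eqref{equation:enough_proj} splits by lifting $1_X$, so $\Omega_{\MatFac{S}{d}{f}}(X)$ is a direct summand of the projective $P(X)$, and summands of projectives in an exact category are projective. Conversely, if $\Omega_{\MatFac{S}{d}{f}}(X)$ is projective then it is also injective, so the admissible monomorphism $\Omega_{\MatFac{S}{d}{f}}(X)\rightarrowtail P(X)$ splits, exhibiting $X$ as a summand of the projective $P(X)$. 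The statement for $\Omega^{-}_{\MatFac{S}{d}{f}}(X)$ follows by running the same argument on \eqref{equation:enough_inj} instead, again using Proposition \ref{thm:proj_iff_inj} to swap roles of projective and injective as needed.

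No step here is a serious obstacle: Schanuel gives (1) and (2) up to projective (resp.\ injective) summands, the explicit description of $P(X)$ and $I(X)$ in Construction \ref{construction:enough} makes those summands match on the nose, and KRS does the cancellation. The only mild care needed is verifying the identification $P(X\oplus X')\cong P(X)\oplus P(X')$, which is immediate from inspecting the block structure of the matrices $D_k$ (and $D_k'$) built there.
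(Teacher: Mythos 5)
Your proof is correct and takes essentially the same route the paper intends: the paper itself states that the lemma ``follows directly from Lemma \ref{thm:Schanuel}, KRS, and the sequences \eqref{equation:enough_proj} and \eqref{equation:enough_inj},'' and you have simply spelled out those details, including the key identification $P(X\oplus X')\cong P(X)\oplus P(X')$ (and its injective counterpart) that makes the Schanuel comparison cancel cleanly under KRS, and the splitting arguments via Proposition \ref{thm:proj_iff_inj} for part (3).
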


As a consequence, both $\Omega_{\MatFac{S}{d}{f}}(-)$ and $\Omega_{\MatFac{S}{d}{f}}^-(-)$ define additive functors from the stable category $\underline{\textup{MF}}_S^d(f)$ to itself.

\begin{prop}\label{thm:omega_iso_omega-1}
Let $X \in \MatFac{S}{d}{f}$ be of size $n$ and $\mc P = \bigoplus_{i=1}^d \mc P_i$. Then, we have isomorphisms
\begin{enumerate}[itemsep = 2pt, label = (\roman*)]
    \item \label{thm:omega_iso_omega-1:part1} $\Omega_{\MatFac{S}{d}{f}}(\Omega_{\MatFac{S}{d}{f}}^{-}(X)) \cong X \oplus \mc P^{(d-2)n}$,
    \item \label{thm:omega_iso_omega-1:part2} $\Omega_{\MatFac{S}{d}{f}}^{-}(\Omega_{\MatFac{S}{d}{f}}(X)) \cong X \oplus \mc P^{(d-2)n}$, and
    \item \label{thm:omega_iso_omega-1:part3} $\Omega_{\MatFac{S}{d}{f}}(X) \cong \Omega_{\MatFac{S}{d}{f}}^{-}(X)$.
\end{enumerate}
\end{prop}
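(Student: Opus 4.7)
The plan is to dispatch parts (i) and (ii) formally by Schanuel's Lemma (Lemma~\ref{thm:Schanuel}) and its Frobenius-dual, together with Krull--Remak--Schmidt (available in $\MatFac{S}{d}{f}$ since $S$ is complete), and to handle part (iii) by an explicit construction, which I expect to be the main obstacle.

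For (i), I will apply Schanuel's Lemma to the two short exact sequences both ending at $\Omega^{-}_{\MatFac{S}{d}{f}}(X)$: the sequence \eqref{equation:enough_proj} from Construction~\ref{construction:enough} starting at $\Omega^{-}_{\MatFac{S}{d}{f}}(X)$ itself, whose projective middle term $P(\Omega^{-}_{\MatFac{S}{d}{f}}(X))$ is isomorphic to $\mc P^{(d-1)n}$ (since $\Omega^{-}_{\MatFac{S}{d}{f}}(X)$ has size $(d-1)n$), and the sequence \eqref{equation:enough_inj} applied to $X$, whose injective middle term $I(X)$ is isomorphic to $\mc P^n$. Schanuel then produces
\[\mc P^{(d-1)n} \oplus X \cong \Omega_{\MatFac{S}{d}{f}}\Omega^{-}_{\MatFac{S}{d}{f}}(X) \oplus \mc P^n,\]
and cancelling a factor of $\mc P^n$ using KRS gives (i). Part (ii) is the Frobenius-dual argument: the opposite exact category of a Frobenius category is again Frobenius (projectives and injectives swap), so there is a dual version of Schanuel's Lemma for pairs of short exact sequences sharing a common source. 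I apply it to the two sequences starting at $\Omega_{\MatFac{S}{d}{f}}(X)$, namely \eqref{equation:enough_proj} for $X$ and \eqref{equation:enough_inj} for $\Omega_{\MatFac{S}{d}{f}}(X)$, and cancel by KRS as before.

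The hard part is (iii). Parts (i) and (ii) only force $\Omega_{\MatFac{S}{d}{f}}(X)$ and $\Omega^{-}_{\MatFac{S}{d}{f}}(X)$ to be isomorphic in the stable category $\underline{\textup{MF}}_S^d(f)$, which yields a relation of the form $\Omega_{\MatFac{S}{d}{f}}(X) \oplus P \cong \Omega^{-}_{\MatFac{S}{d}{f}}(X) \oplus Q$ in $\MatFac{S}{d}{f}$ with possibly distinct projective corrections $P, Q$, and I see no purely formal way to match these. Instead I will write the isomorphism down by hand. For each $k \in \Z_d$, define $\alpha_k \colon \widehat F_k \to \widehat F_k$ to be the upper-triangular $(d-1)\times(d-1)$ block matrix whose $(i,i)$-block is $1_{F_{k+i}}$ and whose $(i,j)$-block for $i<j$ is $\theta^X_{(k+i)(k+j)}$. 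Each $\alpha_k$ is invertible because it is upper triangular with identity diagonal blocks, and the identity $\alpha_k \Omega_k = \Omega_k^{-} \alpha_{k+1}$ that forces $(\alpha_1,\dots,\alpha_d)$ to be a morphism in $\MatFac{S}{d}{f}$ will be verified block-by-block. Since the non-zero blocks of $\Omega_k$ and $\Omega_k^{-}$ are concentrated in the first row, last column, and sub-diagonal, the identity reduces to two telescoping cancellations -- one in the first row, one in the last column -- using the defining relations $\phi_k \theta^X_{(k+1)i} = \theta^X_{ki}$ (for $i \neq k$) and $\phi_k \theta^X_{(k+1)k} = f \cdot 1_{F_k}$; every other block matches directly.
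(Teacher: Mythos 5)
Your proof follows the paper's own argument essentially verbatim: (i) and (ii) via Schanuel's Lemma (and its Frobenius dual for sequences sharing a source) together with KRS cancellation of the common $\mc P^n$, and (iii) via the identical upper-unitriangular isomorphism $\alpha_k$ with $(i,j)$-block $\theta^X_{(k+i)(k+j)}$ for $i<j$. There is nothing to add or correct.
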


\begin{proof} Since $\Omega_{\MatFac{S}{d}{f}}^{-}(X)$ is of size $(d-1)n$, there is a short exact sequence
\[\begin{tikzcd}
\Omega_{\MatFac{S}{d}{f}}(\Omega_{\MatFac{S}{d}{f}}^{-}(X)) \rar[tail] &\mc P^{(d-1)n} \rar[two heads] &\Omega_{\MatFac{S}{d}{f}}^{-}(X).
\end{tikzcd}\] By applying Schanuel's lemma to this sequence and the sequence \eqref{equation:enough_inj}, we find that
\[\Omega_{\MatFac{S}{d}{f}}(\Omega_{\MatFac{S}{d}{f}}^{-}(X)) \oplus \mc P^n \cong X \oplus \mc P^{(d-1)n}.\] We may cancel one copy of $\mc P^n$ from both sides by KRS to obtain the first statement. Dually, the second statement follows from the injective version of Schanuel's Lemma.

In order to prove \ref{thm:omega_iso_omega-1:part3}, we construct an explicit isomorphism. For each $k \in \Z_d$ define an $S$-homomorphism $\alpha_k: \widehat F_k \to \widehat F_k$ by 
    \[\alpha_k = \begin{pmatrix}
1_{F_{k+1}} &\theta_{(k+1)(k+2)}^X &\theta_{(k+1)(k+3)}^X &\cdots &\theta_{(k+1)(k-1)}^X\\
0           &1_{F_{k+2}}           &\theta_{(k+2)(k+3)}^X &\cdots &\theta_{(k+2)(k-1)}^X\\
0           &0                     &1_{F_{k+3}}           &\ddots &\vdots\\
\vdots      &\vdots                &\vdots                &\ddots &\theta_{(k-2)(k-1)}^X\\
0           &0                     &\cdots                &       &1_{F_{k-1}}
\end{pmatrix}.\] It is not hard to see that each $\alpha_k$ is an isomorphism and that the diagram 
\[\begin{tikzcd}
\widehat F_{k+1} \dar{\alpha_{k+1}}\rar{\Omega_k} &\widehat F_k \dar{\alpha_k}\\
\widehat F_{k+1} \rar{\Omega_k^{-}}               &\widehat F_k
\end{tikzcd}\] commutes for all $k \in \Z_d$. Hence, we have an isomorphism of matrix factorizations $(\alpha_1,\dots,\alpha_d): \Omega_{\MatFac{S}{d}{f}}(X) \to \Omega_{\MatFac{S}{d}{f}}^{-}(X)$.
\end{proof}

\begin{remark} In the case $d=2$, no projective summands occur in the first two isomorphisms. This agrees with what we saw in Example \ref{example:d=2_3} which said that the syzygy and cosyzygy operations are isomorphic to the shift functor:
\[\Omega_{\MatFac{S}{2}{f}}(\phi,\psi) \cong (\psi,\phi) \cong \Omega_{\MatFac{S}{2}{f}}^-(\phi,\psi)\] for any $(\phi,\psi)\in \MatFac{S}{2}{f}$. From this we can see that all three statements of Proposition \ref{thm:omega_iso_omega-1} are immediate when $d=2$. In particular, the isomorphism in Proposition \ref{thm:omega_iso_omega-1} \ref{thm:omega_iso_omega-1:part3} is actually the identity. In contrast, the isomorphism constructed in Proposition \ref{thm:omega_iso_omega-1} \ref{thm:omega_iso_omega-1:part3} when $d=3$ is
\[\begin{tikzcd}[ampersand replacement=\&, column sep = 4em, row sep= 4em]
F_2\oplus F_3 \dar{\begin{pmatrix}1_{F_2} & \phi_2\\ 0 &1_{F_3}\\ \end{pmatrix}} \rar{\begin{pmatrix}
-\phi_1 & -\phi_1\phi_2\\
1_{F_2} &0\\
\end{pmatrix}} \&F_1\oplus F_2 \dar{\begin{pmatrix}1_{F_1} & \phi_1\\ 0 &1_{F_2}\\ \end{pmatrix}} \rar{\begin{pmatrix}
-\phi_3 & -\phi_3\phi_1\\
1_{F_1} &0\\
\end{pmatrix}} \&F_3\oplus F_1 \dar{\begin{pmatrix}1_{F_3} & \phi_3\\ 0 &1_{F_1}\\ \end{pmatrix}} \rar{\begin{pmatrix}
-\phi_2 & -\phi_2\phi_3\\
1_{F_3} &0\\
\end{pmatrix}} \&F_2\oplus F_3 \dar{\begin{pmatrix}1_{F_2} & \phi_2\\ 0 &1_{F_3}\\ \end{pmatrix}}\\
F_2\oplus F_3 \rar[swap]{\begin{pmatrix}
0 & -\phi_1\phi_2\\
1_{F_2} &-\phi_2\\
\end{pmatrix}} \&F_1\oplus F_2 \rar[swap]{\begin{pmatrix}
0 & -\phi_3\phi_1\\
1_{F_1} &-\phi_1\\
\end{pmatrix}} \&F_3\oplus F_1 \rar[swap]{\begin{pmatrix}
0 & -\phi_2\phi_3\\
1_{F_3} &-\phi_3\\
\end{pmatrix}} \&F_2\oplus F_3
\end{tikzcd}.\]
\end{remark}

The next Proposition uses Proposition \ref{thm:omega_iso_omega-1} to show that each object in $\MatFac{S}{d}{f}$ has a projective resolution which is periodic of period at most 2. A projective resolution in this context is with respect to the exact structure on $\MatFac{S}{d}{f}$ (see \cite[12.1]{buhler_exact_2010}).

\begin{prop}\label{thm:periodicity_in_dMF}
Let $X \in \MatFac{S}{d}{f}$. Then, $X$ has a projective resolution which is periodic with period at most 2:
\[\begin{tikzcd}
\cdots \rar{q} &I(X) \rar{p} &P(X) \rar{q} &I(X) \rar{p} &P(X) \rar[two heads] &X.
\end{tikzcd}\]
\end{prop}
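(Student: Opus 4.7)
The plan is to splice together the two short exact sequences produced by Construction~\ref{construction:enough}, using the isomorphism $\Omega_{\MatFac{S}{d}{f}}(X) \cong \Omega^-_{\MatFac{S}{d}{f}}(X)$ from Proposition~\ref{thm:omega_iso_omega-1}\ref{thm:omega_iso_omega-1:part3} to identify a syzygy with a cosyzygy. From Construction~\ref{construction:enough} we have admissible short exact sequences
\[
\Omega_{\MatFac{S}{d}{f}}(X) \xrightarrow{\epsilon^X} P(X) \xrightarrow{\rho^X} X
\qquad\text{and}\qquad
X \xrightarrow{\lambda^X} I(X) \xrightarrow{\eta^X} \Omega^-_{\MatFac{S}{d}{f}}(X),
\]
and both $P(X)$ and $I(X)$ are projective (equivalently injective) by Lemma~\ref{thm:P_i_proj_inj}, since each is isomorphic to $\bigoplus_{i=1}^d \mc P_i^n$.

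My definitions for the two maps in the resolution are as follows. Let $\tau: \Omega^-_{\MatFac{S}{d}{f}}(X) \xrightarrow{\sim} \Omega_{\MatFac{S}{d}{f}}(X)$ be the isomorphism constructed (explicitly) in the proof of Proposition~\ref{thm:omega_iso_omega-1}\ref{thm:omega_iso_omega-1:part3}. Set
\[
p \;=\; \epsilon^X \circ \tau \circ \eta^X \colon I(X) \longrightarrow P(X),
\qquad
q \;=\; \lambda^X \circ \rho^X \colon P(X) \longrightarrow I(X).
\]
I then claim that splicing the two short exact sequences above along these maps yields the desired $2$-periodic projective resolution, i.e.\ that
\[
\cdots \xrightarrow{\,q\,} I(X) \xrightarrow{\,p\,} P(X) \xrightarrow{\,q\,} I(X) \xrightarrow{\,p\,} P(X) \xrightarrow{\,\rho^X\,} X
\]
can be rewritten as the splice of short exact sequences $\Omega_{\MatFac{S}{d}{f}}(X) \rightarrowtail P(X) \twoheadrightarrow X$ and $X \rightarrowtail I(X) \twoheadrightarrow \Omega_{\MatFac{S}{d}{f}}(X)$ (the latter obtained from the cosyzygy sequence by applying the isomorphism $\tau$).

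The verification reduces to three compatibility checks, each essentially immediate from the factorizations above:
\begin{enumerate}[label=(\alph*)]
\item $\ker(\rho^X) = \operatorname{im}(\epsilon^X) = \operatorname{im}(p)$, since $\tau\eta^X$ is an admissible epimorphism onto $\Omega_{\MatFac{S}{d}{f}}(X)$ and $\epsilon^X$ is an admissible monomorphism;
\item $\ker(p) = \ker(\eta^X) = \operatorname{im}(\lambda^X) = \operatorname{im}(q)$, because $\tau$ and $\epsilon^X$ are (respectively) an isomorphism and a monomorphism;
\item $\ker(q) = \ker(\rho^X) = \operatorname{im}(\epsilon^X) = \operatorname{im}(p)$, symmetrically.
\end{enumerate}
Since each factorization is a composition of an admissible epimorphism followed by an admissible monomorphism, the spliced long sequence is acyclic in the exact-category sense of \cite[12.1]{buhler_exact_2010}, and both $P(X)$ and $I(X)$ are projective, so this is a genuine projective resolution of $X$ in $\MatFac{S}{d}{f}$.

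I expect the only place that requires care is point (a)-(c): one must confirm that the isomorphism $\tau$ from Proposition~\ref{thm:omega_iso_omega-1}\ref{thm:omega_iso_omega-1:part3} is compatible with the admissible mono/epi structure so that its insertion between $\eta^X$ and $\epsilon^X$ still produces a kernel-cokernel pair. Since $\tau$ is defined component-wise by invertible upper-triangular $S$-matrices and the admissibility of monomorphisms and epimorphisms in $\MatFac{S}{d}{f}$ depends only on the component-wise $S$-module structure (Lemma~\ref{thm:admiss_morphs_form}), this compatibility is automatic.
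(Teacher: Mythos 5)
Your proof is correct and follows essentially the same route as the paper: splice the two short exact sequences of Construction~\ref{construction:enough}, using the isomorphism between $\Omega_{\MatFac{S}{d}{f}}(X)$ and $\Omega^-_{\MatFac{S}{d}{f}}(X)$ from Proposition~\ref{thm:omega_iso_omega-1}\ref{thm:omega_iso_omega-1:part3} to close the loop, with $p = \epsilon^X \circ \tau \circ \eta^X$ and $q = \lambda^X \circ \rho^X$ exactly as in the paper (the only cosmetic difference is that the paper's $\alpha$ goes $\Omega \to \Omega^-$ so your $\tau$ is its inverse). Your checks (a)--(c) and the remark that insertion of the component-wise isomorphism $\tau$ preserves admissibility are exactly the verification the paper leaves implicit.
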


\begin{proof} Set $\Omega(X) = \Omega_{\MatFac{S}{d}{f}}(X)$ and $\Omega^-(X) = \Omega^-_{\MatFac{S}{d}{f}}(X)$. Let $\alpha: \Omega(X) \to \Omega^-(X)$ be the isomorphism constructed in Proposition \ref{thm:omega_iso_omega-1}. Then, we have two diagrams

\begin{equation}\label{diagram:exactness_1}\begin{tikzcd}
I(X)\drar[two heads,swap]{\alpha^{-1}\eta^X} \ar[rr] & &P(X) \drar[two heads, swap]{\rho^X} \ar[rr] && I(X)\\
&\Omega(X) \urar[swap, tail]{\epsilon^X} & &X \urar[tail,swap]{\lambda^X}
\end{tikzcd}\end{equation}
and
\begin{equation}\label{diagram:exactness_2}\begin{tikzcd}
P(X)\drar[two heads,swap]{\rho^X} \ar[rr] & &I(X) \drar[two heads, swap]{\alpha^{-1}\eta^X} \ar[rr] && P(X)\\
&X \urar[swap, tail]{\lambda^X} & &\Omega(X) \urar[tail,swap]{\epsilon^X}
\end{tikzcd}.\end{equation}
The middle sequence in \eqref{diagram:exactness_2} is short exact since we have a commutative diagram
\[\begin{tikzcd}
X \dar[equals] \rar[tail]{\lambda^X} &I(X) \dar[equals] \rar[two heads]{\alpha^{-1}\eta^X} &\Omega(X) \dar{\alpha}\\
X \rar[tail]{\lambda^X} &I(X) \rar[two heads]{\eta^X} &\Omega^-(X)
\end{tikzcd}\] with vertical isomorphisms.
The desired resolution follows by splicing together \eqref{diagram:exactness_1} and \eqref{diagram:exactness_2}, that is, by setting $p=\epsilon^X\alpha^{-1}\eta^X$ and $q= \lambda^X\rho^X$.
\end{proof}

\begin{cor}\label{thm:periodicity_in_MCM_Gamma}
Let $\mc P = \bigoplus_{i \in \Z_d} \mc P_i$ and $\Gamma = \End_{\MatFac{S}{d}{f}}(\mc P)^{\textup{op}}$. Then, every finitely generated left $\Gamma$-module has a projective resolution which is eventually periodic of period at most 2.
\end{cor}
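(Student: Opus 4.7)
The plan is to use the categorical equivalence $\MCM(\Gamma) \approx \MatFac{S}{d}{f}$ of Theorem \ref{thm:MF_equiv_MCMgamma} to transfer the period-$2$ resolution from the matrix factorization side (Proposition \ref{thm:periodicity_in_dMF}) over to $\Gamma$-modules, after reducing to the MCM case. Given any finitely generated $\Gamma$-module $M$, I would take a projective resolution $\cdots \to Q_1 \to Q_0 \to M \to 0$ over $\Gamma$. Since $\Gamma$ is free of rank $d^2$ over $S$ (Lemma \ref{thm:gens_of_gamma}), each $Q_i$ is a free $S$-module. Standard dimension-shifting via the Auslander--Buchsbaum formula over the regular ring $S$ gives $\pd_S(M) \leq \dim S$, so after $\dim S$ steps the syzygy $K = \syz^{\dim S}_\Gamma(M)$ has $\pd_S(K) = 0$; in other words, $K$ is free over $S$ and hence lies in $\MCM(\Gamma)$.

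Now set $X = \mc H(K) \in \MatFac{S}{d}{f}$, the matrix factorization corresponding to $K$ under the equivalence of Section \ref{section:KRS}. Proposition \ref{thm:periodicity_in_dMF} supplies a projective resolution of $X$ in the exact category $(\MatFac{S}{d}{f}, \mc E_d)$ of period at most $2$, whose terms alternate between the matrix factorizations $P(X)$ and $I(X)$, each isomorphic to $\mc P^n$ for some $n$. Applying the functor $\mc F = \Hom_{\MatFac{S}{d}{f}}(\mc P, -)$ from Section \ref{section:KRS} term-by-term should then produce a period-at-most-$2$ projective resolution of $\mc F(X) \cong K$ whose terms are copies of $\mc F(\mc P^n) = \Gamma^n$, a free $\Gamma$-module. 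To conclude, one splices this periodic tail onto the initial finite segment $Q_{\dim S - 1} \to \cdots \to Q_0 \to M$, giving an eventually period-at-most-$2$ projective resolution of $M$ over $\Gamma$.

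The main obstacle is checking that $\mc F$ genuinely converts a projective resolution in $\mc E_d$ into a projective resolution of $\Gamma$-modules. The projectivity of each term is easy, since $\mc F(\mc P) = \Gamma$ and $\mc F$ is additive. The key compatibility is the exactness point: every admissible short exact sequence $X' \rightarrowtail X \twoheadrightarrow X''$ in $\mc E_d$ is degree-wise split over $S$ by Lemma \ref{thm:admiss_morphs_form}, and this forces $\mc F$ applied to the sequence to remain short exact as $\Gamma$-modules. Equivalently, the equivalence of Theorem \ref{thm:MF_equiv_MCMgamma} identifies $\mc E_d$ with the class of all short exact sequences of MCM $\Gamma$-modules, since any such sequence is automatically $S$-split (the quotient being $S$-free). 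Once this compatibility is recorded, the transfer of projective resolutions is immediate and the corollary follows.
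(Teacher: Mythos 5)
Your argument is correct and follows essentially the same route as the paper's proof: reduce to the MCM case by passing to a $\dim S$-th syzygy, use the equivalence of Theorem \ref{thm:MF_equiv_MCMgamma} to pull back the period-$2$ resolution of Proposition \ref{thm:periodicity_in_dMF} via the exact functor $\mc F = \Hom_{\MatFac{S}{d}{f}}(\mc P, -)$, and splice. The only cosmetic differences are that the paper invokes the Depth Lemma rather than Auslander--Buchsbaum to show the syzygy is $S$-free, and justifies exactness of $\mc F$ directly from projectivity of $\mc P$ rather than from the $S$-split characterization of admissible sequences, but these are interchangeable.
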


\begin{proof}
Let $N$ be a finitely generated $\Gamma$-module and set $r = \dim S$. Let $M = \syz_\Gamma^r(N)$ be an arbitrary $r$th syzygy of $N$ over $\Gamma$, and let
\[\begin{tikzcd}[column sep = small]
0 \rar &M \rar &P_{r-1} \rar &P_{r-2} \rar &\cdots \rar &P_1 \rar &P_0 \rar &N \rar &0
\end{tikzcd}\] be the first $r-1$ steps of a projective resolution of $N$ for some finitely generated projective $\Gamma$-modules $P_i$, $i=0,1,\dots,r-1$. Recall that finitely generated projective $\Gamma$-modules are in $\MCM(\Gamma)$, that is, they are finitely generated free $S$-modules. Thus, the Depth Lemma implies that $\dep_S(M) = r$. Since MCM $S$-modules are free, we have that $M \in \MCM(\Gamma)$ as well. Now, by Section \ref{section:KRS}, there exists $X \in \MatFac{S}{d}{f}$ of size $n$ such that $\mc F(X) = \Hom_{\MatFac{S}{d}{f}}(\mc P,X) \cong M$. Since $\mc P$ is projective in $\MatFac{S}{d}{f}$, the functor $\mc F$ is exact. In particular, applying $\mc F$ to the periodic resolution constructed in Proposition \ref{thm:periodicity_in_dMF} yields an exact sequence of MCM $\Gamma$-modules
\[\begin{tikzcd}
\cdots \rar{\mc F(p)} &\mc F(P(X)) \rar{\mc F(q)} &\mc F(I(X)) \rar{\mc F(p)} &\mc F(P(X)) \rar &M \rar &0.
\end{tikzcd}\] Actually, this is a free resolution of $M$ over $\Gamma$ since $\mc F(P(X)) \cong \mc F(\bigoplus_{i \in \Z_d}\mc P_i^n) \cong \Gamma^n$ and similarly $\mc F(I(X)) \cong \Gamma^n$. Thus, splicing together this periodic free resolution of $M$ and the projective resolution of $N$, we get an eventually periodic resolution of $N$.
\end{proof}

Proposition \ref{thm:periodicity_in_MCM_Gamma} and Lemma \ref{thm:inj_dim_Gamma} below give a homological description of $\Gamma$ which resembles that of a (commutative) hypersurface ring. Recall that a Noetherian ring $\Lambda$ is said to be \textit{Iwanaga-Gorenstein} if $\textup{injdim}_\Lambda\Lambda$ and $\textup{injdim}_{\Lambda^{\textup{op}}}\Lambda$ are both finite.

\begin{lem}\label{thm:inj_dim_Gamma}
Let $r$ be the Krull dimension of $S$. The ring $\Gamma = \End_{\MatFac{S}{d}{f}}(\mc P)^{\textup{op}}$ is Iwanaga-Gorenstein of dimension $r$.
\end{lem}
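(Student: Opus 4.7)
The plan is to show that $\Gamma$ is a Frobenius $S$-algebra, then transport an injective resolution of $S$ to one of $\Gamma$ of the same length. Using the isomorphism $\Gamma \cong R^\sharp[\sigma]$ from Proposition \ref{thm:R_sharp_sigma_iso_gamma}, I would first observe that $\Gamma$ is a free $S$-module with basis $\{z^i \sigma^j : 0 \leq i,j < d\}$; in particular, $\Gamma$ is maximal Cohen-Macaulay as an $S$-module, of rank $d^2$.

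Next, I would define the $S$-linear form $\lambda \colon \Gamma \to S$ by $\lambda(z^i \sigma^j) = \delta_{i, d-1}\delta_{j, 0}$ and verify that the pairing $(a,b) \mapsto \lambda(ab)$ is non-degenerate. Using the skew product rule $(z^i\sigma^j)(z^k\sigma^\ell) = \omega^{jk} z^{i+k}\sigma^{j+\ell}$ together with $z^d = -f$ to reduce powers of $z$ exceeding $d-1$, a direct calculation shows $\lambda\bigl((z^i\sigma^j)(z^k\sigma^\ell)\bigr) = \omega^{jk}$ exactly when $i+k = d-1$ and $j+\ell \equiv 0 \pmod{d}$, and zero otherwise. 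The dual basis to $\{z^i\sigma^j\}$ is then $\{\omega^{-j(d-1-i)}z^{d-1-i}\sigma^{-j}\}$, exhibiting $\Gamma$ as a Frobenius $S$-algebra. It follows that the $S$-linear maps $\Gamma \to \Hom_S(\Gamma, S)$ defined by $a \mapsto (x \mapsto \lambda(xa))$ and $a \mapsto (x \mapsto \lambda(ax))$ are isomorphisms of left and of right $\Gamma$-modules, respectively.

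Since $\Gamma$ is free as a right $S$-module, the functor $\Hom_S(\Gamma, -)$ from $S$-modules to left $\Gamma$-modules is exact; by the adjunction $\Hom_\Gamma(M, \Hom_S(\Gamma, E)) \cong \Hom_S(M, E)$, it also carries injective $S$-modules to injective left $\Gamma$-modules. Since $S$ is regular local of dimension $r$, it admits an injective resolution $0 \to S \to E^0 \to \cdots \to E^r \to 0$ of length $r$. Applying $\Hom_S(\Gamma, -)$ and identifying $\Hom_S(\Gamma, S) \cong \Gamma$ yields an injective resolution of $\Gamma$ of length $r$ over itself on the left, so $\textup{injdim}_\Gamma \Gamma \leq r$; the symmetric argument on the right gives $\textup{injdim}_{\Gamma^{\textup{op}}} \Gamma \leq r$.

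For the reverse inequality, I would exhibit a simple left $\Gamma$-module $M$, necessarily annihilated by $\mathfrak{n}$ since $\mathfrak{n}\Gamma \subseteq \rad \Gamma$, and compute $\textup{Ext}^r_\Gamma(M, \Gamma) \cong \textup{Ext}^r_S(M, S) \neq 0$ using the Grothendieck spectral sequence for $\Hom_\Gamma(M, -) \circ \Hom_S(\Gamma, -) = \Hom_S(M, -)$, which degenerates in view of the projectivity of $\Gamma$ over $S$ and the isomorphism $\Hom_S(\Gamma, S) \cong \Gamma$; the non-vanishing of $\textup{Ext}^r_S(M, S)$ for a finite-length $S$-module follows from local duality. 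The principal technical obstacle is the verification in the second paragraph that $\lambda$ is a non-degenerate Frobenius form with the stated bimodule duality, which requires careful bookkeeping of the twisted multiplication $\sigma \cdot s = \sigma(s)\sigma$ in $R^\sharp[\sigma]$.
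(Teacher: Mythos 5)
Your Frobenius-algebra route is genuinely different from the paper's. The paper never mentions a Frobenius form; instead it shows (via the equivalence $\mc H$) that $\Gamma$ is relatively injective in $\MCM(\Gamma)$, and then quotes Auslander's results on orders -- specifically \cite[Lemma 1.1, Lemma 5.1, 5.2]{auslander_isolated_1986} -- to get $\textup{injdim}_\Gamma\Gamma = \textup{injdim}_S S = r$ without ever writing down a free resolution or an Ext group. Your plan is more elementary and self-contained: exhibit $\Gamma \cong \Hom_S(\Gamma,S)$ as a left (and right) $\Gamma$-module via a non-degenerate form $\lambda$, push the length-$r$ injective resolution of $S$ through $\Hom_S(\Gamma,-)$, and get the lower bound from $\ext^r_\Gamma(M,\Gamma)\cong\ext^r_S(M,S)\neq 0$ via local duality. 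Each step of the upper and lower bound is correct.

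The gap is in the very first move. Proposition \ref{thm:R_sharp_sigma_iso_gamma} giving $\Gamma\cong R^\sharp[\sigma]$ is proved under the standing assumptions of Section \ref{section:d_branched_cover}: that $\mathbf{k}$ is algebraically closed and $\operatorname{char}\mathbf{k}\nmid d$ (so that $S$ contains a primitive $d$th root of unity $\omega$ and a $d$th root of $-1$). Lemma \ref{thm:inj_dim_Gamma} lives in Section \ref{section:OmegaMF}, where those hypotheses are \emph{not} imposed -- the paper's closing Remark even singles this out, noting that the $R^\sharp[\sigma]$ restatement of this lemma requires the extra assumptions. So as written, your argument only proves the lemma in a special case.

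This is fixable without invoking $R^\sharp[\sigma]$ at all. Work directly with the $S$-basis $\{e_{ij}\}_{i,j\in\Z_d}$ of $\Gamma$ and define $\lambda:\Gamma\to S$ by $\lambda\bigl(\textstyle\sum_{i,j}s_{ij}e_{ij}\bigr)=\sum_{i}s_{(i-1)i}$, i.e.\ the sum of the coefficients of the $e_{(i-1)i}$. Using Lemma \ref{thm:properties_of_eij} and Lemma \ref{thm:e_ij=prod} one checks that $e_{ij}e_{pq}=0$ unless $q=i$, and $e_{ij}e_{(j-1)i}=e_{(j-1)j}$ with no $f$-factor (the relevant exponents of the cyclic shift sum to $d-1$, not $2d-1$), so $\lambda(e_{ij}\,e_{(j-1)i})=1$ and the dual basis to $\{e_{ij}\}$ under $(a,b)\mapsto\lambda(ab)$ is $\{e_{(j-1)i}\}$. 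This gives the left- and right-module isomorphisms $\Gamma\cong\Hom_S(\Gamma,S)$ you need, with no hypothesis on $\mathbf k$ beyond what Section \ref{section:OmegaMF} already assumes. The rest of your argument then goes through verbatim.
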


\begin{proof} First we show that any short exact sequence
\begin{equation}\label{equation:Gamma_relative_injective}\begin{tikzcd}
0 \rar &\Gamma \rar{q} &M \rar{p} &M' \rar &0
\end{tikzcd}\end{equation} with $M,M' \in \MCM(\Gamma)$ splits. To see this, first note that $\mc H(\Gamma) = \mc H\mc F(\mc P) \cong \mc P$ is injective in $\MatFac{S}{d}{f}$. Therefore, the short exact sequence of matrix factorizations
\[\begin{tikzcd}
\mc H(\Gamma) \rar[tail]{\mc H(q)} &\mc H(M) \rar[two heads]{\mc H(p)} &\mc H(M')
\end{tikzcd}\] is split. Since $\mc H$ is full and faithful, there exists $t:M \to \Gamma$ such that $\mc H(t)\mc H(q) = 1_{\mc H(\Gamma)}$ and $tq = 1_\Gamma$ which implies that \eqref{equation:Gamma_relative_injective} is split.

To finish the proof, we apply results from \cite{auslander_isolated_1986} which apply to both $\Gamma$ and $\Gamma^{\textup{op}}$. By \cite[Lemma 1.1]{auslander_isolated_1986} we have that $\Gamma \cong \Hom_S(Q,S)$ for some projective $\Gamma^{\textup{op}}$-module $Q$. The functor $\Hom_S(\square,S):\MCM(\Gamma) \to \MCM(\Gamma^{\textup{op}})$ defines a duality and therefore
\[Q \cong \Hom_S(\Hom_S(Q,S),S) \cong \Hom_S(_\Gamma\Gamma,S).\] Thus, $\Hom_S(_\Gamma\Gamma,S)$ is $\Gamma^{\textup{op}}$-projective which happens if and only if $\Hom_S(_{\Gamma^{\textup{op}}}\Gamma,S)$ is $\Gamma$-projective according to \cite[Lemma 5.1]{auslander_isolated_1986}. In this case, \cite[5.2]{auslander_isolated_1986} says that $\text{injdim}_\Gamma\Gamma = \text{injdim}_SS$ which is equal to $r$ since $S$ is Gorenstein. Interchanging the roles of $\Gamma$ and $\Gamma^{\textup{op}}$ we find that $\text{injdim}_{\Gamma^{\textup{op}}}\Gamma = r$ as well.
\end{proof}

\begin{remark}
If $\bf k$ is algebraically closed and its characteristic does not divide $d$, both Proposition \ref{thm:periodicity_in_MCM_Gamma} and Lemma \ref{thm:inj_dim_Gamma} can be restated for the skew group algebra over the $d$-fold branched cover $R^\sharp[\sigma]$ defined in Section \ref{section:d_branched_cover}.
\end{remark}

Over the hypersurface ring $R=S/(f)$, the reduced syzygy of an indecomposable non-free MCM $R$-module is again indecomposable. Proposition \ref{thm:Herzog_part2} gives an analogous result for matrix factorizations. First, we describe the relationship between projective summands in $\MatFac{S}{d}{f}$ and free summands in $\MCM(R)$. Recall that a matrix factorization $X \in \MatFac{S}{d}{f}$ is \textit{stable} if $\cok\phi_k$ is a stable MCM $R$-module for all $k \in \Z_d$.

\begin{prop}\label{thm:proj_summands}
Let $X = (\phi_1,\dots,\phi_d)\in \MatFac{S}{d}{f}$ and set $M_i = \cok\phi_i$ for all $i \in \Z_d$. Then, $X$ has a projective summand isomorphic to $\mc P_i$ if and only if $M_i$ has a free $R$-summand.
\end{prop}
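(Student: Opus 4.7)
The forward direction is immediate: if $X \cong X' \oplus \mc P_i$, then $\phi_i$ decomposes as $\phi_i' \oplus (f \cdot 1_S)$, and consequently $M_i \cong \cok \phi_i' \oplus R$.

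For the converse, the plan is to parameterize the relevant hom-spaces explicitly and reduce the existence of a splitting to a single scalar condition. A morphism $\alpha : \mc P_i \to X$ is determined by a single arbitrary element $v_i := \alpha_i(1) \in F_i$: the relations $\alpha_k = \phi_k \alpha_{k+1}$ for $k \neq i$ force $v_k = \theta_{ki}^X(v_i)$ for all $k \neq i$, and the remaining relation $f \alpha_i = \phi_i \alpha_{i+1}$ is automatic by Lemma \ref{thm:shifts_of_X}. Dually, a morphism $\beta : X \to \mc P_i$ is freely parameterized by a single arbitrary $\beta_{i+1} \in \Hom_S(F_{i+1}, S)$, with $\beta_k = \beta_{i+1} \theta_{(i+1)k}^X$ for $k \neq i+1$.

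Next, using the identities $\theta_{(i+1)k}^X \theta_{ki}^X = \theta_{(i+1)i}^X$, I verify that the $d$ equations $\beta_k(v_k) = 1_S$ required for $\beta \circ \alpha = 1_{\mc P_i}$ all collapse to the single scalar condition $\beta_{i+1}(\theta_{(i+1)i}^X(v_i)) = 1_S$. Hence $\mc P_i$ is a direct summand of $X$ if and only if $\theta_{(i+1)i}^X(F_i) \not\subset \mf n F_{i+1}$, that is, $\theta_{(i+1)i}^X$ is not a minimal homomorphism.

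Finally, I apply the two-factor case to $(\phi_i, \theta_{(i+1)i}^X) \in \MatFac{S}{2}{f}$, whose first cokernel is $M_i$. Since $M_i$ has a free summand by hypothesis, this pair is not stable, so by Proposition \ref{thm:Eisenbud_d=2}\ref{thm:Eisenbud_d=2:reduced} it is not reduced; combining with Proposition \ref{thm:Eisenbud_d=2}\ref{thm:Eisenbud_d=2:existence} and Krull-Remak-Schmidt in $\MatFac{S}{2}{f}$ yields a direct summand isomorphic to $(f,1)$, which exhibits a basis element of $F_{i+1}$ in the image of $\theta_{(i+1)i}^X$ and thus forces $\theta_{(i+1)i}^X$ to be non-minimal. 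The main technical obstacle is the collapse step of the previous paragraph: verifying that all $d$ conditions $\beta_k(v_k) = 1_S$ are equivalent to the single equation on $F_{i+1}$; once this is in hand, the rest is bookkeeping combined with the classical two-factor theory.
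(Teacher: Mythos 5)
Your proposal is correct, and it takes a genuinely different route from the paper. The paper proceeds by comparing $\phi_1$ against a minimal $S$-free presentation of $M_1 \cong M\oplus R$ to obtain isomorphisms $\alpha,\beta$ putting $(\phi_1,\phi_2\cdots\phi_d)$ in block-diagonal form, and then explicitly constructs a morphism $X \to \mc P_1$ which it verifies to be an \emph{admissible epimorphism}; since $\mc P_1$ is projective this epimorphism splits. Your approach instead parameterizes $\Hom(\mc P_i, X)$ by $v_i \in F_i$ and $\Hom(X, \mc P_i)$ by $\beta_{i+1}\in\Hom_S(F_{i+1},S)$ (the $\alpha$-parameterization agrees with the paper's Lemma \ref{thm:hom_P_X_free}; the $\beta$-parameterization is its dual, which the paper never states), collapses all $d$ conditions for $\beta\alpha = 1_{\mc P_i}$ to the single scalar $\beta_{i+1}(\theta_{(i+1)i}^X(v_i))=1_S$ via the composition rule $\theta_{(i+1)k}^X\theta_{ki}^X = \theta_{(i+1)i}^X$, and then deduces that $\mc P_i$ splits off precisely when $\theta_{(i+1)i}^X$ is non-minimal. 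This is a cleaner criterion, and it makes the reduction to the two-factor pair $(\phi_i, \theta_{(i+1)i}^X)$ completely transparent. All the algebra checks out.

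The one thing to flag: you invoke Krull--Remak--Schmidt in $\MatFac{S}{2}{f}$ for the final step, which restricts the argument to complete $S$. The paper explicitly avoids KRS in its proof of this proposition and remarks afterward that the result therefore holds over an arbitrary regular local ring. This loss of generality is not necessary in your approach either: the implication ``$M_i$ has a free $R$-summand $\Rightarrow \theta_{(i+1)i}^X$ is not minimal'' follows by comparing $\phi_i$ against the minimal presentation $\phi'\oplus (f\colon S\to S)$ of $M_i \cong M'\oplus R$; this yields isomorphisms making $\phi_i \cong \phi'\oplus f\oplus 1_{S^m}$, and since the second factor in a $2$-factor matrix factorization is determined by the first, $\theta_{(i+1)i}^X \cong \psi'\oplus 1_S \oplus f\cdot 1_{S^m}$, which has a unit entry. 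Replacing the KRS citation with this resolution comparison (essentially the same diagram the paper draws) would give you the result at full generality and would also make the ``yields a direct summand isomorphic to $(f,1)$'' step precise, since ``not reduced'' by itself only says that one of the two matrices is non-minimal, not which one.
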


\begin{proof} The statement holds when $d=2$ (for instance see \cite[7.5]{yoshino_cohen-macaulay_1990}). So, assume $d\ge 3$. One direction is immediate: If $X \cong X' \oplus \mc P_i$ for some $X'=(\phi_1',\dots,\phi_d')\in \MatFac{S}{d}{f}$ and $i \in \Z_d$, then $M_i \cong \cok\phi_i' \oplus R$. 

A matrix factorization $Y$ is a summand of $X$ if and only if $T^j(Y)$ is a summand of $T^j(X)$ for any $j \in \Z_d$. Therefore, for the converse, we may assume $i=1$. That is, assume $M_1 \cong M \oplus R$ for some MCM $R$-module $M$. By Proposition \ref{thm:Eisenbud_d=2} \ref{thm:Eisenbud_d=2:existence}, there exists $(\phi:G\to F,\psi:F \to G) \in\MatFac{S}{2}{f}$, with $\phi$ minimal, such that $\cok\phi \cong M$. Then, 
\[\begin{tikzcd}[ampersand replacement=\&]
0 \rar \&G\oplus S \rar{\begin{pmatrix}\phi & 0\\ 0 &f\end{pmatrix}}\&F\oplus S \rar \&M_1 \rar \&0
\end{tikzcd}\] is a minimal free resolution of $M_1$ over $S$. Thus, there exists isomorphisms $\alpha$ and $\beta$ and a commutative diagram
\[\begin{tikzcd}[ampersand replacement=\&,column sep=1cm]
0 \rar \&F_2 \dar{\beta} \rar{\phi_1} \&F_1\dar{\alpha} \rar \&M_1 \dar[equals] \rar \&0\\
0 \rar \&G\oplus S \oplus S^m \rar[swap]{\scalebox{.8}{\(\begin{pmatrix}\phi & 0 &0\\ 0 &f &0\\ 0 &0 &1_{S_m}\end{pmatrix}\)}}\&F\oplus S \oplus S^m \rar \&M_1 \rar \&0
\end{tikzcd}\] for some $m\ge 0$. In other words, we have an isomorphism of matrix factorizations in $\MatFac{S}{2}{f}$:
\[(\phi_1,\phi_2\phi_3\cdots\phi_d)\cong\left(\scalebox{.8}{\(\begin{pmatrix}\phi & 0 &0\\ 0 &f &0\\ 0 &0 &1_{S_m}\end{pmatrix},\begin{pmatrix}\psi & 0 &0\\ 0 &1 &0\\ 0 &0 &f\cdot 1_{S_m}\end{pmatrix}\)}\right).\] The isomorphisms $\alpha$ and $\beta$ also give us an isomorphism of matrix factorizations in $\MatFac{S}{d}{f}$:
\[\begin{split}X &\cong (\alpha\phi_1,\phi_2,\dots,\phi_{d-1},\phi_d\alpha^{-1})\\
                 &\cong (\alpha\phi_1\beta^{-1},\beta\phi_2,\dots\phi_{d-1},\phi_d\alpha^{-1}).
\end{split}\] Let $p_1: F\oplus S \oplus S^m \to S$ and $p_2: G\oplus S \oplus S^m \to S$ be projection onto the middle components of $F\oplus S \oplus S^m$ and $G\oplus S\oplus S^m$ respectively. Consider the diagram
\[\scalebox{.9}{\begin{tikzcd}[ampersand replacement = \&, row sep=3em]
F\oplus S \oplus S^m \dar{p_1} \rar{\phi_d\alpha^{-1}} \&F_d \dar{p_2\beta\phi_2\phi_3\cdots\phi_{d-1}} \rar{\phi_{d-1}} \&\cdots \rar{\phi_4} \&F_4 \dar{p_2\beta\phi_2\phi_3} \rar{\phi_3} \&F_3 \dar{p_2\beta\phi_2} \rar{\beta\phi_2} \&G\oplus S \oplus S^m \dar{p_2} \rar{\alpha\phi_1\beta^{-1}}\&F\oplus S\oplus S^m \dar{p_1}\\
S \rar{1} \&S \rar{1} \&\cdots \rar{1} \&S \rar{1} \&S \rar{1} \&S \rar{f} \&S.
\end{tikzcd}}\] The two right most squares commute, the first since $\alpha\phi_1\beta^{-1} = \scalebox{.7}{\(\begin{pmatrix}\phi & 0 &0\\ 0 &f &0\\ 0 &0 &1_{S_m}\end{pmatrix}\)}$ and the second by construction. Similarly, for $k=3,4,\dots,d-1$, the square
\[\begin{tikzcd}
F_{k+1} \dar[swap]{p_2\beta\phi_2\phi_3\cdots\phi_{k-1}\phi_k} \rar{\phi_k} &F_k \dar{p_2\beta\phi_2\phi_3\cdots\phi_{k-1}}\\
S \rar{1} &S
\end{tikzcd}\] also commutes. Since $p_1\alpha\phi_1\beta^{-1} = fp_2$, we have that
\[\begin{split}
    f p_1 = p_1 f &= p_1\alpha\phi_1\beta^{-1}\beta\phi_2\phi_3\cdots\phi_{d-1}\phi_d\alpha^{-1}\\
                  &= f p_2  \beta\phi_2\phi_3\cdots\phi_{d-1}\phi_d\alpha^{-1}.
\end{split}\] We may cancel $f$ on the left to conclude that the left most square commutes as well. Thus, we have a morphism
\[X\cong (\alpha\phi_1\beta^{-1},\beta\phi_2,\dots,\phi_{d-1},\phi_d\alpha^{-1}) \to \mc P_1.\] We claim that this morphism is an admissible epimorphism. By Lemma \ref{thm:admiss_morphs_form}, it suffices to show that each of the vertical maps depicted above are surjective. By the commutativity of the diagram,
\[ p_1 =  (p_2\beta\phi_2\phi_3\cdots\phi_k)(\phi_{k+1}\phi_{k+2}\cdots\phi_{d-1}\phi_d\alpha^{-1})\] for each $k=2,3,\dots,d-1$. Since $p_1$ is surjective, this implies $p_2\beta\phi_2\phi_3\cdots\phi_k$ is surjective for each $k=2,3,\dots,d-1$ as claimed. Since $\mc P_1$ is projective, the admissible epimorphism $X \twoheadrightarrow P_1$ implies that $X$ has a direct summand isomorphic to $\mc P_1$.
\end{proof}

We note that KRS was not needed for the proof of Proposition \ref{thm:proj_summands}. The result therefore holds for an arbitrary regular local ring $S$. The following corollary also holds without the completeness of $S$.

\begin{cor}\label{thm:only_indecomp_projs}
Let $S$ be a regular local ring, $f$ a non-zero non-unit in $S$, and $d \ge 2$.
\begin{enumerate}[label = (\roman*)]
    \item \label{thm:only_indecomp_projs:1} The objects $\mc P_1,\mc P_2,\dots,\mc P_d \in\MatFac{S}{d}{f}$ are the only indecomposable projectives (equivalently injectives) up to isomorphism.
    \item \label{thm:only_indecomp_projs:2} A matrix factorization $X \in \MatFac{S}{d}{f}$ is stable if and only if it has no non-zero projective direct summands.
\end{enumerate}

\end{cor}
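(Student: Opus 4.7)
The plan is to derive both parts from Proposition~\ref{thm:proj_summands} together with the observation that any nonzero projective $P \in \MatFac{S}{d}{f}$ of size $n$ is a direct summand of $\bigoplus_{j \in \Z_d}\mc P_j^n$; this follows by splitting the admissible epimorphism $P(P) \twoheadrightarrow P$ from Construction~\ref{construction:enough}. Since Proposition~\ref{thm:proj_iff_inj} identifies projectives with injectives, it suffices to argue the projective case.

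For part \ref{thm:only_indecomp_projs:1}, the indecomposability of each $\mc P_i$ is immediate since $\mc P_i$ has size $1$. For the converse, let $P = (\phi_1^P,\dots,\phi_d^P)$ be an indecomposable projective of size $n$. Writing $P$ as a summand of $\bigoplus_{j \in \Z_d}\mc P_j^n$ and taking cokernels of the $i$th map in each component, $\cok\phi_i^P$ becomes a direct summand of $R^n$, hence is free over the local ring $R$. I claim at least one such $\cok\phi_i^P$ is nonzero: otherwise every $\phi_i^P$ would be a surjection between free $S$-modules of rank $n \geq 1$, hence an isomorphism, and the product $\phi_1^P\phi_2^P\cdots\phi_d^P = f \cdot 1_{F_1^P}$ would be an isomorphism, forcing $f$ to be a unit and contradicting the hypothesis. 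Picking any $i$ with $\cok\phi_i^P \neq 0$, that nonzero free module has $R$ as a summand, so Proposition~\ref{thm:proj_summands} furnishes $\mc P_i$ as a direct summand of $P$; indecomposability of $P$ then forces $P \cong \mc P_i$.

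Part \ref{thm:only_indecomp_projs:2} is then a straightforward translation through Proposition~\ref{thm:proj_summands}. If some $\mc P_i$ is a direct summand of $X$, then $\cok\phi_i^X$ contains $R$ as a summand and $X$ is not stable. Conversely, if $X$ has any nonzero projective summand $Q$, I would rerun the cokernel analysis above on $Q$ directly (indecomposability of $P$ was used in part \ref{thm:only_indecomp_projs:1} only at the very end, to upgrade the $\mc P_i$-summand to an isomorphism); this produces an index $i$ with $\cok\phi_i^Q$ nonzero and free, hence a $\mc P_i$-summand of $Q$ and of $X$, so that $X$ is not stable.

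The crux of the argument is the nonvanishing of at least one $\cok\phi_i^P$, which is precisely where the non-unit hypothesis on $f$ is essential. Notably, the argument avoids KRS in $\MatFac{S}{d}{f}$ altogether by extracting a $\mc P_i$-summand directly via Proposition~\ref{thm:proj_summands} rather than decomposing $P$ (or $Q$) into indecomposables; this matches the remark preceding the statement that the corollary holds without the completeness of $S$.
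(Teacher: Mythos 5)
Your proof is correct and takes essentially the same route as the paper's: both derive part (i) by writing an indecomposable projective $Q$ as a summand of $P(Q)\cong\bigoplus_i\mc P_i^n$, observing that each $\cok Q_k$ is therefore a direct summand of $R^n$ (hence free over the local ring $R$), locating an index where this cokernel is nonzero, and invoking Proposition~\ref{thm:proj_summands} to peel off a $\mc P_j$-summand. One thing you do more carefully than the paper is justify the nonvanishing of at least one $\cok Q_k$: the paper merely asserts this from $Q\neq 0$, whereas you correctly trace it to the non-unit hypothesis on $f$ (all components being isomorphisms would force $f$ to be a unit). Your treatment of part (ii) is also more explicit about why KRS is avoidable: instead of decomposing an arbitrary projective summand $Q$ into indecomposables (which would require completeness), you rerun the cokernel analysis on $Q$ directly to extract a $\mc P_i$-summand of $Q$, and hence of $X$; this cashes in the remark preceding the corollary in the paper.

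One small logical slip in part (ii): the sentence beginning ``Conversely'' does not actually give the converse implication. Both of your sentences establish the same direction, namely that a nonzero projective summand forces $X$ to be non-stable. You never state the other implication, that $X$ non-stable implies $X$ has a nonzero projective summand. That direction is immediate from Proposition~\ref{thm:proj_summands} (if some $\cok\phi_k$ has a free $R$-summand, then $\mc P_k$ is a summand of $X$), so the gap is cosmetic rather than substantive, but the ``Conversely'' label should be moved to cover this missing half.
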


\begin{proof} Let $Q = (Q_1,Q_2,\dots,Q_d) \in\MatFac{S}{d}{f}$ be an indecomposable projective of size $n$. Then, we have a short exact sequence of the form \eqref{equation:enough_proj}, and more specifically, $Q$ is a direct summand of $P(Q) \cong \bigoplus_{i \in \Z_d}\mc P_i^n$. It follows that, for any $k \in \Z_d$, $\cok Q_k$ is either 0 or has a direct summand isomorphic to $R$. Since $Q$ is non-zero matrix factorization, there exists $j \in \Z_d$ such that $\cok Q_j \neq 0$. Thus, $\cok Q_j$ has a direct summand isomorphic to $R$. By Proposition \ref{thm:proj_summands}, this implies that $Q$ has a direct summand isomorphic to $\mc P_j$. Since $Q$ is indecomposable, we have that $Q \cong \mc P_j$. The statement about indecomposable injectives follows immediately because of Lemma \ref{thm:proj_iff_inj}.

The second statement follows by combining \ref{thm:only_indecomp_projs:1} and Proposition \ref{thm:proj_summands}.
\end{proof}

Proposition \ref{thm:proj_summands} and Corollary \ref{thm:only_indecomp_projs} give us a clearer picture of the structure of matrix factorizations and the MCM $R$-modules they encode.

\begin{prop}\label{thm:stable+proj}
Let $X = (\phi_1,\dots,\phi_d) \in \MatFac{S}{d}{f}$. Then,
\begin{equation}\label{equation:stable+proj} X \cong \tilde X \oplus \mc P_1^{s_1}\oplus \mc P_2^{s_2}\oplus\cdots\oplus \mc P_d^{s_d}\end{equation} for some stable matrix factorization $\tilde X = (\tilde\phi_1,\dots,\tilde\phi_d)$ and integers $s_k \ge 0$, $k \in \Z_d$. The integers $s_k$ are uniquely determined and the direct summand $\tilde X$ is unique up to isomorphism. The corresponding MCM $R$-modules are
\[\cok\phi_k \cong \cok\tilde\phi_k \oplus R^{s_k}\] where $\cok\tilde\phi_k$ is a stable MCM $R$-module for all $k \in \Z_d$.
\end{prop}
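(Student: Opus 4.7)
The plan is to combine the Krull-Remak-Schmidt theorem, which holds in $\MatFac{S}{d}{f}$ by Theorem \ref{thm:MF_equiv_MCMgamma} since $S$ is complete, with the classification of indecomposable projectives given by Corollary \ref{thm:only_indecomp_projs}. By KRS, the matrix factorization $X$ decomposes, uniquely up to isomorphism and reordering, into a finite direct sum of indecomposable matrix factorizations. By Corollary \ref{thm:only_indecomp_projs}\ref{thm:only_indecomp_projs:1}, each indecomposable projective summand is isomorphic to one of $\mc P_1,\dots,\mc P_d$. Grouping these projective indecomposable summands by isomorphism class produces non-negative integers $s_1,\dots,s_d$, while the remaining indecomposables assemble into a matrix factorization $\tilde X$ having no projective direct summand. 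Corollary \ref{thm:only_indecomp_projs}\ref{thm:only_indecomp_projs:2} then ensures that $\tilde X$ is stable, giving the decomposition \eqref{equation:stable+proj}.

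The uniqueness of the integers $s_k$ and of $\tilde X$ up to isomorphism is an immediate consequence of the uniqueness clause of KRS: any other decomposition of the form \eqref{equation:stable+proj} can be refined to a direct sum of indecomposables, and the multiplicities of each $\mc P_k$ (respectively, of each non-projective indecomposable summand of $\tilde X$) must match those in the first decomposition.

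To obtain the cokernel statement, I would unpack the $k$-th component of the direct sum. By definition of $\mc P_i$, the $k$-th map in $\mc P_i$ is multiplication by $f$ on $S$ when $i=k$ and the identity on $S$ when $i \ne k$. Consequently the $k$-th component of $\mc P_1^{s_1}\oplus\cdots\oplus \mc P_d^{s_d}$ is isomorphic, as an $S$-homomorphism, to $f\cdot 1_{S^{s_k}}$ on the $\mc P_k^{s_k}$-summand together with identity maps on the remaining summands. Since cokernels distribute over direct sums of homomorphisms and the identity maps contribute zero cokernel, the cokernel of this $k$-th component is $R^{s_k}$. Combining with the $\tilde X$-summand yields $\cok\phi_k \cong \cok\tilde\phi_k \oplus R^{s_k}$, and $\cok\tilde\phi_k$ is stable by the very definition of a stable matrix factorization applied to $\tilde X$.

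I do not anticipate any serious obstacle; the argument is essentially bookkeeping once KRS and Corollary \ref{thm:only_indecomp_projs} are in hand. The only step requiring a small computation is verifying the cokernel identity, but this reduces to inspecting the components of the $\mc P_i$ and using that $\cok(1_S)=0$ and $\cok(f\cdot 1_S)=R$.
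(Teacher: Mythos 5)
Your argument is correct and follows essentially the same route as the paper: invoke KRS (available via Theorem \ref{thm:MF_equiv_MCMgamma}), use Corollary \ref{thm:only_indecomp_projs} to identify the projective indecomposable summands as copies of the $\mc P_i$ and to conclude that the complement $\tilde X$ is stable, and then read off the cokernels componentwise. The only cosmetic difference is that the paper cites Proposition \ref{thm:proj_summands} to obtain the statement $\cok\phi_k \cong \cok\tilde\phi_k \oplus R^{s_k}$, whereas you derive it by the direct (and arguably more transparent) observation that $\cok$ commutes with finite direct sums and that the $k$-th map of $\mc P_i$ has cokernel $R$ when $i=k$ and $0$ otherwise.
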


\begin{proof}
Lemma \ref{thm:only_indecomp_projs} says that the only indecomposable projective matrix factorizations are $\mc P_1,\mc P_2,\dots,\mc P_d$. Since $S$ is complete, KRS implies that we have a decomposition $X \cong \tilde X \oplus \mc P_1^{s_1}\oplus \mc P_2^{s_2}\oplus\cdots\oplus \mc P_d^{s_d}$ for some unique integers $s_k\ge 0$ and some $\tilde X$ which has no projective summands. Finally, using Proposition \ref{thm:proj_summands}, each copy of $\mc P_k$ contributes a free $R$-summand to $\cok\phi_k$.
\end{proof}

\begin{cor}\label{thm:omega_stable+proj}
Let $X=(\phi_1,\dots,\phi_d) \in \MatFac{S}{d}{f}$ of size $n$. Then,
\begin{equation}\label{equation:omega_stable+proj}
    \Omega_{\MatFac{S}{d}{f}}(X) \cong \tilde\Omega \oplus \mc P_1^{m_1}\oplus \cdots\oplus \mc P_d^{m_d}
\end{equation} where $m_k = n- \mu_R(\cok\phi_k)$ and $\tilde\Omega \in \MatFac{S}{d}{f}$ is stable. Furthermore, $\tilde\Omega$ is of size $\sum_{k=1}^d\mu_R(\cok\phi_k) - n$.
\end{cor}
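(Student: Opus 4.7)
The plan is to combine Proposition \ref{thm:cok_Omega_k}, Proposition \ref{thm:proj_summands}, and Proposition \ref{thm:stable+proj} and then do a rank count.

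First I would apply Proposition \ref{thm:stable+proj} directly to $\Omega_{\MatFac{S}{d}{f}}(X)$, which gives a decomposition of the form
\[
\Omega_{\MatFac{S}{d}{f}}(X) \cong \tilde\Omega \oplus \mc P_1^{s_1} \oplus \cdots \oplus \mc P_d^{s_d}
\]
for a stable matrix factorization $\tilde\Omega$ and uniquely determined integers $s_k \ge 0$. The content of the Corollary is therefore just the identification of each $s_k$ with $m_k = n - \mu_R(\cok\phi_k)$ and the calculation of the size of $\tilde\Omega$.

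Next I would determine the $s_k$ by extracting free $R$-summands from the cokernels of the components of $\Omega_{\MatFac{S}{d}{f}}(X) = (\Omega_1,\dots,\Omega_d)$. By Proposition \ref{thm:proj_summands}, $s_k$ is precisely the multiplicity of $R$ as a direct summand of $\cok\Omega_k$. On the other hand, Proposition \ref{thm:cok_Omega_k} gives
\[
\cok\Omega_k \cong \syz_R^1(\cok\phi_k) \oplus R^{m_k},
\]
with $m_k = n - \mu_R(\cok\phi_k)$. Since $\syz_R^1$ denotes the reduced first syzygy, it has no free summand, so the free-summand count of $\cok\Omega_k$ is exactly $m_k$. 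Thus $s_k = m_k$, which is the key identification.

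Finally I would compute the size of $\tilde\Omega$ by a simple rank count. The matrix factorization $\Omega_{\MatFac{S}{d}{f}}(X)$ has size $(d-1)n$ by its construction (each $\widehat F_k$ has rank $(d-1)n$). The projective summands account for total size $\sum_{k=1}^d m_k = dn - \sum_{k=1}^d \mu_R(\cok\phi_k)$, so
\[
\text{size}(\tilde\Omega) = (d-1)n - \Bigl(dn - \sum_{k=1}^d \mu_R(\cok\phi_k)\Bigr) = \sum_{k=1}^d \mu_R(\cok\phi_k) - n,
\]
as claimed. There is no real obstacle here; the only subtle point is remembering that $\syz_R^1$ is by convention the \emph{reduced} syzygy, so that the $R^{m_k}$ term in Proposition \ref{thm:cok_Omega_k} accounts for \emph{all} of the free summands of $\cok\Omega_k$.
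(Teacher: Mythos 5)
Your proposal is correct and follows essentially the same route as the paper: apply Proposition \ref{thm:stable+proj} to $\Omega_{\MatFac{S}{d}{f}}(X)$, use Proposition \ref{thm:cok_Omega_k} to identify the multiplicities of the projective summands via the free summands of $\cok\Omega_k$ (the paper absorbs this step implicitly through the cokernel formula in Proposition \ref{thm:stable+proj}, whereas you cite Proposition \ref{thm:proj_summands} directly), and finish with the same rank count to determine the size of $\tilde\Omega$.
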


\begin{proof}
The isomorphism \eqref{equation:omega_stable+proj} follows by combining Propositions \ref{thm:stable+proj} and \ref{thm:cok_Omega_k}. Let $\ell \ge 0$ be the size of $\tilde\Omega$. Since $\Omega_{\MatFac{S}{d}{f}}(X)$ is of size $(d-1)n$, we have that
\[(d-1)n = \ell + \sum_{k=1}^d m_k = \ell + dn - \sum_{k=1}^d\mu_R(\cok\phi_k).\] Thus, $\ell = \sum_{k=1}^d\mu_R(\cok\phi_k) - n$.
\end{proof}

We continue with two more important properties of the stable part of the syzygy of a matrix factorization.

\begin{lem}\label{thm:Herzog_part1}
Let $X \in \MatFac{S}{d}{f}$ and suppose 
\[\begin{tikzcd}
K \rar[tail] &P \rar[two heads] &X
\end{tikzcd}\] is a short exact sequence with $P$ projective. Then, $\tilde\Omega$ is isomorphic to a direct summand of $K$ where $\Omega_{\MatFac{S}{d}{f}}(X)\cong\tilde\Omega\oplus Q$ is a decomposition of the form \eqref{equation:omega_stable+proj}.
\end{lem}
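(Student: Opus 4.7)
The plan is to reduce the lemma to a direct application of Schanuel's Lemma (Lemma \ref{thm:Schanuel}) together with the Krull-Remak-Schmidt theorem, which is available since $S$ is assumed complete in this section. First I would pair the given short exact sequence $K \rightarrowtail P \twoheadrightarrow X$ with the canonical sequence $\Omega_{\MatFac{S}{d}{f}}(X) \rightarrowtail P(X) \twoheadrightarrow X$ from \eqref{equation:enough_proj} and apply Schanuel to conclude
\[
K \oplus P(X) \;\cong\; \Omega_{\MatFac{S}{d}{f}}(X) \oplus P.
\]

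Next I would invoke Proposition \ref{thm:stable+proj} to decompose $K$ into its stable and projective parts, writing $K \cong \tilde K \oplus Q_K$ with $\tilde K$ stable and $Q_K$ a direct sum of copies of $\mc P_1,\ldots,\mc P_d$. Substituting this, and the given decomposition $\Omega_{\MatFac{S}{d}{f}}(X) \cong \tilde\Omega \oplus Q$, into the Schanuel isomorphism yields
\[
\tilde K \oplus (Q_K \oplus P(X)) \;\cong\; \tilde \Omega \oplus (Q \oplus P).
\]
Here both parenthesized terms are projective (each is a direct sum of $\mc P_i$'s), while $\tilde K$ and $\tilde\Omega$ are stable.

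By Corollary \ref{thm:only_indecomp_projs}\ref{thm:only_indecomp_projs:1}, the indecomposable projectives are exactly $\mc P_1,\ldots,\mc P_d$, and by Corollary \ref{thm:only_indecomp_projs}\ref{thm:only_indecomp_projs:2} none of the indecomposable summands of $\tilde K$ or $\tilde\Omega$ is of the form $\mc P_i$. Hence KRS applied to the displayed isomorphism forces the stable and projective parts to match on each side, giving $\tilde K \cong \tilde \Omega$ and $Q_K \oplus P(X) \cong Q \oplus P$. The first of these isomorphisms says exactly that $\tilde \Omega$ is a direct summand of $K$.

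I do not expect any genuine obstacle: the only delicate point is the bookkeeping that separates stable from projective summands under KRS, which is handled cleanly by Corollary \ref{thm:only_indecomp_projs} once one has Proposition \ref{thm:stable+proj} in hand. In particular, the argument shows slightly more than the statement claims, namely that $\tilde\Omega$ is in fact isomorphic to the stable part of $K$.
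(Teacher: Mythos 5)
Your proof is correct and is exactly the paper's approach: the paper's proof of this lemma simply states that it "follows directly from Lemma \ref{thm:Schanuel} and Proposition \ref{thm:stable+proj}," and your write-up supplies precisely the Schanuel-plus-KRS bookkeeping that this terse statement leaves implicit. Your closing observation that $\tilde\Omega$ is isomorphic to the stable part of $K$ is also accurate and is implicit in the paper's reliance on the uniqueness clause of Proposition \ref{thm:stable+proj}.
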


\begin{proof}
This follows directly from Lemma \ref{thm:Schanuel} and Proposition \ref{thm:stable+proj}.
\end{proof}

\begin{prop}\label{thm:Herzog_part2}
Let $X \in \MatFac{S}{d}{f}$ be a indecomposable non-projective matrix factorization and let $\Omega_{\MatFac{S}{d}{f}}(X) 
\cong\tilde\Omega \oplus P$ be a decomposition of the form \eqref{equation:omega_stable+proj}. Then, $\tilde\Omega$ is indecomposable.
\end{prop}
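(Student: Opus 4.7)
I will argue by contradiction. Suppose $\tilde\Omega \cong Y \oplus Y'$ with both $Y$ and $Y'$ non-zero. Since $\tilde\Omega$ is the stable part of $\Omega_{\MatFac{S}{d}{f}}(X)$ and therefore has no projective summands, any projective summand of $Y$ or $Y'$ would give a projective summand of $\tilde\Omega$; hence $Y$ and $Y'$ are themselves stable. Being non-zero and stable, they are in particular non-projective.

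Next I will apply $\Omega_{\MatFac{S}{d}{f}}^-$ to the decomposition $\Omega_{\MatFac{S}{d}{f}}(X) \cong \tilde\Omega \oplus P$ from \eqref{equation:omega_stable+proj}. By additivity of $\Omega_{\MatFac{S}{d}{f}}^-$ (Lemma \ref{thm:omega_additive}) together with Proposition \ref{thm:omega_iso_omega-1} \ref{thm:omega_iso_omega-1:part2}, this produces an isomorphism
\[\Omega_{\MatFac{S}{d}{f}}^-(Y) \oplus \Omega_{\MatFac{S}{d}{f}}^-(Y') \oplus \Omega_{\MatFac{S}{d}{f}}^-(P) \cong X \oplus \mc P^{(d-2)n},\]
where $n$ is the size of $X$. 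By Lemma \ref{thm:omega_additive}(3), since $Y$ and $Y'$ are non-projective, neither $\Omega_{\MatFac{S}{d}{f}}^-(Y)$ nor $\Omega_{\MatFac{S}{d}{f}}^-(Y')$ is projective, and so Proposition \ref{thm:stable+proj} shows each has a non-zero stable part, contributing at least one indecomposable non-projective summand.

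The last step is to invoke the Krull-Remak-Schmidt Theorem in $\MatFac{S}{d}{f}$, which is available since $S$ is complete (Theorem \ref{thm:MF_equiv_MCMgamma}). The only indecomposable projectives are $\mc P_1,\dots,\mc P_d$ by Corollary \ref{thm:only_indecomp_projs}, so matching the multisets of indecomposable non-projective summands on the two sides of the displayed isomorphism gives a contradiction: the right-hand side has exactly one such summand (namely $X$), while the left has at least two.

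The main obstacle I expect is the verification that $\Omega_{\MatFac{S}{d}{f}}^-(Y)$ and $\Omega_{\MatFac{S}{d}{f}}^-(Y')$ are genuinely non-projective, so that each truly introduces a fresh stable indecomposable into the decomposition. Once Lemma \ref{thm:omega_additive}(3) is in hand this is immediate, and the rest of the argument is routine KRS bookkeeping.
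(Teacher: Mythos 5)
Your proof follows essentially the same approach as the paper's: apply the cosyzygy functor to the decomposition $\Omega_{\MatFac{S}{d}{f}}(X) \cong \tilde\Omega \oplus P$, invoke Proposition \ref{thm:omega_iso_omega-1}\ref{thm:omega_iso_omega-1:part2} together with Lemma \ref{thm:omega_additive} to identify the result with $X \oplus \mc P^{(d-2)n}$, and then use KRS to match the stable parts and contradict the indecomposability of $X$. Your final bookkeeping (counting indecomposable non-projective summands on each side) is a cosmetic variant of the paper's, which explicitly writes $\Omega_{\MatFac{S}{d}{f}}^{-}(Y_i) \cong U_i \oplus P_i$, concludes $U_1 \oplus U_2 \cong X$, and forces one $U_i$ to vanish, whence $Y_i$ is projective.

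One small but necessary step is missing: you must also verify that $\tilde\Omega \neq 0$, since ``indecomposable'' is taken to mean non-zero. Your contradiction argument only rules out a splitting into two non-zero pieces and is silent on the possibility $\tilde\Omega = 0$. The paper handles this first: if $\tilde\Omega = 0$ then $\Omega_{\MatFac{S}{d}{f}}(X)$ would be projective, and Lemma \ref{thm:omega_additive} would then force $X$ itself to be projective, contradicting the hypothesis.
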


\begin{proof}
First, note that, $\tilde\Omega \neq 0$. Indeed, if $\Omega_{\MatFac{S}{d}{f}}(X)$ was projective, then Proposition \ref{thm:omega_additive} would imply that $X$ is projective as well, which is not the case. So, assume $\tilde\Omega = Y_1 \oplus Y_2$ for some non-zero $Y_1,Y_2 \in \MatFac{S}{d}{f}$. Since $\tilde\Omega$ is stable, the direct summands $Y_1$ and $Y_2$ are also stable. Then,
\[\begin{split}
    \Omega_{\MatFac{S}{d}{f}}^{-}(\Omega_{\MatFac{S}{d}{f}}(X)) &\cong  \Omega_{\MatFac{S}{d}{f}}^{-}(\tilde\Omega) \oplus  \Omega_{\MatFac{S}{d}{f}}^{-}(P)\\
    &\cong  \Omega_{\MatFac{S}{d}{f}}^{-}(Y_1) \oplus  \Omega_{\MatFac{S}{d}{f}}^{-}(Y_2) \oplus  \Omega_{\MatFac{S}{d}{f}}^{-}(P).
\end{split}\] For $i=1,2$, decompose $ \Omega_{\MatFac{S}{d}{f}}^{-}(Y_i) \cong U_i \oplus P_i$, for some stable $U_i$ and projective $P_i$. Now, applying Proposition \ref{thm:omega_iso_omega-1}, we have that
\[ X \oplus \mc P^{(d-2)n} \cong U_1 \oplus U_2 \oplus P_1\oplus P_2 \oplus  \Omega_{\MatFac{S}{d}{f}}^{-}(P) \] where $n$ is the size of $X$. Since both sides of this isomorphism are decomposed into the form \eqref{equation:stable+proj}, we have that $U_1\oplus U_2 \cong X$. But $X$ is indecomposable, so one of $U_1$ or $U_2$ must be zero. Re-indexing if necessary, we may assume $U_1=0$. This implies that $\Omega_{\MatFac{S}{d}{f}}^{-}(Y_1)$ is projective and therefore $Y_1$ is projective. However, this is a contradiction since $Y_1$ is a non-zero stable matrix factorization. Hence, $\tilde\Omega$ is indecomposable.
\end{proof}

So far, we have refrained from assuming that $X$ is a reduced matrix factorization. On the other hand, if we do assume that $X \in \MatFac{S}{d}{f}$ is reduced, we obtain more concise versions of \ref{thm:cok_Omega_k}, \ref{thm:stable+proj}, \ref{thm:omega_stable+proj}, and \ref{thm:Herzog_part2}.

\begin{cor}\label{thm:prop_of_reduced_MFs}
Let $X \in \MatFac{S}{d}{f}$ be reduced. Then, the following hold.

\begin{enumerate}[label = (\roman*)]
    \item $\cok\Omega_k \cong \syz_R^1(\cok\phi_k)$ for each $k \in \Z_d$.
    \item Both $X$ and $\Omega_{\MatFac{S}{d}{f}}(X)$ are stable.
    \item If $X$ is indecomposable, then $\Omega_{\MatFac{S}{d}{f}}(X)$ is indecomposable.
\end{enumerate}
\end{cor}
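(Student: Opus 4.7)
The plan is to deduce all three statements from the structural results already proved in this section, with the key input being that reducedness forces $\mu_R(\cok\phi_k) = n$ for every $k$; everything else is bookkeeping on top of Propositions \ref{thm:cok_Omega_k}, \ref{thm:proj_summands}, and \ref{thm:Herzog_part2} together with Corollaries \ref{thm:only_indecomp_projs} and \ref{thm:omega_stable+proj}.

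Part (i) follows directly from Proposition \ref{thm:cok_Omega_k}, which gives
\[\cok\Omega_k \cong \syz_R^1(\cok\phi_k) \oplus R^{m_k}, \qquad m_k = n - \mu_R(\cok\phi_k),\]
once I verify that $m_k = 0$. Since $\phi_k$ is minimal, $\Ima\phi_k \subseteq \mf n F_k$, so
\[\cok\phi_k \otimes_R R/\mf n R \;=\; F_k/(\mf n F_k + \Ima\phi_k) \;=\; F_k/\mf n F_k,\]
which has dimension $n$ over $R/\mf n R$. Hence $\mu_R(\cok\phi_k) = n$ and $m_k = 0$.

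For part (ii), I would first show $X$ itself is stable. If some $\cok\phi_k$ had a free $R$-summand, Proposition \ref{thm:proj_summands} would produce a decomposition $X \cong X' \oplus \mc P_k$; but in the corresponding change of basis, each component $\phi_j$ with $j \neq k$ would split off a copy of $1_S$, whose image is $S \not\subseteq \mf n S$, so $\phi_j$ would fail to be minimal. Since $d \ge 2$ such a $j$ exists, contradicting reducedness. That $\Omega_{\MatFac{S}{d}{f}}(X)$ is stable then follows from Corollary \ref{thm:omega_stable+proj}: with $m_k = 0$ for all $k$ by (i), the decomposition \eqref{equation:omega_stable+proj} collapses to $\Omega_{\MatFac{S}{d}{f}}(X) \cong \tilde\Omega$, which is stable by construction.

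For part (iii), I would combine (ii) with Proposition \ref{thm:Herzog_part2}. If $X$ is indecomposable and reduced, then by (ii) and Corollary \ref{thm:only_indecomp_projs} it is indecomposable and non-projective. Proposition \ref{thm:Herzog_part2} then says the stable part $\tilde\Omega$ of $\Omega_{\MatFac{S}{d}{f}}(X)$ is indecomposable, and since (ii) also produced the isomorphism $\Omega_{\MatFac{S}{d}{f}}(X) \cong \tilde\Omega$, the syzygy itself is indecomposable. I do not anticipate any substantive obstacle; the only step that requires a moment's care is verifying $\mu_R(\cok\phi_k) = n$ in part (i), which is the lever that makes the rest of the corollary fall out immediately.
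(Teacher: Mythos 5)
Your proof is correct, but it takes a somewhat more elementary route than the paper for parts (i) and (ii). The paper observes that reducedness of $X$ makes $(\phi_k,\phi_{k+1}\cdots\phi_{k-1})$ a reduced 2-factor matrix factorization and then invokes Proposition~\ref{thm:Eisenbud_d=2}\ref{thm:Eisenbud_d=2:reduced} (Eisenbud's ``reduced iff stable'' correspondence with $d=2$) in one stroke: this simultaneously gives that $\cok\phi_k$ is stable and that $\cok(\phi_{k+1}\cdots\phi_{k-1})=\syz_R^1(\cok\phi_k)$ is its \emph{reduced} syzygy, after which Proposition~\ref{thm:cok_Omega_k} finishes (i) and stability of $\Omega(X)$ falls out because its cokernels are exactly those reduced syzygies. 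You instead compute $\mu_R(\cok\phi_k)=n$ directly from $\Ima\phi_k\subseteq\mathfrak{n}F_k$ (which is precisely the content hidden inside Eisenbud's statement) to force $m_k=0$ in Proposition~\ref{thm:cok_Omega_k}/Corollary~\ref{thm:omega_stable+proj}, and you prove stability of $X$ by a separate argument that a $\mc P_i$-summand would force some $\phi_j$ with $j\neq i$ to split off an identity block and hence fail to be minimal --- this is essentially anticipating Lemma~\ref{thm:reduced_pseudoproj_stable}\ref{thm:reduced_pseudoproj_stable:1} from the later section. Both approaches are sound; yours is more self-contained and computational, while the paper's is shorter by leaning on the established $d=2$ theory. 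Your argument for (iii) coincides with the paper's: combine (ii), the fact that a reduced indecomposable cannot be projective (Corollary~\ref{thm:only_indecomp_projs}), and Proposition~\ref{thm:Herzog_part2}.
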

\begin{proof}
By Proposition \ref{thm:Eisenbud_d=2}\ref{thm:Eisenbud_d=2:reduced}, there is a one-to-one correspondence between reduced matrix factorizations in $\MatFac{S}{2}{f}$ and stable MCM $R$-modules (Lemma \ref{thm:Eisenbud_d=2}). If $X=(\phi_1,\phi_2,\dots,\phi_d) \in \MatFac{S}{d}{f}$ is reduced, then $(\phi_k,\phi_{k+1}\phi_{k+2}\cdots\phi_{k-1})$ is a reduced matrix factorization in $\MatFac{S}{2}{f}$ for each $k \in \Z_d$. Hence, $\cok\phi_k$ is a stable MCM $R$-module for each $k \in \Z_d$ and $\cok(\phi_{k+1}\phi_{k+2}\cdots\phi_{k-1})$ is its reduced first syzygy. Since, by Proposition \ref{thm:cok_Omega_k}, $\cok\Omega_k \cong \cok(\phi_{k+1}\phi_{k+2}\cdots\phi_{k-1})$, the first statement follows. The second statement follows from Proposition \ref{thm:proj_summands} and the third follows from the second and Proposition \ref{thm:Herzog_part2}.
\end{proof}


\section{Examples}\label{section:examples}

Let $S$ be a regular local ring with maximal ideal $\mf n$ and let $f$ be a non-zero non-unit in $S$. An important distinction between the categories $\MatFac{S}{2}{f}$ and $\MatFac{S}{d}{f}$, for $d >2$, is the existence of stable non-reduced matrix factorizations. We illustrate this with our first example.

\begin{example}\label{example:f1_fd}
Let $d>2$ and assume $f \in S$ can be written as $f=f_1f_2\cdots f_d$ for some $f_i \in \mf n$. Then, $(f_1,f_2,\dots,f_d)$ is an indecomposable reduced matrix factorization of size 1 in $\MatFac{S}{d}{f}$. Since $d>2$, the syzygy of $(f_1,f_2,\dots,f_d)$ in $\MatFac{S}{d}{f}$ is not reduced. However, Corollary \ref{thm:prop_of_reduced_MFs} implies that $\Omega(f_1,f_2,\dots,f_d)$ is indecomposable and stable.

It is tempting to think that the presence of unit entries must make $\Omega(f_1,\dots,f_d)$ redundant in some way. Actually, we can not replace $\Omega(f_1,\dots,f_d)$ with a reduced matrix factorization in the following sense.

Set $\Omega(f_1,\dots,f_d) = (\Omega_1,\Omega_2,\dots,\Omega_d)$ and let $\widehat f_k = f_1\cdots f_{k-1}f_{k+1}\cdots f_d$ for each $k \in \Z_d$. Then, by Proposition \ref{thm:cok_Omega_k}, we have that $\cok\Omega_k \cong R/(\widehat f_k)$. Suppose $X = (\phi_1,\dots,\phi_d)$ is a reduced matrix factorization of $f$ such that $\cok\phi_k \cong R/(\widehat f_k)$ for all $k \in \Z_d$. Since $\phi_k$ is minimal, the short exact sequence
\[\begin{tikzcd}
0 \rar &F_{k+1} \rar{\phi_k} &F_k \rar &R/(\widehat f_k) \rar &0
\end{tikzcd}\] is a minimal free resolution of $R/(\widehat f_k)$ over $S$. By tensoring with $R$, it follows that $\rank_S F_k = \mu_R(R/(\widehat f_k)) = 1$. That is, $X$ is a matrix factorization of size $1$. Thus, the homomorphism $\phi_k$ is given by multiplication by $\widehat f_k$ up to a unit, say $\phi_k = v_k\widehat f_k$ for $v_k \in S$ invertible. 
Since $X$ is a matrix factorization of $f$, we have that
\[f = \phi_1\phi_2\cdots\phi_d = v\widehat f_1\widehat f_2\cdots \widehat f_d = v f^{d-1}\] where $v = v_1v_2\cdots v_d$. Since $d> 2$, this is a contradiction. We conclude that no reduced matrix factorization has the same cokernels as $\Omega(f_1,\dots,f_d)$.
\end{example}




With Example \ref{example:f1_fd} in mind, we continue to focus on stable matrix factorizations. However, we introduce an additional assumption (which implies stability) in order to avoid particularly trivial factorizations.

\begin{defi}
Let $S$ be a regular local ring, $f$ a non-zero non-unit in $S$, and $d \ge 2$. A non-zero matrix factorization $X=(\phi_1,\phi_2,\dots,\phi_d) \in \MatFac{S}{d}{f}$ is \textit{pseudoprojective} if $\cok\phi_k = 0$ for some $k \in \Z_d$.
\end{defi}

We aim to avoid matrix factorizations with pseudoprojective summands. Notice that any indecomposable projective (equivalently injective) in $\MatFac{S}{d}{f}$ is pseudoprojective (See \ref{thm:only_indecomp_projs}). 

An indecomposable matrix factorization in $\MatFac{S}{2}{f}$ is pseudoprojective if and only if it is projective; the only possibilities are $(1,f)$ and $(f,1)$. Consequently, $X \in \MatFac{S}{2}{f}$ is reduced if and only if it is stable if and only if it has no pseudoprojective summands (compare with \ref{thm:Eisenbud_d=2} \ref{thm:Eisenbud_d=2:reduced}). However, for $d > 2$, these properties are not equivalent. The following lemma and examples give the precise relationship.

\begin{lem}\label{thm:reduced_pseudoproj_stable}
Let $X \in \MatFac{S}{d}{f}$ with $d\ge 2$.
\begin{enumerate}[label = (\roman*)]
    \item \label{thm:reduced_pseudoproj_stable:1} If $X$ is reduced, then $X$ has no pseudoprojective summands.
    \item \label{thm:reduced_pseudoproj_stable:2} If $X$ has no pseudoprojective summands, then $X$ is stable.
\end{enumerate}
\end{lem}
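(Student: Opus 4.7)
The plan is to handle the two implications separately, both as short direct arguments, with part (ii) being a quick application of Proposition \ref{thm:proj_summands} and part (i) being a Nakayama argument on direct summands.

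For part \ref{thm:reduced_pseudoproj_stable:1}, I will argue by contradiction. Suppose $X = (\phi_1,\ldots,\phi_d)$ is reduced and admits a non-zero pseudoprojective summand $Y = (\psi_1,\ldots,\psi_d)$ with underlying free $S$-modules $G_1,\ldots,G_d$, so that $\cok\psi_k = 0$ for some $k \in \Z_d$. Writing $X \cong Y \oplus Y'$, the map $\phi_k$ splits as $\psi_k \oplus \psi_k'$, and since $\phi_k$ has image contained in $\mf n F_k$, the component $\psi_k$ has image contained in $\mf n G_k$. The vanishing of $\cok\psi_k$ means $\psi_k$ is surjective, so $G_k = \mf n G_k$, and Nakayama's lemma forces $G_k = 0$. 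By Lemma \ref{thm:n_equals_m}, all of the free $S$-modules $G_1,\ldots,G_d$ must have the same rank as $G_k$, hence all vanish, so $Y = 0$. This contradicts the fact that a pseudoprojective matrix factorization is by definition non-zero.

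For part \ref{thm:reduced_pseudoproj_stable:2}, I again argue the contrapositive: if $X = (\phi_1,\ldots,\phi_d)$ is not stable, I will exhibit a pseudoprojective summand. By definition, some $\cok\phi_k$ has a free $R$-summand. Proposition \ref{thm:proj_summands} then yields a direct summand of $X$ isomorphic to $\mc P_k$. It only remains to observe that each $\mc P_k$ is itself pseudoprojective: directly from its definition, $\mc P_k$ is non-zero and its $j$th component is the identity on $S$ for every $j \neq k$, so its $j$th cokernel is zero. Hence $X$ has a pseudoprojective direct summand, completing the contrapositive.

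I do not expect any real obstacle here; both directions are short once Lemma \ref{thm:n_equals_m} and Proposition \ref{thm:proj_summands} are in hand. The only subtlety to flag is the convention that pseudoprojective matrix factorizations are non-zero, which is precisely what drives the contradiction in part \ref{thm:reduced_pseudoproj_stable:1} and what makes the summands $\mc P_k$ qualify as pseudoprojective in part \ref{thm:reduced_pseudoproj_stable:2}.
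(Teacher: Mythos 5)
Your proof is correct and follows essentially the same route as the paper: part (i) contradicts minimality of the component of $\phi_k$ landing on the pseudoprojective summand (you phrase it via Nakayama and Lemma \ref{thm:n_equals_m}; the paper phrases it as an isomorphism between non-zero free modules not being minimal, which is the same fact), and part (ii) reduces directly to Proposition \ref{thm:proj_summands}, while the paper routes through Corollary \ref{thm:only_indecomp_projs}\ref{thm:only_indecomp_projs:2}, which itself rests on that same proposition.
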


\begin{proof}
In order to prove \ref{thm:reduced_pseudoproj_stable:1}, suppose $X \cong X' \oplus X''$ for some $X'=(\phi_1',\dots,\phi_d')$ and $X''=(\phi_1'',\dots,\phi_d'')$ with $X'$ pseudoprojective. Then, $\cok\phi_k' = 0$ for some $k \in \Z_d$. Equivalently, $\phi_k'$ is an isomorphism and therefore, $\phi_k \cong \phi_k' \oplus \phi_k''$ is not minimal.

Corollary \ref{thm:only_indecomp_projs}\ref{thm:only_indecomp_projs:2} showed that $X$ is stable if and only if it has no non-zero projective summands. Since indecomposable projective matrix factorizations are pseudoprojective, \ref{thm:reduced_pseudoproj_stable:2} follows.
\end{proof}

Examples \ref{example:pseudoprojs} and \ref{example:D_infty} below show that the converses of \ref{thm:reduced_pseudoproj_stable:2} and \ref{thm:reduced_pseudoproj_stable:1} respectively fail in general for $d > 2$.

\begin{example}\label{example:pseudoprojs}
Let $(\phi:G \to F,\psi:F \to G) \in \MatFac{S}{2}{f}$ be a reduced matrix factorization.
\begin{enumerate}[label = (\roman*)]
    \item $(\phi,\psi,1_F) \in\MatFac{S}{3}{f}$ is stable but is itself pseudoprojective.
    \item $\left(\begin{pmatrix}
    \phi &\\
    & 1_F
    \end{pmatrix},\begin{pmatrix}
    \psi &\\
    & 1_F
    \end{pmatrix}, \begin{pmatrix}
    1_F &\\
    & \phi
    \end{pmatrix},\begin{pmatrix}
    1_F &\\
    & \psi
    \end{pmatrix}\right) \in\MatFac{S}{4}{f}$ is stable but it is a direct sum of pseudoprojectives.
\end{enumerate}
\end{example}

\begin{example}\label{example:D_infty}
Let $S=\mathbf{k} \llbracket x,y\rrbracket$, with $\bf k$ an uncountable algebraically closed field of characteristic different from 2. Let $f = x^2y \in S$ and, as usual, set $R=S/(f)$. Consider the following matrix factorization of $f$ with 3 factors:
\[X = (\phi_1,\phi_2,\phi_3) = \left(
\begin{pmatrix}
x &y\\
0 &-x
\end{pmatrix},
\begin{pmatrix}
0 &y\\
x^2 &-x
\end{pmatrix},
\begin{pmatrix}
1 &0\\
x &y
\end{pmatrix}\right).\] Notice that $X$ is not reduced. We claim that $X$ is indecomposable. To see this, first note that the $R$-module $\cok\begin{pmatrix}
x &y\\
0 &-x
\end{pmatrix}$ is an indecomposable MCM over the $D_\infty$ hypersurface singularity $R= \mathbf{k}\llbracket x ,y \rrbracket/(x^2y)$ \cite[Proposition 4.2]{buchweitz_cohen-macaulay_1987}. If $X \cong Y \oplus Y'$ for some non-zero matrix factorizations $Y=(u,v,w),Y'=(u',v',w') \in \MatFac{S}{3}{f}$, both necessarily of size 1, then 
\[\cok\begin{pmatrix}
x &y\\
0 &-x
\end{pmatrix} \cong \cok(u) \oplus \cok(u').\] This implies that one of $\cok(u)$ or $\cok(u')$ must be zero. Rearranging the summands if necessary, we may assume $\cok(u') = 0$. Equivalently, we have that $u'$ is multiplication by a unit in $S$. This would imply a matrix equivalence
\[\begin{pmatrix}
x &y\\
0 &-x
\end{pmatrix} \sim \begin{pmatrix}
u &0\\
0 &u'
\end{pmatrix}\] which is not possible since the matrix on the right has a unit entry. Therefore, no such decomposition is possible. Since $X$ is indecomposable and the matrices $\phi_1,\phi_2,$ and $\phi_3$ are non-invertible, it follows that $X$ has no pseudoprojective summands.

Set $\mu_i = \mu_R(\cok\phi_i)$, $i\in \Z_3$. Then, $\mu_1 = 2, \mu_2=2,$ and $\mu_3 = 1$. Corollary \ref{thm:omega_stable+proj} says that we should expect a direct sum decomposition of $\Omega_{\MatFac{S}{3}{f}}(X)$ into a stable matrix factorization of size $3$ and a projective isomorphic to $\mc P_3 = (1,1,x^2y)$. Indeed, using invertible row and column operations, one can construct an isomorphism
\[\begin{split}\Omega_{\MatFac{S}{3}{f}}(X) &=
\left(\scalebox{.8}{\(\begin{pmatrix}
0 &-y &-xy &-y^2\\
-x^2 &x &0 &xy\\
1 &0 &0 &0\\
0 &1 &0 &0
\end{pmatrix},
\begin{pmatrix}
-1 &0 &-x &-y\\
-x &-y &-x^2 &0\\
1 &0 &0 &0\\
0 &1 &0 &0
\end{pmatrix},
\begin{pmatrix}
-x &-y &-x^2y &0\\
0 &x &x^3 &-x^2\\
1 &0 &0 &0\\
0 &1 &0 &0
\end{pmatrix}\)}\right)\\
&\cong 
\left(\scalebox{.8}{\(\begin{pmatrix}
1 &0 &0 &0\\
0 &x &0 &xy\\
0 &-y & -xy &-y^2\\
0 &1 &0 &0
\end{pmatrix},
\begin{pmatrix}
1 &0 &0 &0\\
0 &-y &0 &xy\\
0 &0 & -x &-y\\
0 &1 &0 &0
\end{pmatrix},
\begin{pmatrix}
x^2y &0 &0 &0\\
0 &x &0 &-x^2\\
0 &0 &1 &0\\
0 &1 &0 &0
\end{pmatrix}\)}\right).\end{split}\]
 Proposition \ref{thm:Herzog_part2} implies that the stable part of $\Omega_{\MatFac{S}{3}{f}}(X)$,
\[\tilde\Omega = \left(\begin{pmatrix}
x &0 &xy\\
-y &-xy &-y^2\\
1 &0 &0
\end{pmatrix},\begin{pmatrix}
-y &0 &xy\\
0 &-x &-y\\
1 &0 &0
\end{pmatrix},\begin{pmatrix}
x &0 &-x^2\\
0 &1 &0\\
1 &0 &0
\end{pmatrix}\right),\] is also an indecomposable matrix factorization of $f=x^2y$. Clearly, none of the matrices that make up $\tilde\Omega$ are invertible and therefore $\tilde\Omega$ has no pseudoprojective summands.
\end{example}

One benefit of having no pseudoprojective summands is the following.

\begin{lem}\label{thm:pseudoproj_indecomp}
Assume $X = (\phi_1,\phi_2,\dots,\phi_d) \in \MatFac{S}{d}{f}$ has no pseudoprojective summands. If $\cok\phi_j$ is an indecomposable MCM $R$-module for some $j \in \Z_d$, then $X$ is an indecomposable matrix factorization.
\end{lem}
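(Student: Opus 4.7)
The plan is straightforward: assume $X \cong Y \oplus Y'$ for some $Y = (\psi_1,\dots,\psi_d), Y' = (\psi_1',\dots,\psi_d') \in \MatFac{S}{d}{f}$, and show that one of $Y, Y'$ must be zero. The strategy is to pass from the supposed decomposition of $X$ to a decomposition of the MCM module $\cok\phi_j$, exploit its indecomposability to force one summand's $j$th cokernel to vanish, and then use the pseudoprojective hypothesis to conclude that that summand itself is zero.

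Concretely, from the definition of direct sums in $\MatFac{S}{d}{f}$, we have $\phi_k = \psi_k \oplus \psi_k'$ for each $k \in \Z_d$, and in particular $\cok\phi_j \cong \cok\psi_j \oplus \cok\psi_j'$ as $R$-modules. Since $\cok\phi_j$ is indecomposable by hypothesis, one of these summands must be zero; re-labeling if necessary, we may assume $\cok\psi_j' = 0$. There are now two cases. If $Y' = 0$, then $X \cong Y$ and we are done. Otherwise $Y'$ is a non-zero matrix factorization in $\MatFac{S}{d}{f}$ with $\cok\psi_j' = 0$, which is exactly the definition of $Y'$ being pseudoprojective. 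But then $Y'$ is a pseudoprojective direct summand of $X$, contradicting the hypothesis on $X$. Thus $Y' = 0$, proving indecomposability.

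The argument is essentially a single tidy deduction, so there is no serious obstacle; the only thing to check is the compatibility of $\cok(-)$ with direct sums in the $j$th slot of a matrix factorization, which is immediate from the definition of $\oplus$ on $\MatFac{S}{d}{f}$. No appeal to the Krull--Remak--Schmidt theorem, to $\Omega_{\MatFac{S}{d}{f}}$, or to the pseudoprojective/stable/reduced trichotomy beyond Lemma \ref{thm:reduced_pseudoproj_stable} (and not even that) is needed.
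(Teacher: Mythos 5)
Your proof is correct and follows essentially the same route as the paper's: decompose $X$, note that $\cok\phi_j$ then decomposes, use indecomposability of $\cok\phi_j$ to force one of the $j$th cokernels to vanish, and observe that this makes the corresponding summand pseudoprojective unless it is zero. The only cosmetic difference is that you allow $Y'$ to be possibly zero and do a case split, while the paper assumes from the outset that both summands are non-zero and derives the contradiction directly; the substance is identical.
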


\begin{proof}
Suppose $X \cong X' \oplus X''$ for non-zero $X' = (\phi_1',\dots,\phi_d'),X'' = (\phi_1'',\dots,\phi_d'') \in \MatFac{S}{d}{f}$. Then, $\cok\phi_j \cong \cok\phi_j' \oplus \cok\phi_j''$. Since $\cok\phi_j$ is indecomposable, it follows that one of $\phi_j'$ or $\phi_j''$ is an isomorphism. That is, one of $X'$ or $X''$ is pseudoprojective, a contradiction. Hence, $X$ is indecomposable.
\end{proof}

Any partial converse to Lemma \ref{thm:pseudoproj_indecomp} would be very interesting but would likely need more conditions on $X$. The only result we have in this direction is a special case of Proposition \ref{thm:proj_summands} which says that the cokernels of an indecomposable non-projective matrix factorization have no free summands.

In the case that $f$ is an irreducible in $S$, we have another interesting consequence.

\begin{prop}
Assume $f \in S$ is irreducible. Then, any $X \in \MatFac{S}{d}{f}$ which is not pseudoprojective must be of size at least $d$.
\end{prop}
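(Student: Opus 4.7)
The plan is to reduce the statement to a unique-factorization argument via determinants. Write $X = (\phi_1,\dots,\phi_d)$ of size $n$, and assume $X \ne 0$ (a size-zero matrix factorization is vacuously not pseudoprojective, so the statement implicitly excludes it). Taking determinants in the identity $\phi_1\phi_2\cdots\phi_d = f\cdot 1_{F_1}$ yields
\[
\det(\phi_1)\det(\phi_2)\cdots\det(\phi_d) = f^n
\]
in $S$. The goal is to deduce $n \ge d$ from this equation together with the hypothesis that $X$ is not pseudoprojective.

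The first step is to rephrase ``not pseudoprojective'' in terms of determinants. By Lemma \ref{thm:shifts_of_X}, each $\phi_k$ is injective (since $f$ is a non-zerodivisor in $S$), so $\cok\phi_k = 0$ is equivalent to $\phi_k$ being an isomorphism of free $S$-modules of rank $n$; over the local ring $S$ this is in turn equivalent to $\det(\phi_k) \in S$ being a unit. Hence $X$ is not pseudoprojective precisely when every $\det(\phi_k)$ is a non-unit of $S$.

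The second step uses unique factorization. Since $S$ is a regular local ring it is a UFD, and since $f$ is irreducible by hypothesis it is prime. Each $\det(\phi_k)$ is a non-unit dividing $f^n$, so $f \mid \det(\phi_k)$ for every $k$. Writing $\det(\phi_k) = f^{a_k} u_k$ with $a_k \ge 1$ and $u_k$ coprime to $f$, comparing $f$-adic valuations in the determinant identity gives $n = a_1 + \cdots + a_d \ge d$. There is essentially no obstacle here: the whole proof amounts to one determinant computation combined with the UFD property of $S$. The one point that deserves explicit mention is the equivalence used in the first step, which is what lets the determinant encode the (non-)existence of cokernels.
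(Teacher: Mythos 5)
Your proof is correct and follows essentially the same line as the paper: take determinants of $\phi_1\cdots\phi_d = f\cdot 1$, use that $f$ is prime in the UFD $S$ to write each $\det\phi_k$ as a unit times a power $f^{r_k}$ with $\sum r_k = n$, and observe that each exponent is positive because $\cok\phi_k \ne 0$. The only cosmetic difference is that the paper detours through \cite[Proposition 5.6]{eisenbud_homological_1980} to interpret $r_k$ as $\rank_R(\cok\phi_k)$, whereas you directly use the equivalence ``$\det\phi_k$ a unit $\iff$ $\phi_k$ an isomorphism,'' which is the cleaner route.
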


\begin{proof}
Let $X = (\phi_1:F_2 \to F_1,\dots,\phi_d:F_1 \to F_d)$ and set $n$ to be the size of $X$. Since $\phi_1\phi_2\cdots\phi_d = f\cdot 1_{F_1}$, we have that
\[(\det\phi_1)(\det \phi_2)\cdots (\det \phi_d) = f^n.\] Since $f$ is irreducible, for each $k \in \Z_d$, there exists units $u_k \in S$ and integers $r_k \ge 0$ such that $\det\phi_k = u_k f^{r_k}$. Moreover, we have that $n = \sum_{k \in \Z_d}r_k$.

Let $k \in \Z_d$. Since $(\phi_k,\phi_{k+1}\phi_{k+2}\cdots\phi_{k-1}) \in \MatFac{S}{2}{f}$, \cite[Proposition 5.6]{eisenbud_homological_1980} implies that $\rank_R(\cok\phi_k) = r_k$, where $R=S/(f)$ as usual. The non-negative integer $r_k = 0$ if and only if $\phi_k$ is an isomorphism. However, since $X$ is not pseudoprojective, we have that $r_k > 0$. Thus, $n  = \sum_{k \in \Z_d}r_k \ge d$.
\end{proof}

In the case $d=2$, this proposition is simply a restatement of the definition of an irreducible element; if $(u,v) \in \MatFac{S}{2}{f}$ is of size 1, then one of $u$ or $v$ must be a unit in $S$.

\begin{example}
Let $\bf k$ be an algebraically closed field of characteristic 0 and set $S = \mathbf{k}\llbracket x ,y \rrbracket$. Let $\omega \in \mathbf{k}$ be a primitive $3$rd root of unity. Consider the simple curve singularity $R = S/(x^3 + y^4)$ of type $\mathbb{E}_6$. Following the notation of \cite[Chapter 9]{yoshino_cohen-macaulay_1990}, the indecomposable MCM $R$-module $B$ is given by the matrix factorization
\[(\beta, \alpha) = \left( \begin{pmatrix}
    y &0 &x\\
    x &-y^2 &0\\
    0 &x &-y
    \end{pmatrix}, \begin{pmatrix}
    y^3 &x^2 &xy^2\\
    xy &-y^2 &x^2\\
    x^2 &-xy &-y^3
    \end{pmatrix}\right) \in \MatFac{S}{2}{x^3+y^4}\] in the sense that $B = \cok\beta$. The matrix $\alpha$ factors further which gives rise to a matrix factorization of $x^3 + y^4$ with 3 factors:
    \[(\beta,\beta_2,\beta_3) \coloneqq \left(
    \begin{pmatrix}
    y &0 &x\\
    x &-y^2 &0\\
    0 &x &-y
    \end{pmatrix},
    \begin{pmatrix}
    -y^2 &0 &\omega x\\
    \omega x &-y &0\\
    0 &\omega x &y
    \end{pmatrix},
    \begin{pmatrix}
    -y &0 &\omega^2 x\\
    \omega^2 x &y &0\\
    0 &\omega^2 x &-y^2
    \end{pmatrix} 
    \right)\]

As we can see, the entries of $\beta,\beta_2,$ and $\beta_3$ lie in the maximal ideal of $S$, that is, $(\beta,\beta_2,\beta_3) \in \MatFac{S}{3}{x^3+y^4}$ is reduced. Combining Lemma \ref{thm:reduced_pseudoproj_stable}, Lemma \ref{thm:pseudoproj_indecomp}, and the fact that $B = \cok\beta$ is an indecomposable MCM $R$-module, we conclude that $(\beta,\beta_2,\beta_3)$ is indecomposable. We also note that, since $\det \beta = x^3 + y^4$, the matrix $\alpha$ is the adjoint of $\beta$. Hence, $\alpha = \beta_2\beta_3$ is a factorization of the adjoint.

The matrix factorization $(\beta,\beta_2,\beta_3)$ is essentially the one constructed in \cite[Proposition 2.1]{blaser_ulrich_2017}. Similar examples are also found using this construction. For instance,

\begin{enumerate}
    \item[$(\mathbb{E}_7)$] Let $f = x^3 + xy^3 \in S$.
    \[\left(
    \begin{pmatrix}
    y &0 &x\\
    -x &xy &0\\
    0 &-x &y
    \end{pmatrix},
    \begin{pmatrix}
    xy &0 &\omega x\\
    -\omega x &y &0\\
    0 &-\omega x &y
    \end{pmatrix},
    \begin{pmatrix}
    y &0 & \omega^2 x\\
    -\omega^2 x &y &0\\
    0 &-\omega^2 x &xy
    \end{pmatrix}\right) \in \MatFac{S}{3}{x^3+xy^3}\]
    
    \item[$(\mathbb{E}_8)$] Let $f =x^3 +y^5 \in S$.
    \[\left(
    \begin{pmatrix}
    y &-x &0\\
    0 &y &-x\\
    x &0 &y^3
    \end{pmatrix},
    \begin{pmatrix}
    y^3 &-\omega x &0\\
    0 &y &-\omega x\\
    \omega x &0 &y
    \end{pmatrix},
    \begin{pmatrix}
    y &-\omega^2 x &0\\
    0 &y^3 &-\omega^2 x\\
    \omega^2 x &0 &y
    \end{pmatrix}\right) \in \MatFac{S}{3}{x^3+y^5}\] and
    \[\left(
    \begin{pmatrix}
    y &-x &0\\
    0 &y^2 &-x\\
    x &0 &y^2
    \end{pmatrix},
    \begin{pmatrix}
    y^2 &-\omega x &0\\
    0 &y &-\omega x\\
    \omega x &0 &y^2
    \end{pmatrix},
    \begin{pmatrix}
    y^2 &-\omega^2 x &0\\
    0 &y^2 &-\omega^2 x\\
    \omega^2 x &0 &y
    \end{pmatrix}\right) \in \MatFac{S}{3}{x^3+y^5}.\]    
\end{enumerate}
\end{example}

\section*{Acknowledgements}
The author would like to thank his advisor, Graham J. Leuschke, for his dedication to this project and for his guidance and encouragement throughout its preparation.

\bibliographystyle{amsalpha}
\bibliography{BibTex2020}

\email{\textit{E-mail address:} ttribone@syr.edu}

\address{Department of Mathematics, Syracuse University, Syracuse, New York 13244, USA}

\end{document}